\numberwithin{equation}{section}
\renewcommand{\AA}{\mathbb A}
\newcommand{\CC}{\mathbb C}
\newcommand{\FF}{\mathbb F}
\newcommand{\GG}{\mathbb G}
\newcommand{\QQ}{\mathbb Q}
\newcommand{\RR}{\mathbb R}
\newcommand{\ZZ}{\mathbb Z}
\newcommand{\calA}{\mathcal A}
\newcommand{\calC}{\mathcal C}
\newcommand{\calG}{\mathcal G}
\newcommand{\calT}{\mathcal T}
\newcommand{\calS}{\mathcal S}
\newcommand{\calW}{\mathcal W}
\newcommand{\calV}{\mathcal V}
\newcommand{\scrS}{\mathscr S}
\def\T{\mathbf{T}}  
\def\G{\mathbf{G}}  
\def\cl{\operatorname{cl}} 
\def\ad{{\operatorname{ad}}}
\def\sc{{\operatorname{sc}}}
\def\Conj{\operatorname{Conj}}
\def\Spec{\operatorname{Spec}} 
 \def\Gal{\operatorname{Gal}}
\def\End{\operatorname{End}}
\def\cl{\operatorname{cl}}
\def\conn{{\operatorname{conn}}}
\def\der{{\operatorname{der}}}
\def\Spec{\operatorname{Spec}}
\def\tr{\operatorname{tr}}
\def\Gal{\operatorname{Gal}}
\def \GL{\mathbf{GL}} 
\def \SL {\mathbf{SL}} 
\def \Sp {\mathbf{Sp}} 
\def \SO {\mathbf{SO}}
\def \GSp {\mathbf{GSp}} 
\def\Aut{\operatorname{Aut}} 
\def\End{\operatorname{End}}
\def\Frob{\operatorname{Frob}}
\def\Res{\operatorname{Res}}
\def\tr{\operatorname{tr}}
 \def \Aut {\operatorname{Aut}}
\definecolor{purple}{rgb}{1,0,1}
\def\bbar#1{\setbox0=\hbox{$#1$}\dimen0=.2\ht0 \kern\dimen0 
\overline{\kern-\dimen0 #1}}
\newcommand{\Qbar}{{\overline{\mathbb Q}}} 
\newcommand{\Kbar}{\bbar{K}}
\newcommand{\kbar}{\bbar{k}} 
\newcommand{\FFbar}{\overline{\FF}}
\newcommand{\defi}[1]{\textsf{#1}}  
\newtheorem{thm}{Theorem}[section]
\newtheorem{lemma}[thm]{Lemma}
\newtheorem{cor}[thm]{Corollary}
\newtheorem{prop}[thm]{Proposition}
\theoremstyle{definition}
\newtheorem{definition}[thm]{Definition}
\newtheorem{conj}[thm]{Conjecture}
\theoremstyle{remark}
\newtheorem{remark}[thm]{Remark}
\newenvironment{romanenum}{\hfill \begin{enumerate} }{\end{enumerate}}
\newenvironment{alphenum}{\hfill \begin{enumerate} }{\end{enumerate}}
\definecolor{webcolor}{rgb}{0.8,0,0.2}
\definecolor{webbrown}{rgb}{.6,0,0}
\begin{document}
\title{The splitting of reductions of an abelian variety}
\subjclass[2000]{Primary 14K15; Secondary 11F80}

\author{David Zywina}
\address{Department of Mathematics and Statistics, Queen's University, Kingston, ON  K7L~3N6, Canada}
\email{zywina@mast.queensu.ca}
\urladdr{http://www.mast.queensu.ca/\~{}zywina}
\date{\today}

\maketitle

\begin{abstract} 
Consider an absolutely simple abelian variety $A$ defined over a number field $K$.   For most places $v$ of $K$, we study how the reduction $A_v$ of $A$ modulo $v$ splits up to isogeny.   Assuming the Mumford-Tate conjecture for $A$ and possibly increasing $K$, we will show that $A_v$ is isogenous to the $m$-th power of an absolutely simple abelian variety for all places $v$ of $K$ away from a set of density $0$, where $m$ is an integer depending only on the endomorphism ring $\End(A_{\Kbar})$.   This proves many cases, and supplies justification, for a conjecture of Murty and Patankar.    Under the same assumptions, we will also describe the Galois extension of $\QQ$ generated by the Weil numbers of $A_v$ for most $v$.\\
\end{abstract}

\section{Introduction}
Consider a non-zero abelian variety $A$ defined over a number field $K$.  Let $\Sigma_K$ be the set of finite places of $K$, and for each place $v\in\Sigma_K$ let $\FF_v$ be the corresponding residue field.   The abelian variety $A$ has good reduction at all but finitely many places $v\in\Sigma_K$.   For a place $v\in \Sigma_K$ for which $A$ has good reduction, the reduction $A$ modulo $v$ is an abelian variety $A_v$ defined over $\FF_v$.   We know that the abelian variety $A_v$ is isogenous to a  product of simple abelian varieties (all defined over $\FF_v$).     The goal of this paper is study how $A_v$ factors for ``almost all'' places $v$, i.e., for  those $v$ away from a subset of $\Sigma_K$ with (natural) density 0.  In particular, we will supply evidence for the following conjecture of V.~K.~Murty and V.~Patankar \cite{MR2426750}.

\begin{conj}[Murty-Patankar] \label{C:MP}
Let $A$ be an absolutely simple abelian variety over a number field $K$.  
Let $\calV$ be the set of finite places $v$ of $K$ for which $A$ has good reduction and  $A_v/\FF_v$ is simple.   Then, after possibly replacing $K$ by a finite extension, the density of $\calV$ exists and $\calV$ has density 1 if and only if $\End(A_{\Kbar})$ is commutative.
\end{conj}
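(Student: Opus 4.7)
\smallskip

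\noindent The plan is to derive Conjecture~\ref{C:MP} as a direct consequence of the main theorem of the paper announced in the abstract: under the Mumford--Tate conjecture, after possibly replacing $K$ by a finite extension, for every $v$ outside a subset of $\Sigma_K$ of density $0$, the reduction $A_v$ is isogenous to $B_v^m$ for some absolutely simple abelian variety $B_v/\FF_v$, where $m$ is an integer depending only on $\End(A_{\Kbar})$. Once this is granted, Conjecture~\ref{C:MP} reduces to the purely algebraic statement that $m = 1$ if and only if $\End(A_{\Kbar})$ is commutative.

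\smallskip

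To evaluate $m$ in terms of the endomorphism algebra, I would invoke Albert's classification: since $A$ is absolutely simple, $D := \End(A_{\Kbar}) \otimes \QQ$ is a division algebra over its center $F$, with $F$ totally real (Types I--III) or a CM field (Type IV). The integer $m$ should equal the reduced degree $\sqrt{[D:F]}$, readable off by combining Tate's theorem $\End(A_v) \otimes \QQ_\ell \cong \End_{\Fr_v}(V_\ell A_v)$ with the centralizer type of a regular Frobenius in the Mumford--Tate group of $A$. In particular $m = 1$ precisely when $D = F$, i.e.\ when $\End(A_{\Kbar})$ is commutative, while $m \geq 2$ in the quaternionic Types~II, III and in Type~IV with nontrivial division algebra.

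\smallskip

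Granting this determination of $m$, both directions of the conjecture follow immediately. If $\End(A_{\Kbar})$ is commutative, then $m = 1$ and $A_v$ is absolutely simple (hence simple over $\FF_v$) for a density-one subset of $\Sigma_K$, so $\calV$ has density $1$. If $\End(A_{\Kbar})$ is non-commutative, then $m \geq 2$, and on a density-one subset $A_v$ is isogenous to $B_v^m$ and therefore fails to be simple over $\FF_v$, so $\calV$ has density $0$; in either case the density of $\calV$ exists.

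\smallskip

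The genuine obstacle lies in the main theorem itself rather than in this deduction. The hard part will be proving that a single integer $m$ governs the decomposition for almost all $v$: one needs the Mumford--Tate conjecture to identify the Zariski closure of the image of the Galois representation on the Tate module with $\MT(A)$, a Chebotarev/Sato--Tate-style equidistribution of Frobenius conjugacy classes in $\MT(A)$, and a uniform centralizer computation showing that, away from a density-zero set of ``non-generic'' places, the isogeny decomposition of $A_v$ is dictated by the structure of $D$ via Tate's theorem.
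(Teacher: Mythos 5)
Your proposal takes essentially the same route as the paper: one reduces the conjecture, after replacing $K$ by the finite extension $K_A^\conn$, to Theorem~\ref{T:main}, using that $\End(A)\otimes_\ZZ\QQ=\End(A_{\Kbar})\otimes_\ZZ\QQ$ once $K=K_A^\conn$ (Proposition~\ref{P:Faltings}(iii)) and that a division algebra equals its center if and only if it is commutative, so $m=1$ precisely in the commutative case. One small refinement the paper makes that you could adopt: the density-zero direction (non-commutative endomorphisms, $m\geq 2$) already follows from the unconditional part (i) of Theorem~\ref{T:main}, so the Mumford--Tate conjecture is only needed for the density-one direction.
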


The conjecture in \cite{MR2426750} is stated without the condition that $K$ possibly needs to be replaced by a finite extension.   An extra condition is required since one can find counterexamples to the original conjecture.   (For example, let $A/\QQ$ be the Jacobian of the smooth projective curve defined by the equation $y^2=x^5-1$.   We have $\End(A_{\Qbar})\otimes_{\ZZ} \QQ=\QQ(\zeta_5)$, and in particular $A$ is absolutely simple.    For each prime $p\equiv -1\pmod{5}$, the abelian variety $A$ has good reduction at $p$ and $A_p$ is isogenous to $E_p^2$ where $E_p$ is an elliptic curve over $\FF_p$ that satisfies $|E_p(\FF_p)|=p+1$.  
Conjecture~1.1 will hold for $A$ with $K=\QQ(\zeta_5)$, equivalently, $A_p$ is simple for all primes $p\equiv 1 \pmod{5}$ away from a set of density 0.)

In our work, it will be necessary to first replace $K$ by a certain finite Galois extension $K_A^\conn$.   The field $K_A^\conn$ is the smallest extension of $K$ for which all the $\ell$-adic monodromy groups associated to $A$ over $K^{\conn}_A$ are connected, cf.~\S\ref{SS:monodromy groups}.   We quickly give an alternative description of this field from \cite{MR1339927}. For each prime number $\ell$, let $A[\ell^\infty]$ be the subgroup of $A(\Kbar)$ consisting of those points whose order is some power of $\ell$.   Let $K(A[\ell^\infty])$ be the smallest extension of $K$ in $\Kbar$ over which all the points of $A[\ell^\infty]$ are defined.   We then have 
\begin{equation*} \label{E:Kconn def LP}
K_A^\conn=\bigcap_\ell K(A[\ell^\infty]).
\end{equation*}

We shall relate Conjecture~\ref{C:MP} to the arithmetic of the Mumford-Tate group of $A$, see \S\ref{SS:MT group} for a definition of this group and \S\ref{SS:MT conj} for a statement of the Mumford-Tate conjecture for $A$.  

\begin{thm} \label{T:main}
Let $A$ be an absolutely simple abelian variety defined over a number field $K$ such that $K^{\conn}_A=K$.   Define the integer $m=[\End(A)\otimes_\ZZ\QQ:E]^{1/2}$ where $E$ is the center of the division algebra $\End(A)\otimes_\ZZ \QQ$.
\begin{romanenum}
\item
For all $v\in \Sigma_K$ away from a set of density 0,  $A_v$ is isogenous to $B^m$ for some abelian variety $B/\FF_v$.
\item
Suppose that the Mumford-Tate conjecture for $A$ holds.  Then for all $v\in \Sigma_K$ away from a set of density 0, $A_v$ is isogenous to $B^m$ for some absolutely simple abelian variety $B/\FF_v$.
\end{romanenum}
\end{thm}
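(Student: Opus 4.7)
\emph{Plan.}
Set $D := \End(A)\otimes_\ZZ\QQ$, a central simple $E$-algebra of reduced degree~$m$, so that for almost all primes $\ell$ we have $D\otimes_\QQ\QQ_\ell \cong \prod_{\lambda\mid\ell}M_m(E_\lambda)$. Fix any such~$\ell$. Morita equivalence expresses $V_\ell A$ as a $D\otimes\QQ_\ell$-module in the form $\bigoplus_\lambda E_\lambda^m \otimes_{E_\lambda} M_\lambda$. Since $\Frob_v$ commutes with the $D$-action on $V_\ell A$ for every $v\nmid\ell$ of good reduction, it must act on each summand as $1\otimes\phi_\lambda$, and hence
\[
V_\ell A \;\cong\; M^{\oplus m}\qquad\text{as }\QQ_\ell[\Frob_v]\text{-modules,}\qquad M := \bigoplus_\lambda M_\lambda.
\]
Tate's theorem then yields an algebra isomorphism $\End^\circ(A_v)\otimes\QQ_\ell \cong M_m(R_v)$ with $R_v := \End_{\QQ_\ell[\Frob_v]}(M)$: the $\ell$-adic endomorphism algebra is already a matrix algebra of size~$m$.

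For (i), I would decompose $A_v\sim\prod_i B_i^{f_i}$ with $B_i$ simple, giving $\End^\circ(A_v) = \prod_i M_{f_i}(D_i)$ where each $D_i$ is a division algebra over $\QQ(\pi_i)$. The displayed isomorphism forces every local factor $M_{f_i}(D_{i,\mu})$ (for $\mu\mid\ell$ a place of $\QQ(\pi_i)$) to be an $m$-fold matrix algebra. Letting $\ell$ range over the positive-density set of primes at which $D$ splits completely sweeps out every non-archimedean place of every $\QQ(\pi_i)$. An elementary density argument lets me discard as a density-zero set those $v$ whose reduction contains a simple factor with a real Weil number; outside this set the archimedean condition is automatic. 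Brauer local-global over $\QQ(\pi_i)$ then upgrades this to $M_{f_i}(D_i) = M_m(R_i)$ as central simple algebras, and Wedderburn's theorem forces $m\mid f_i$. Setting $B := \prod_i B_i^{f_i/m}$ gives $A_v\sim B^m$.

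For (ii), I would invoke the Mumford--Tate conjecture (together with $K_A^\conn = K$) to identify the $\ell$-adic monodromy group with the connected reductive group $\MT(A)\otimes\QQ_\ell$. Define the reduced Frobenius polynomial $R_v(T) := \det(T-\Frob_v\mid M)\in\QQ[T]$, so that $P_v = R_v^m$. Because $\MT(A)$ centralises $D$ and $A$ is absolutely simple, $\MT(A)$ acts on $M$ via a ``reduced standard representation'' whose weights form a single orbit under the combined action of the Weyl group of $\MT(A)$ and $\Gal(\overline{\QQ}/\QQ)$. Chebotarev equidistribution places $\Frob_v$ at a regular torus element for density-one many $v$, and descends this transitivity to transitivity of $\Gal(\overline{\QQ}/\QQ)$ on the roots of $R_v$; hence $R_v$ is $\QQ$-irreducible. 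Combined with (i), this forces $A_v \sim B^m$ for a single simple $B/\FF_v$. A further Chebotarev argument---ruling out, for density-one many $v$, that $B$ acquires additional simple factors upon base change to $\overline{\FF_v}$---promotes ``simple'' to ``absolutely simple''.

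The hardest step is the $\QQ$-irreducibility of $R_v$ for density-one $v$ in~(ii): it weaves together the Mumford--Tate conjecture, equidistribution of Frobenius conjugacy classes inside $\MT(A)$, and a Galois-theoretic analysis of the reduced representation on~$M$. The descent in~(i) is based on the standard Brauer local-global principle, but still requires attention to the archimedean places of $\QQ(\pi_i)$.
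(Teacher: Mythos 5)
Your plan for part (i) — Morita-decomposing $V_\ell A$ over $D\otimes\QQ_\ell$ to get $V_\ell A\cong M^{\oplus m}$ as a $\QQ_\ell[\Frob_v]$-module, hence $P_{A_v}=Q^m$ — is a perfectly sound and appealing substitute for the paper's weight-multiplicity computation (Lemma~\ref{L:Serre strong}(iv)); the two are essentially the same fact seen from different angles. The trouble starts at the claim that varying $\ell$ over the density-one set where $D$ splits ``sweeps out every non-archimedean place of every $\QQ(\pi_i)$.'' It does not: every such $\ell$ must satisfy $v\nmid\ell$, so the places of $\QQ(\pi_i)$ lying over the residue characteristic $p$ are never reached, and those are precisely the places where $\End^\circ(B_i)$ can ramify. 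Consequently the Brauer local-global step has no input at $p$, and you cannot conclude $M_{f_i}(D_i)\cong M_m(R_i)$ from the $\ell$-adic data alone. Note also that $m\mid f_i$ is \emph{equivalent} to the asserted isomorphism once $D_i$ is the division-algebra part, so the final ``Wedderburn forces $m\mid f_i$'' is circular until the Brauer class is actually pinned down. What closes the gap is the content of the paper's Lemma~\ref{L:Honda-Tate}: one restricts to $v$ for which $\FF_v$ has \emph{prime} cardinality (a density-one condition in $\calS_A$) and for which $\Phi_{A_v}$ is torsion-free (so $x^2-p\nmid P_{A_v}$, killing real places of $\QQ(\pi_i)$); then Waterhouse's theorem forces the $p$-adic invariants of $D_i$ to be integral and hence $D_i=\QQ(\pi_i)$, $e_i=1$. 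Your archimedean remark handles the real-place half of this, but the prime-field condition and the integrality of the $p$-adic invariants are missing, and they are not ``automatic.''

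For part (ii) the gap is sharper. ``Chebotarev equidistribution places $\Frob_v$ at a regular torus element \ldots\ and descends this transitivity to transitivity of $\Gal(\overline\QQ/\QQ)$ on the roots of $R_v$'' quietly assumes that $\Frob_v$ has a well-defined $\ell$-independent conjugacy class in $\G_A$ — this is Conjecture~\ref{C:Frob conj}, which is open. The paper substitutes Noot's theorem (Theorem~\ref{T:Noot}), which only gives a class in the coarser quotient $\Conj'(\G_A)$ and only after excluding Weil-number ratios that are roots of unity; the extra automorphisms in $\Gamma\supseteq W(\G_A,\T)$ then have to be tracked through the whole argument (Lemmas~\ref{L:Weyl class of psi}, \ref{L:in the Weyl group}, \ref{L:new Jordan}). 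Even granting a Frobenius class, ``Chebotarev equidistribution'' in $\G_A$ does not by itself give surjectivity of the resulting homomorphism $\psi_v\colon\Gal_L\to W(\G_A,\T)$ for density-one $v$: one must run a sieve over infinitely many $\ell$ (Propositions~\ref{P:independence density}, \ref{P:sieving}, using the local analysis of \S\ref{S:local Galois} and the equidistribution of $\theta_\G$-classes from Lemma~\ref{L:JKZ estimate}), and then apply a Jordan-type lemma to upgrade ``meets every conjugacy class'' to ``is the whole Weyl group.'' Your sketch replaces all of this with a single sentence, which is where the real work of the theorem lives. Finally, the passage from ``simple'' to ``absolutely simple'' does not need a further Chebotarev argument: once $P_B$ is irreducible and $\Phi_{A_v}$ is torsion-free, absolute simplicity follows directly (Lemma~\ref{L:reduce to poly}(iii)), because $\sigma(\pi_1^i)=\pi_2^i$ already forces $\sigma(\pi_1)=\pi_2$.
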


\begin{cor} \label{C:main}
Let $A$ be an absolutely simple abelian variety defined over a number field $K$ such that $K^{\conn}_A=K$.    Let $\calV$ be the set of finite places $v$ of $K$ for which $A$ has good reduction and $A_v/\FF_v$ is simple.  If $\End(A)$ is non-commutative, then $\calV$ has density 0.  If $\End(A)$ is commutative and the Mumford-Tate conjecture for $A$ holds, then $\calV$ has density 1.  
\end{cor}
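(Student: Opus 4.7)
The plan is to deduce Corollary~\ref{C:main} directly from Theorem~\ref{T:main} by unpacking what the integer $m$ tells us in the two cases of the dichotomy. Let $D=\End(A)\otimes_\ZZ\QQ$, a finite-dimensional division algebra with center $E$. By the standard structure theory of central simple algebras, $[D:E]$ is a perfect square, so the definition $m=[D:E]^{1/2}$ yields a positive integer. The elementary observation driving everything is that $m=1$ if and only if $D=E$, i.e., if and only if $\End(A)$ is commutative; otherwise $m\ge 2$.

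For the first assertion I would argue as follows. Assume $\End(A)$ is non-commutative, so that $m\ge 2$. By Theorem~\ref{T:main}(i), there is a subset $\calV'\subset \Sigma_K$ of density $0$ such that for every $v\notin \calV'$ (at which $A$ has good reduction), $A_v$ is isogenous to $B^m$ for some abelian variety $B/\FF_v$. Since $\dim A_v=\dim A>0$, the factor $B$ must be non-zero, and because $m\ge 2$, the product $B^m$ has at least two simple isogeny factors. Thus $A_v$ is not simple for any $v\notin \calV'$, so $\calV\subset \calV'$ and in particular $\calV$ has density $0$. Note that this half requires neither the Mumford-Tate conjecture nor anything beyond part~(i) of the theorem.

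For the second assertion, assume $\End(A)$ is commutative, so that $m=1$, and assume the Mumford-Tate conjecture holds. By Theorem~\ref{T:main}(ii), away from a set of density $0$, $A_v$ is isogenous to $B^1=B$ for some absolutely simple abelian variety $B/\FF_v$; absolutely simple implies simple, so $v\in\calV$ for all such $v$. Hence $\calV$ has density $1$.

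Thus the corollary is essentially a formal consequence of Theorem~\ref{T:main}, and no step here is substantive: all the work lies in Theorem~\ref{T:main} itself. The only point worth flagging is the identification of the two regimes $m=1$ and $m\ge 2$ with commutativity and non-commutativity of $\End(A)$, which is immediate from the central simple algebra structure of $D$.
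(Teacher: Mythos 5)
Your proof is correct and is precisely the deduction the paper intends (the paper treats Corollary~\ref{C:main} as an immediate consequence of Theorem~\ref{T:main} without spelling it out). Both directions are handled exactly as expected: the non-commutative case uses only part~(i) of the theorem (so is unconditional), via $m\geq 2$ forcing $B^m$ to be non-simple, and the commutative case uses part~(ii) with $m=1$ and the fact that an abelian variety isogenous to a simple one is itself simple.
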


We will observe later that $\End(A_{\Kbar})\otimes_\ZZ \QQ=\End(A)\otimes_\ZZ \QQ$ when $K_A^{\conn}=K$; so the above corollary shows that Conjecture~\ref{C:MP} is a consequence of the Mumford-Tate conjecture.  Using Theorem~\ref{T:main}, we will prove the following general version. 

%

\begin{thm} \label{T:general}
Let $A$ be a non-zero abelian variety defined over a number field $K$ such that $K_A^{\conn}=K$.  The abelian variety $A$ is isogenous to $A_1^{n_1}\times\cdots \times A_s^{n_s}$ where the abelian varieties $A_i/K$ are simple and pairwise non-isogenous.   For $1\leq i\leq s$, define the integer $m_i=[\End(A_i)\otimes_\ZZ \QQ:E_i]^{1/2}$ where $E_i$ is the center of $\End(A_i)\otimes_\ZZ \QQ$.

Suppose that the Mumford-Tate conjecture for $A$ holds.    Then for all places $v\in \Sigma_K$ away from a set of density 0, $A_v$ is isogenous to a product $\prod_{i=1}^s B_i^{m_i n_i}$ where the $B_i$ are absolutely simple abelian varieties over $\FF_v$ which are pairwise non-isogenous and satisfy $\dim(B_i)=\dim(A_i)/m_i$.
\end{thm}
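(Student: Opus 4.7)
The plan is to apply Theorem~\ref{T:main} to each simple isogeny factor $A_i$ individually, and then to prove that the resulting absolutely simple factors $B_i$ of the reductions are pairwise non-isogenous for almost all $v$.

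First, the hypotheses of Theorem~\ref{T:main} are inherited by each $A_i$. The decomposition $V_\ell(A)=\bigoplus_i V_\ell(A_i)^{n_i}$ exhibits the $\ell$-adic monodromy group of $A_i$ as a quotient of that of $A$, so connectedness of the latter forces $K^{\conn}_{A_i}=K$ for each $i$. Analogously, $\MT(A_i)$ is the image of $\MT(A)$ under the projection $\GL(H_1(A))\to \GL(H_1(A_i))$, and the corresponding compatibility for $\ell$-adic monodromy groups shows that the Mumford--Tate conjecture for $A$ implies the Mumford--Tate conjecture for each $A_i$. Applying Theorem~\ref{T:main}(ii) to each $A_i$ and intersecting the finitely many density-zero exceptional sets yields, for almost all $v$, isogenies $(A_i)_v\sim B_i^{m_i}$ with $B_i/\FF_v$ absolutely simple of dimension $\dim(A_i)/m_i$. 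Combining, $A_v\sim\prod_{i=1}^s B_i^{m_in_i}$ for almost all $v$.

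The essential new step is to show that for each pair $i\neq j$, the set of $v$ for which $B_i$ is isogenous to $B_j$ has density zero. Since $B_i$ and $B_j$ are absolutely simple, they are isogenous over $\FF_v$ precisely when their Weil $q_v$-numbers agree up to Galois conjugation, equivalently when the characteristic polynomials
\[
P_{i,v}(T):=\det\bigl(T-\Frob_v\mid V_\ell(A_i)\bigr),\qquad P_{j,v}(T):=\det\bigl(T-\Frob_v\mid V_\ell(A_j)\bigr)
\]
share a common root in $\Qbar$. So it suffices to show that the resultant $\Res(P_{i,v},P_{j,v})$ is non-zero for almost all $v$.

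I would complete the argument by invoking the Mumford--Tate conjecture together with the equidistribution of Frobenius conjugacy classes in $\MT(A)(\QQ_\ell)$, which is the same analytic engine underlying Theorem~\ref{T:main}. By Faltings' isogeny theorem, $V_\ell(A_i)$ and $V_\ell(A_j)$ are non-isomorphic as $\Gal_K$-modules, hence (under MT) as $\MT(A)_{\QQ_\ell}$-modules. The vanishing locus of the resultant is a closed, conjugation-invariant subscheme $Z_{ij}\subseteq \MT(A)$, and the set of $v$ with $\Frob_v\in Z_{ij}$ has density zero as soon as $Z_{ij}$ is a proper subvariety. The main technical obstacle is establishing precisely this properness: non-isomorphism of $\MT(A)$-representations alone does not force disjoint eigenvalue spectra for every semisimple element (shared non-highest weights can in principle create coincidences). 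Overcoming it should use the rigid structure of the Mumford--Tate group of an abelian variety --- in particular the weight-one Hodge structures on each $H^1(A_i)$ and the constraints imposed by the division algebras $\End(A_i)\otimes\QQ$ --- to rule out any systematic coincidence of eigenvalue sets between the non-isomorphic isotypic components of $V_\ell(A)$ under $\MT(A)$.
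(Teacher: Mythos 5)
Your reduction to the simple factors $A_i$ and the verification that $K_{A_i}^{\conn}=K$ and Mumford--Tate for $A_i$ are inherited is exactly the paper's Lemma~\ref{L:factors are OK}, and applying Theorem~\ref{T:main}(ii) factorwise is the right start. The gap is in the final step, and you have flagged it yourself without resolving it.

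You reduce ``$B_i\sim B_j$'' to ``$P_{i,v}$ and $P_{j,v}$ share a root,'' define $Z_{ij}$ as the vanishing locus of the resultant inside $\G_{A_i\times A_j,\ell}$, and then need $Z_{ij}$ to be a proper subvariety. You correctly observe that non-isomorphism of $V_\ell(A_i)$ and $V_\ell(A_j)$ as representations of $\G_{A_i\times A_j,\ell}$ does \emph{not} by itself force the weight sets $\Omega_i,\Omega_j\subseteq X(\T)$ to be disjoint (e.g.\ two non-isomorphic faithful representations of a torus can share a weight, in which case the resultant vanishes identically on $\T$ and hence on the whole group). So the argument stops exactly where the real content begins; ``use the rigid structure of the Mumford--Tate group'' is a hope, not a proof, and it is not obvious how to carry it out directly.

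The paper avoids the resultant entirely and replaces it with a \emph{linear} condition that Faltings can handle. If $B_{i,v}\sim B_{j,v}$, then $(A_i^{m_j})_v\sim B_{i,v}^{m_im_j}\sim B_{j,v}^{m_im_j}\sim(A_j^{m_i})_v$, hence the single trace identity $m_j\tr\rho_{A_i,\ell}(\Frob_v)=m_i\tr\rho_{A_j,\ell}(\Frob_v)$ holds. Let $W\subseteq\G_{A_i,\ell}\times\G_{A_j,\ell}$ be the hypersurface cut out by $m_j\tr(g)=m_i\tr(g')$. If $\G_{A_i\times A_j,\ell}\subseteq W$, then $\tr\circ\rho_{A_i^{m_j},\ell}=\tr\circ\rho_{A_j^{m_i},\ell}$ on all of $\Gal_K$, so by Faltings' isogeny theorem (semisimplicity plus the trace criterion) $A_i^{m_j}\sim A_j^{m_i}$ over $K$, forcing $A_i\sim A_j$ and contradicting the hypothesis. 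Hence $\G_{A_i\times A_j,\ell}\cap W$ is proper, and since $\G_{A_i\times A_j,\ell}$ is connected this intersection has positive codimension; Chebotarev then gives density zero. The lesson: comparing a single trace, rather than full eigenvalue spectra, is weaker pointwise but strong enough \emph{generically} because Faltings converts generic trace equality into an actual isogeny over $K$. Your resultant approach would require an additional structural input about disjointness of weight sets that the paper never needs.
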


Observe that the integer $s$ and the pairs $(n_i,m_i)$ from Theorem~\ref{T:general} can be determined from the endomorphism ring $\End(A)\otimes_\ZZ\QQ$.

\subsection{The Galois group of characteristic polynomials}
Let $A$ be a non-zero abelian variety over a number field $K$.   Fix a finite place $v$ of $K$ for which $A$ has good reduction.   Let $\pi_{A_v}$ be the Frobenius endomorphism of $A_v$ and let $P_{A_v}(x)$ be the characteristic polynomial of $\pi_{A_v}$.  The polynomial $P_{A_v}(x)$ is monic of degree $2\dim A$ with integral coefficients and can be characterized by the property that $P_{A_v}(n)$ is the degree of the isogeny $n-\pi_{A_v}$ of $A$ for each integer $n$.  

Let $\calW_{A_v}$ be the set of roots of $P_{A_v}(x)$ in $\Qbar$.  Honda-Tate theory says that $A_v$ is isogenous to a power of a simple abelian variety if and only if $P_{A_v}(x)$ is a power of an irreducible polynomial; equivalently, if and only if the action of $\Gal_\QQ$ on $\calW_{A_v}$ is transitive.   \\

 The following theorem will be important in the proof of Theorem~\ref{T:main}.    Let $\G_A$ be the Mumford-Tate group of $A$; it is a reductive group over $\QQ$ which we will recall in \S\ref{SS:MT group}.  Let $W(\G_A)$ be the Weyl group of $\G_A$.   We define the \defi{splitting field} $k_{\G_A}$ of $\G_A$ to be the intersection of all the subfields $L\subseteq \Qbar$ for which the group $\G_{A,L}$ is split.

\begin{thm} \label{T:Weil under MT}  
Let $A$ be an absolutely simple abelian variety over a number field $K$ that satisfies $K_A^{\conn}=K$.   Assume that the Mumford-Tate conjecture for $A$ holds and let $L$ be a finite extension of $k_{\G_A}$.  Then 
\[
\Gal(L(\calW_{A_v})/L)\cong W(\G_A).
\]
for all places $v\in \Sigma_K$ away from a set of density 0.
\end{thm}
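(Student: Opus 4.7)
The plan is to interpret $\Gal(L(\calW_{A_v})/L)$ as the image of a homomorphism $\rho_v\colon \Gal_L \to W(\G_A)$ depending on $v$, and then to establish surjectivity of $\rho_v$ for density-one many $v$ by combining the openness of the $\ell$-adic image with a Chebotarev/Hilbert-irreducibility argument. The crucial preliminary input is that, under the Mumford--Tate conjecture combined with $K_A^{\conn} = K$, the image of $\rho_\ell\colon \Gal_L \to \G_A(\QQ_\ell)$ is open in $\G_A(\QQ_\ell)$ (by Bogomolov's theorem on the $\ell$-adic Lie algebra of the image); in particular, $\Frob_v$ is regular semisimple in $\G_A(\QQ_\ell)$ for density-one many $v$.

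Fix a maximal torus $T_0 \subseteq \G_A$ split over $L$, possible because $L \supseteq k_{\G_A}$, and let $\Lambda \subset X^*(T_0)$ be the multiset of weights of the standard representation $V$ of $\G_A$. The Weyl group $W := W(\G_A, T_0)$ acts faithfully on $\Lambda$, since $V$ is a faithful representation of $\G_A$. For a place $v$ at which $\Frob_v := \rho_\ell(\Frob_v)$ is regular semisimple, its centralizer $T_v := Z_{\G_A}(\Frob_v)$ is a maximal torus of $\G_A$ over $\QQ_\ell$; picking $g \in \G_A(\Qbar_\ell)$ with $g T_v g^{-1} = T_0$, and noting that the Galois action on $W$ is trivial over $L$, one checks that $\sigma \mapsto \sigma(g) g^{-1} T_0 \in N_{\G_A}(T_0)/T_0 = W$ is a homomorphism $\rho_v\colon \Gal_L \to W$. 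A direct computation with $t := g \Frob_v g^{-1} \in T_0(\Qbar_\ell)$ shows $\sigma(\chi(t)) = (\rho_v(\sigma)^{-1}\chi)(t)$ for every $\chi \in \Lambda$, so the Galois action on $\calW_{A_v} = \{\chi(t) : \chi \in \Lambda\}$ factors through $\mathrm{image}(\rho_v)$; by faithfulness of the $W$-action on $\Lambda$, one obtains $\Gal(L(\calW_{A_v})/L) \cong \mathrm{image}(\rho_v) \subseteq W$.

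The main task, and the main obstacle, is to show $\rho_v$ is surjective for density-one many $v$. Via the Chevalley isomorphism $\G_A /\!/ \G_A \cong T_0/W$, the characteristic polynomial of $\Frob_v$ corresponds to a point $\pi_v \in (T_0/W)(L)$, and $\mathrm{image}(\rho_v)$ is the Galois group of the fiber above $\pi_v$ of the degree-$|W|$ étale Galois cover $T_0 \to T_0/W$. The $\pi \in (T_0/W)(L)$ for which this Galois group is a proper subgroup of $W$ form a Hilbertian thin set---a finite union, indexed by the maximal proper subgroups $H \subsetneq W$, of the images of the maps $(T_0/H)(L) \to (T_0/W)(L)$. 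The density-zero statement for this thin set is the technical heart of the argument: for each maximal $H$, the preimage in $\G_A(\QQ_\ell)$ of the corresponding lifting locus is contained in a positive-codimension constructible subset, hence has $\ell$-adic Haar measure zero in any open subgroup of $\G_A(\QQ_\ell)$, and combining this with the openness of $\rho_\ell(\Gal_L)$ and Chebotarev equidistribution of Frobenius conjugacy classes forces the density of such $v$ to be zero. Taking a union over the finitely many maximal proper subgroups completes the argument.
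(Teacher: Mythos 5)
Your high-level plan is in the spirit of the paper: construct a homomorphism to the Weyl group and prove it is surjective for density-one many $v$. But several essential ingredients are missing or incorrect.

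The central gap is the construction of the global homomorphism $\rho_v\colon \Gal_L \to W$. You set $\Frob_v := \rho_\ell(\Frob_v) \in \G_A(\QQ_\ell)$, take $T_v := Z_{\G_A}(\Frob_v)$ (a torus over $\QQ_\ell$), and choose $g \in \G_A(\Qbar_\ell)$ conjugating $T_v$ into $T_0$. But then $\sigma \mapsto \sigma(g)g^{-1}$ is only defined on a decomposition group $\Gal_{L_\lambda}$ for $\lambda \mid \ell$ -- the full group $\Gal_L$ does not act on $\Qbar_\ell$. What you get is exactly the \emph{local} homomorphism $\psi_{v,\ell}$ of \S\ref{S:local Galois}, not a global one. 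Relatedly, your assertion that the characteristic polynomial of $\Frob_v$ determines a point $\pi_v\in(T_0/W)(L)$ implicitly assumes that the conjugacy class $\cl_{\G_A}(\rho_{A,\ell}(\Frob_v))$ descends to $\Conj(\G_A)(\QQ)$; this is precisely Conjecture~\ref{C:Frob conj}, which is not available. The paper's way around both problems is Noot's theorem (Theorem~\ref{T:Noot}): it provides a canonically $\ell$-independent class $F_v'$ not in $\Conj(\G_A)(\QQ)$ but in the coarser $\Conj'(\G_A)(\QQ)$, and from a preimage $t_v \in \T(\Qbar)$ one builds the global $\psi_v$. The price is that the local/global comparison in Lemma~\ref{L:Weyl class of psi} holds only up to conjugacy by the larger group $\Gamma \supseteq W(\G_A,\T)$, and you must then replace the standard Jordan lemma by the ad hoc Lemma~\ref{L:new Jordan} that handles $\Gamma$-conjugacy classes (this is where the $\SO(2k)$ factors bite). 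Your proposal contains none of this.

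The surjectivity step is also not correct as stated. The locus of $\pi \in (T_0/W)(L)$ that lift to $(T_0/H)(L)$ for a maximal $H\subsetneq W$ is a Hilbertian thin set, but its preimage in $\G_A(\QQ_\ell)$ is \emph{not} a positive-codimension constructible subset, and it certainly does not have $\ell$-adic Haar measure zero: ``lifting over $L$'' is a global condition, not a Zariski-closed condition cut out over $\QQ_\ell$, and thin sets can be $\ell$-adically dense. A single auxiliary prime cannot give density zero. What the paper actually does (Proposition~\ref{P:sieving}) is sieve over an infinite family of auxiliary primes $\ell'$ that split completely in a splitting field of $\T$, using Lemma~\ref{L:sieving input} (with the reduction-mod-$\ell'$ equidistribution of Lemma~\ref{L:JKZ estimate} and the independence of Proposition~\ref{P:independence density}) to show that, outside density zero, $\psi_v(\Gal_L)$ meets every $\Gamma$-conjugacy class contained in $W(\G_A,\T)$, and then invokes Lemma~\ref{L:new Jordan}. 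That multiplicative sieve over infinitely many primes is what is missing from your argument.
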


Moreover, we expect the following:

\begin{conj} \label{C:main}
Let $A$ be a non-zero abelian variety over a number field $K$ that satisfies $K_A^{\conn}=K$.   There is a group $\Pi(\G_A)$ such that $\Gal(\QQ(\calW_{A_v})/\QQ)\cong \Pi(\G_A)$ for all $v\in \Sigma_K$ away from a set with natural density 0.
\end{conj}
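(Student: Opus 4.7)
The plan is to work under the Mumford-Tate conjecture for $A$ and to identify $\Pi(\G_A)$ explicitly as a subgroup of the semidirect product $W(\G_A)\rtimes \Gal(k_{\G_A}/\QQ)$. First I would fix a maximal torus $\T$ of $\G_{A,\Qbar}$ and take $V$ to be the faithful representation of $\G_A$ coming from the first rational homology of $A(\CC)$. The absolute Galois group $\Gal_\QQ$ acts on the character lattice of $\T$ up to conjugation by $N_{\G_A}(\T)/\T = W(\G_A)$, and this outer action factors through $\Gal(k_{\G_A}/\QQ)$ by definition of the splitting field. Combining the inner $W(\G_A)$-action with the outer $\Gal(k_{\G_A}/\QQ)$-action embeds $W(\G_A)\rtimes\Gal(k_{\G_A}/\QQ)$ into the permutation group of the multiset of weights of $\T$ on $V$. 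For any $v$ of good reduction, the Weil numbers $\calW_{A_v}$ arise as the values of these weights at an element of $\T(\Qbar)$ that is $\G_A(\Qbar)$-conjugate to $\pi_{A_v}$, and so $\Gal(\QQ(\calW_{A_v})/\QQ)$ embeds canonically into this semidirect product for every good place.

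Next I would combine Theorem~\ref{T:Weil under MT} with a Chebotarev argument over $k_{\G_A}$. Setting $L=k_{\G_A}$, that theorem gives $\Gal(L\cdot\QQ(\calW_{A_v})/L)\cong W(\G_A)$ for density-one $v$, which is exactly the statement that the image of $\Gal(\QQ(\calW_{A_v})/\QQ)$ in $W(\G_A)\rtimes\Gal(k_{\G_A}/\QQ)$ contains the full $W(\G_A)$ factor. What remains is to pin down the complementary image in $\Gal(k_{\G_A}/\QQ)$, which equals $\Gal\bigl((k_{\G_A}\cap \QQ(\calW_{A_v}))/\QQ\bigr)$. I would argue that for a density-one set of $v$ one has $k_{\G_A}\subseteq \QQ(\calW_{A_v})$: the splitting field is generated over $\QQ$ by quantities read off the characteristic polynomial of Frobenius acting on $V$ (for example the resolvents associated with the Galois action on the set of simple roots), and the Mumford-Tate conjecture guarantees, via Chebotarev applied to the $\ell$-adic image, that the Frobenius conjugacy classes equidistribute over the $\QQ$-points of $\G_A$, so these invariants actually appear in $P_{A_v}(x)$ for a positive density of $v$. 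Taking $\Pi(\G_A) := W(\G_A)\rtimes \Gal(k_{\G_A}/\QQ)$ then yields the conjecture for all such $v$.

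The hardest step will be the second one, showing that $k_{\G_A}\subseteq \QQ(\calW_{A_v})$ for density-one $v$. It is clear that $\QQ(\calW_{A_v})$ contains all the arithmetic invariants of the conjugacy class of $\pi_{A_v}$ in $\G_A(\Qbar)$, but translating this into containment of a specific number field attached to the root datum of $\G_A$ requires identifying concrete subfields of $\QQ(\calW_{A_v})$ that have a Galois-theoretic interpretation, for example by locating resolvent polynomials inside $P_{A_v}(x)$ whose splitting fields coincide with $k_{\G_A}$. A related subtlety is to verify that $\Pi(\G_A)$ so defined is an invariant of $\G_A$ alone, independent of the auxiliary choices of $\T$ and $V$; this reduces to the facts that different faithful representations of $\G_A$ give the same Weyl group and splitting field, and that the permutation representation on weights is well-defined up to the $W(\G_A)\rtimes\Gal(k_{\G_A}/\QQ)$-equivalence used above. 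Finally, to remove the hypothesis of the Mumford-Tate conjecture one would need an unconditional Chebotarev statement for the Frobenius conjugacy classes in $\G_A$, which at present is not available in general.
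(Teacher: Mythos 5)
Your proposal aims at the conditional result proved in the paper's Theorem~\ref{T:conj final case}, but it diverges from the paper's argument in a few substantive ways, and two of those divergences are genuine gaps.

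First, the group $\Pi(\G_A)$. The paper defines $\Pi(\G_A,\T)$ intrinsically (\S\ref{SS:Weyl group}) as the subgroup of $\Aut(X(\T))$ \emph{generated by} $W(\G_A,\T)$ and $\varphi_\T(\Gal_\QQ)$. Your $W(\G_A)\rtimes\Gal(k_{\G_A}/\QQ)$ is in general a different group. The homomorphism $\varphi_\T$ factors through $\Gal(k_\T/\QQ)$, where $k_\T$ is the splitting field of the chosen maximal torus $\T$, and $k_\T$ can be strictly larger than $k_{\G_A}$ (the group being split over $L$ only guarantees \emph{some} maximal torus split over $L$, not that a $\QQ$-rational $\T$ splits over $L$). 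Moreover $W(\G_A,\T)\cap \varphi_\T(\Gal_\QQ)$ need not be trivial, in which case the generated subgroup is a proper quotient of your external semidirect product. So even before the analytic argument, the proposed identification of $\Pi(\G_A)$ is not the one that makes the conjecture true in the paper's formulation, and it would need a separate argument to relate the two.

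Second, the construction of the embedding $\Gal(\QQ(\calW_{A_v})/\QQ)\hookrightarrow \Pi(\G_A)$. You assert that $\calW_{A_v}$ arises as the weight values at an element of $\T(\Qbar)$ that is $\G_A(\Qbar)$-conjugate to $\pi_{A_v}$. But $\pi_{A_v}$ only gives conjugacy classes in $\G_A(\QQ_\ell)$ for each $\ell$; the existence of a \emph{single} $\QQ$-rational conjugacy class $F_v\in\Conj(\G_A)(\QQ)$ (so that one may choose $t_v\in\T(\Qbar)$ with $\cl_{\G_A}(t_v)=F_v$) is precisely Conjecture~\ref{C:Frob conj}, which the paper must assume as an input to Theorem~\ref{T:conj final case}. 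Your proposal uses this silently, which hides a needed hypothesis; without it the homomorphism $\psi_v\colon\Gal_\QQ\to\Aut(X(\T))$ cannot be defined over $\QQ$.

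Third, the approach after this point is genuinely different and the paper's is both simpler and stronger where it matters. Your ``hardest step'' --- proving $k_{\G_A}\subseteq\QQ(\calW_{A_v})$ for density-one $v$ via a Chebotarev equidistribution over $\G_A(\QQ)$-points --- is not what the paper does and is not on firm footing as stated (equidistribution on $\QQ$-points of $\G_A$ is not a consequence of Chebotarev applied to $\rho_{A,\ell}$, and it is unclear how to read off $k_{\G_A}$ from resolvents of $P_{A_v}$). The paper instead observes (equation~(\ref{E:final psi formula})) that the composite $\bbar\psi_v\colon\Gal_\QQ\to\Pi(\G_A,\T)/W(\G_A,\T)$ \emph{equals} the reduction of $\varphi_\T$ modulo $W(\G_A,\T)$, hence is surjective for \emph{every} $v\in\calS$, with no density argument needed for that half. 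The density argument is needed only to fill in the $W(\G_A,\T)$ part of the image, which is Proposition~\ref{P:sieving}; that step you do correctly identify via Theorem~\ref{T:Weil under MT}. Finally, the paper's Theorem~\ref{T:conj final case} only handles $A$ absolutely simple (a hypothesis you do not address), and Remark~\ref{R:Noot good}(ii) explains a technical obstruction to removing it.
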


We shall later give an explicit candidate for the group $\Pi(\G_A)$; it has $W(\G_A)$ as a normal subgroup.  We will also prove the conjecture in several cases, cf.~\S\ref{S:conj ends}.

\subsection{Some previous results}
We briefly recall a few earlier known cases of Theorems~\ref{T:main} and \ref{T:Weil under MT}.

Let $A$ be an abelian variety over a number field $K$ such that $\End(A_{\Kbar})=\ZZ$ and such that $2\dim(A)$ is not a $k$-th power and not of the form $2k \choose k$ for every odd $k>1$.  Under these assumptions, Pink has shown that $\G_A$ is isomorphic to $\GSp_{2\dim(A),\QQ}$ and that the Mumford-Tate conjecture for $A$ holds \cite{MR1603865}*{Theorem~5.14}.   We will have $K_A^\conn=K$, so Theorem~\ref{T:main} says that $A_v/\FF_v$ is absolutely simple for all places $v\in \Sigma_K$ away from a set of density 0.  We have $k_{\G_A}=\QQ$ since $\G_A$ is split, so Theorem~\ref{T:Weil under MT} implies that $\Gal(\QQ(\calW_{A_v})/\QQ)$ is isomorphic to the Weyl group $W(\GSp_{2\dim(A),\QQ})=W(\Sp_{2\dim(A),\QQ})\cong W(C_{\dim(A)})$ for all $v\in\Sigma_K$ away from a set of density 0.   These results are due to Chavdarov \cite{MR1440067}*{Cor.~6.9} in the special case where $\dim(A)$ is 2, 6 or odd (these dimensions are used to cite a theorem of Serre which gives a mod $\ell$ version of Mumford-Tate).\\

Now consider the case where $A$ is an absolutely simple CM abelian variety defined over a number field $K$; so $F:=\End(A_{\Kbar})\otimes_\ZZ\QQ$ is a number field that satisfies $[F:\QQ]=2\dim(A)$.   After replacing $K$ by a finite extension, we may assume that $F=\End(A)\otimes_\ZZ \QQ$.   We have $\G_A\cong \Res_{F/\QQ}(\GG_{m,F})$, where $\Res_{F/\QQ}$ denotes restriction of scalars from $F$ to $\QQ$.  The theory of complex multiplication shows that $A$ satisfies the Mumford-Tate conjecture and hence Theorem~\ref{T:main} says that $A_v/\FF_v$ is absolutely simple for almost all places $v\in \Sigma_K$; this is also \cite{MR2426750}*{Theorem~3.1} where it is proved using $L$-functions and Hecke characters.   Theorem~\ref{T:Weil under MT} is not so interesting in this case since $W(\G_A)=1$.\\

Several cases of Theorem~\ref{T:main} were proved by J.~Achter in \cite{MR2496739} and \cite{Achter-effective}; for example, those abelian varieties $A/K$ such that $F:=\End(A_{\Kbar})\otimes_\ZZ \QQ$ is a totally real number field and $\dim(A)/[F:\QQ]$ is odd.   A key ingredient is known cases of the Mumford-Tate conjecture from the papers \cite{MR2400251}, \cite{MR2290584} and \cite{MR2663452}.  Achter's approach is very similar to this paper and boils down to showing that $P_{A_v}(x)$ is an appropriate power of an irreducible polynomial for almost all places $v\in \Sigma_K$. In the case where $\End(A_{\Kbar})$ is commutative, he uses the basic property that if $P_{A_v}(x) \bmod{\ell}$ is irreducible in $\FF_\ell[x]$, then $P_{A_v}(x)$ is irreducible in $\ZZ[x]$; unfortunately, this approach will not work for all absolutely simple abelian varieties $A/K$ for which $K_A^\conn=K$ and $\End(A_{\Kbar})$ is commutative (see \S\ref{SS:Mumford type} for an example).    Corollary~\ref{C:main} in the non-commutative case also follows from \cite{MR2496739}*{Theorem~B}.

\subsection{Overview}
We set some notation.  The symbol $\ell$ will always denote a rational prime.  If $X$ is a scheme over a ring $R$ and we have a ring homomorphism $R\to R'$, then we denote by $X_{R'}$ the scheme $X\times_{\Spec R} \Spec R'$ over $R'$.   The homomorphism is implicit in the notation; it will usually be a natural inclusion or quotient homomorphism; for example, $\QQ\to \QQ_\ell$, $\ZZ_\ell\to \QQ_\ell$, $\ZZ_\ell\to \FF_\ell$, $K\hookrightarrow \Kbar$.  For a field $K$, we will denote by $\Kbar$ a fixed algebraic closure and define the absolute Galois group $\Gal_K:=\Gal(\Kbar/K)$.\\

Let $A$ be a non-zero abelian variety over a number field $K$ that satisfies $K^{\conn}_A=K$.  Fix an embedding $K\subseteq \CC$.    In \S\ref{S:AV background}, we review the basics about the $\ell$-adic representations arising from the action of $\Gal_K$ on the $\ell$-power torsion points of $A$.  To each prime $\ell$, we will  associate an algebraic group $\G_{A,\ell}$ over $\QQ_\ell$.   Conjecturally, the connected components of the groups $\G_{A,\ell}$ are isomorphic to the base extension of a certain reductive group $\G_A$ defined over $\QQ$; this is the Mumford-Tate group of $A$ and comes with a faithful action on $H_1(A(\CC),\QQ)$.   In \S\ref{S:reductive}, we review some facts about reductive groups and in particular define the group $\Pi(\G_A)$ of Conjecture~\ref{C:main}.\\

Let us hint at how Theorem~\ref{T:main} and Theorem~\ref{T:Weil under MT} are connected; further details will be supplied later.  For the sake of simplicity, suppose that $\End(A_{\Kbar})=\ZZ$.    Fix a maximal torus $\T$ of $\G_A$ and a number field $L$ for which $\T_L$ is split.  Let $X(\T)$ be the group of characters of $\T_{\Qbar}$ and let $\Omega\subseteq X(\T)$ be the set of weights arising from the representation of $\T\subseteq \G_A$ on $H_1(A(\CC),\QQ)$.   The Weyl group $W(\G_A,\T)$ has a natural faithful action on the set $\Omega$.    

Using the geometry of $\G_A$ and our additional assumption $\End(A_{\Kbar})=\ZZ$, one can show that action of $W(\G_A,\T)$ on $\Omega$ is transitive.   Assuming the Mumford-Tate conjecture, we will show that for all $v\in \Sigma_K$ away from a set of density 0 there is a bijection $\calW_{A_v} \leftrightarrow \Omega$ such that the action of $\Gal_L$ on $\calW_{A_v}$ corresponds with the action of some subgroup of $W(\G_A,\T)$ on $\Omega$ (this will be described in \S\ref{SS:Galois action} and it makes vital use of a theorem of Noot described in \S\ref{S:Noot}).  So for almost all $v\in\Sigma_K$, we find that $\Gal(L(\calW_{A_v})/L)$ is isomorphic to a subgroup of $W(\G_A,\T)$.  If $\Gal(L(\calW_{A_v})/L)$ is isomorphic to $W(\G_A,\T)$, then we deduce that $\Gal_L$ acts transitively on $\calW_{A_v}$ and hence $P_{A_v}(x)$ is a power of an irreducible polynomial.   The assumption $\End(A_{\Kbar})=\ZZ$ also ensures that $P_{A_v}(x)$ is separable for almost all $v$, and thus we deduce that $P_{A_v}(x)$ is almost always irreducible (and hence $A_v$ is almost always simple).   To show that $\Gal(L(\calW_{A_v})/L)$ is maximal for all $v\in \Sigma_K$ away from a set of density 0, we will use a version of Jordan's lemma with some local information from \S\ref{S:local Galois}.\\

The proof of Theorem~\ref{T:general} can be found in \S\ref{S:general proof}; it easily reduced to the absolutely simple case.  In \S\ref{S:conj ends}  we discuss Conjecture~\ref{C:main} further and give an extended example. Finally, we will prove effective versions of Theorem~\ref{T:main} and Theorem~\ref{T:Weil under MT} in \S\ref{S:large sieve}.

\subsection*{Acknowledgements} 

Thanks to J.~Achter for rekindling the author's interest in the conjecture of Murty and Patankar.   Thanks to F.~Jouve and E.~Kowalski; many of the techniques and strategies used here were first worked out in the joint paper \cite{JKV-splitting}.

\section{Abelian varieties and Galois representations: background} \label{S:AV background}

Fix a non-zero abelian variety $A$ defined over a number field $K$.   In this section, we review some theory concerning the $\ell$-adic representations associated to $A$.  In particular, we will define the Mumford-Tate group of $A$ and state the Mumford-Tate conjecture.  For basics on abelian varieties see \cite{MR861974}.   The papers \cite{MR0476753} and \cite{MR1265537} supply overviews of several motivic conjectures for $A$ and how they conjecturally relate with its $\ell$-adic representations.

\subsection{Characteristic polynomials} \label{SS:Honda-Tate}

Fix a finite field $\FF_q$ with cardinality $q$.   Let $B$ be a non-zero abelian variety defined over $\FF_q$ and let $\pi_B$ be the Frobenius endomorphism of $B$.     The \defi{characteristic polynomial} of $B$ is the unique polynomial $P_{B}(x)\in \ZZ[x]$ for which the isogeny $n-\pi_{B}$ of $B$ has degree $P_{B}(n)$ for all integers $n$.  The polynomial $P_{B}(x)$ is monic of degree $2\dim B$ and its roots in $\CC$ have absolute value $q^{1/2}$.    

The following lemma says that, under some additional conditions, the factorization of $P_{B}(x)$ into irreducible polynomials is identical to the factorization of $B$ into simple abelian varieties.

\begin{lemma} \label{L:Honda-Tate}
Let $B$ be a non-zero abelian variety defined over $\FF_p$ where $p$ is a prime.   Assume that $P_{B}(x)$ is not divisible by $x^2-p$.   If $P_{B}(x)=\prod_{i=1}^s Q_i(x)^{m_i}$ where the $Q_i(x)$ are distinct monic irreducible polynomials in $\ZZ[x]$, then $B$ is isogenous to $\prod_{i=1}^s B_i^{m_i}$ where $B_i$ is a simple abelian variety over $\FF_p$ satisfying $P_{B_i}(x)=Q_i(x)$.
\end{lemma}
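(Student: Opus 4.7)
The plan is to reduce the statement to Honda--Tate theory via Poincar\'e complete reducibility. First I would write $B$ as isogenous to $\prod_{j=1}^t C_j^{e_j}$ for pairwise non-isogenous simple abelian varieties $C_j/\FF_p$. Because the characteristic polynomial is invariant under isogeny (through its description via the rational Tate module) and multiplicative under products, this gives
\begin{equation*}
P_B(x) = \prod_{j=1}^t P_{C_j}(x)^{e_j}.
\end{equation*}

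Next, for each simple factor $C_j$ with Frobenius endomorphism $\pi_j$, Honda--Tate theory gives $P_{C_j}(x) = h_j(x)^{r_j}$, where $h_j(x)\in\ZZ[x]$ is the monic irreducible minimal polynomial of $\pi_j$ over $\QQ$ and $r_j = [\End(C_j)\otimes_\ZZ\QQ:\QQ(\pi_j)]^{1/2}$. Moreover, two simple abelian varieties over $\FF_p$ are isogenous exactly when their Frobenius elements are conjugate algebraic numbers, so the polynomials $h_1,\dots,h_t$ are pairwise distinct in $\ZZ[x]$.

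The only step that is not formal is to show each $r_j=1$. Here I would invoke the classical computation of the Brauer invariants of the central division algebra $\End(C_j)\otimes_\ZZ\QQ$ over $\QQ(\pi_j)$: these invariants vanish outside the archimedean places and the places above $p$, and an explicit case analysis (essentially due to Tate and Waterhouse) shows that over $\FF_p$ the unique simple isogeny class with $r_j>1$ is the one attached to the real Weil number $\sqrt{p}$. That exceptional class is a two-dimensional simple abelian variety with characteristic polynomial $(x^2-p)^2$, so the hypothesis $x^2-p\nmid P_B(x)$ precludes any $C_j$ from lying in it, forcing $r_j=1$ for every $j$. This is the main obstacle of the proof.

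Once each $r_j=1$, we have $P_B(x) = \prod_{j=1}^t h_j(x)^{e_j}$, which is the factorization of $P_B(x)$ into powers of pairwise distinct monic irreducibles in $\ZZ[x]$. Comparing with the given factorization $P_B(x) = \prod_{i=1}^s Q_i(x)^{m_i}$ and invoking unique factorization in $\ZZ[x]$ gives $t=s$ and, after reindexing, $h_i=Q_i$ and $e_i=m_i$. Setting $B_i:=C_i$ produces simple abelian varieties with $P_{B_i}(x)=Q_i(x)$ and $B\sim\prod_{i=1}^s B_i^{m_i}$, as required; everything after the Brauer-invariant analysis is bookkeeping.
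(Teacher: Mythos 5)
Your proposal is correct and follows essentially the same path as the paper: decompose $B$ into simple isogeny factors, apply Honda--Tate to write $P_{C_j}(x)=h_j(x)^{r_j}$ with $r_j=[\End(C_j)\otimes\QQ:\QQ(\pi_j)]^{1/2}$, and show $r_j=1$ by exploiting that over the prime field $\FF_p$ the local Brauer invariants of $\End(C_j)\otimes\QQ$ above $p$ vanish, so the only obstruction comes from real places of $\QQ(\pi_j)$, which forces $\pi_j=\pm\sqrt p$ and is ruled out by the hypothesis $x^2-p\nmid P_B(x)$. The paper packages the Brauer-invariant computation as a citation to Waterhouse's Theorem~6.1 rather than redoing it, but the content and the way the hypothesis enters are identical to what you wrote.
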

\begin{proof}   This is a basic application of Honda-Tate theory; see \cite{MR0314847}.  We know that $B$ is isogenous to $\prod_{i=1}^s B_i^{n_i}$ where the $B_i/\FF_p$ are simple and pairwise non-isogenous.  We have $P_{B}(x)=\prod_{i=1}^s P_{B_i}(x)^{n_i}$.  Honda-Tate theory says that $P_{B_i}(x)=Q_i(x)^{e_i}$ where the $Q_i(x)$ are distinct irreducible monic polynomials in $\ZZ[x]$ and the $e_i$ are positive integers.   After possibly reordering the $B_i$, we have the factorization of $P_{B}(x)$ in the statement of the lemma with $m_i=n_i e_i$.  It thus suffices to show that $e_i=1$ for $1\leq i \leq s$.

Fix $1\leq i \leq s$.  Since $B_i$ is simple, the ring $E:=\End(B_i)\otimes_\ZZ \QQ$ is a division algebra with center $\Phi:=\QQ(\pi_{B_i})$.   By \cite{MR0314847}*{I.~Theorem~8}, we have  $e_i=[E:\Phi]^{1/2}$.   If the number field $\Phi$ has a real place, then $\pi_{B_i} = \pm p^{1/2}$ since $|\pi_{B_i}|= p^{1/2}$.     However, if $\pi_{B_i}=\pm p^{1/2}$, then $x^2- p$ divides $P_{B_i}(x)\in \QQ[x]$.   So by our assumption that $x^2- p$ does not divide $P_{B}(x)$, we conclude that $\Phi$ has no real places.  Using that $\QQ(\pi_{B_i})$ has no real places and $\FF_p$ has prime cardinality, \cite{MR0265369}*{Theorem~6.1} implies that $E$ is commutative and hence $e_i=[E:\Phi]^{1/2}=1$. 
\end{proof}

We define $\calW_{B}$ to be the set of roots of $P_{B}(x)$ in $\Qbar$.    The elements of $\calW_{B}$ are algebraic integers with absolute value $q^{1/2}$ under any embedding $\Qbar\hookrightarrow \CC$.    We define $\Phi_{B}$ to be the subgroup of $\Qbar^\times$ generated by $\calW_B$.

\subsection{Galois representations}
  For each positive integer $m$, let $A[m]$ be the $m$-torsion subgroup of $A(\Kbar)$; it is a free $\ZZ/m\ZZ$-module of rank $2\dim A$.  For a fixed rational prime $\ell$, let $T_\ell(A)$ be the inverse limit of the groups $A[\ell^e]$ where the transition maps are multiplication by $\ell$.   We call $T_\ell(A)$ the \defi{Tate module} of $A$ at $\ell$; it is a free $\ZZ_\ell$-module of rank $2\dim A$.   Define $V_\ell(A)= T_\ell(A) \otimes_{\ZZ_\ell} \QQ_\ell$.   There is a natural action of $\Gal_K$ on the groups $A[m]$, $T_\ell(A)$, and $V_\ell(A)$.  Let
\[
\rho_{A,\ell} \colon \Gal_K \to \Aut_{\QQ_\ell}(V_\ell(A)).
\]
be the Galois representation which describes the Galois action on the $\QQ_\ell$-vector space $V_\ell(A)$.   

Fix a finite place $v$ of $K$ for which $A$ has good reduction.  Denote by $A_v$ the abelian variety over $\FF_v$ obtained by reducing $A$ modulo $v$.   If $v\nmid \ell$, then $\rho_{A,\ell}$ is unramified at $v$ and satisfies
\[
P_{A_v}(x)=\det(xI-\rho_{A,\ell}(\Frob_v))
\]
where $P_{A_v}(x)\in \ZZ[x]$ is the degree $2\dim A$ monic polynomial of \S\ref{SS:Honda-Tate}.   Furthermore, $\rho_{A,\ell}(\Frob_v)$ is a semisimple element of $\Aut_{\QQ_\ell}(V_\ell(A))\cong \GL_{2\dim A}(\QQ_\ell)$.

\subsection{$\ell$-adic monodromy groups} \label{SS:monodromy groups}

Let $\GL_{V_\ell(A)}$ be the algebraic group defined over $\QQ_\ell$ for which $\GL_{V_\ell(A)}(L)=\Aut_L(L \otimes_{\QQ_\ell} V_\ell(A))$ for all field extensions $L/\QQ_\ell$.  The image of $\rho_{A,\ell}$ lies in $\GL_{V_\ell(A)}(\QQ_\ell)$.   Let $\G_{A,\ell}$ be the Zariski closure in $\GL_{V_\ell(A)}$ of $\rho_{A,\ell}(\Gal_K)$; it is an algebraic subgroup of $\GL_{V_\ell(A)}$ called the \defi{$\ell$-adic algebraic monodromy group} of $A$.  Denote by $\G_{A,\ell}^\circ$ the identity component of $\G_{A,\ell}$.     

Let $K^\conn_A$ be the fixed field in $\Kbar$ of $\rho_{A,\ell}^{-1}(\G_{A,\ell}^\circ(\QQ_\ell))$; it is a finite Galois extension of $K$ that does not depend on $\ell$, cf.~\cite{MR1730973}*{133~p.17}.   Thus for any finite extension $L$ of $K^\conn_A$ in $\Kbar$, the group $\rho_{A,\ell}(\Gal(\Kbar/L))$ is Zariski dense in $\G_{A,\ell}^\circ$ (equivalently, $\G_{A_L,\ell}=\G_{A,\ell}^\circ$).   We have $K^\conn_A=K$ if and only if all the $\ell$-adic monodromy groups $\G_{A,\ell}$ are connected.

\begin{prop}  \label{P:Faltings}
Assume that $K_A^\conn=K$.
\begin{romanenum}
\item The commutant of $\G_{A,\ell}$ in $\End_{\QQ_\ell}(V_\ell(A))$ is naturally isomorphic to $\End(A)\otimes_\ZZ \QQ_\ell$.   
\item
The group $\G_{A,\ell}$ is reductive.
\item 
We have $\End(A_{\Kbar})\otimes_\ZZ \QQ = \End(A)\otimes_\ZZ \QQ$.
\end{romanenum}
\end{prop}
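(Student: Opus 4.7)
The plan is to derive all three parts from Faltings's two deep theorems on abelian varieties over number fields: the semisimplicity of $V_\ell(A)$ as a $\Gal_K$-module, and the isomorphism
\[
\End_K(A)\otimes_\ZZ \QQ_\ell \xrightarrow{\;\sim\;} \End_{\Gal_K}(V_\ell(A))
\]
(the Tate conjecture for endomorphisms). Granted these inputs, the statements become essentially formal.

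First I would prove (i). Because $\rho_{A,\ell}(\Gal_K)$ is by definition Zariski dense in $\G_{A,\ell}$, the commutant of $\G_{A,\ell}$ inside $\End_{\QQ_\ell}(V_\ell(A))$ coincides with the commutant of the image of Galois. By Faltings's theorem, this commutant is naturally identified with $\End(A)\otimes_\ZZ \QQ_\ell$.

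For (ii), I would invoke Faltings's semisimplicity theorem: $V_\ell(A)$ is a semisimple $\Gal_K$-module. Since semisimplicity of a representation is preserved on passing from a subgroup to its Zariski closure in characteristic zero, $V_\ell(A)$ is a faithful semisimple representation of $\G_{A,\ell}$. Any linear algebraic group over a field of characteristic zero admitting a faithful semisimple representation is reductive, so $\G_{A,\ell}$ is reductive.

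For (iii), I would pick a finite extension $L$ of $K$ in $\Kbar$ large enough that every endomorphism of $A_{\Kbar}$ is defined over $L$, so that $\End(A_L) = \End(A_{\Kbar})$. The hypothesis $K_A^\conn = K$ forces $\G_{A,\ell}$ to be connected, and then the general identity $\G_{A_L,\ell} = \G_{A,\ell}^\circ$ recalled in \S\ref{SS:monodromy groups} gives $\G_{A_L,\ell} = \G_{A,\ell}$. In particular $L_{A_L}^\conn = L$, so part (i) applies to $A_L$ as well and the two commutants agree:
\[
\End(A)\otimes_\ZZ \QQ_\ell \;=\; \End(A_L)\otimes_\ZZ \QQ_\ell.
\]
The inclusion $\End(A)\hookrightarrow \End(A_L)$ is an inclusion of free $\ZZ$-modules of finite rank with equal $\QQ_\ell$-dimension; hence it has finite index, which forces equality after tensoring with $\QQ$, yielding $\End(A)\otimes_\ZZ \QQ = \End(A_{\Kbar})\otimes_\ZZ \QQ$.

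The real content of all three parts is carried by Faltings's theorems; the only obstacle beyond quoting them is the bookkeeping in (iii), namely verifying that the connectedness hypothesis transfers $\G_{A,\ell}$ unchanged to $\G_{A_L,\ell}$ so that the commutant computation may be applied on both sides of the inclusion $K\subseteq L$.
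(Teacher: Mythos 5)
Your argument is correct and matches the paper's proof essentially line for line: both derive (i) from Faltings's Tate-conjecture theorem plus Zariski density, (ii) from Faltings's semisimplicity theorem via the standard fact that a faithful semisimple representation in characteristic zero forces reductivity, and (iii) by passing to an $L$ with $\End(A_L)=\End(A_{\Kbar})$, using $\G_{A_L,\ell}=\G_{A,\ell}$ (from $K_A^\conn=K$) to equate commutants, and then comparing dimensions. The only difference is cosmetic --- you spell out the finite-index step in (iii) that the paper compresses into ``by comparing dimensions.''
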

\begin{proof}
Faltings proved that the representation $\rho_{A,\ell}$ is semisimple and that the natural homomorphism
\[
\End(A)\otimes_\ZZ \QQ_\ell \to \End_{\QQ_\ell[\Gal_K]}(V_\ell(A))
\]
is an isomorphism, cf.~\cite{MR861971}*{Theorems 3--4}.   So the commutant of $\G_{A,\ell}$ in $\End_{\QQ_\ell}(V_\ell(A))$ equals $\End(A)\otimes_\ZZ \QQ_\ell$. It follows easily that $\G_{A,\ell}$ is reductive.

Let $L$ be a finite extension of $K$ for which $\End(A_{\Kbar})=\End(A_L)$.    Since $\G_{A_L,\ell}=\G_{A,\ell}$, we obtain an isomorphism between their commutants; $\End(A_L)\otimes_\ZZ \QQ_\ell = \End(A)\otimes_\ZZ \QQ_\ell$.    By comparing dimensions, we find that the injective map $\End(A)\otimes_\ZZ \QQ \to \End(A_L)\otimes_\ZZ \QQ=\End(A_{\Kbar})\otimes_\ZZ \QQ$ is an isomorphism.
\end{proof}

The following result of Bogomolov \cite{MR574307} says that the image of $\rho_{A,\ell}$ in $\G_{A,\ell}$ is large.  

\begin{prop} \label{P:Bogomolov}
The group $\rho_{A,\ell}(\Gal_K)$ is an open subgroup of $\G_{A,\ell}(\QQ_\ell)$ with respect to the $\ell$-adic topology.
\end{prop}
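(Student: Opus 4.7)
The plan is to prove that $H := \rho_{A,\ell}(\Gal_K)$ is open in $\G_{A,\ell}(\QQ_\ell)$ by identifying the Lie algebra of $H$ with that of $\G_{A,\ell}$. First, $H$ is compact as the continuous image of the profinite group $\Gal_K$, so it is a closed $\ell$-adic Lie subgroup of $\GL_{V_\ell(A)}(\QQ_\ell)$, with a well-defined Lie algebra $\mathfrak{h} \subseteq \mathfrak{g} := \Lie(\G_{A,\ell})$. It suffices to show $\mathfrak{h} = \mathfrak{g}$, for then $H$ contains an open neighborhood of the identity in $\G_{A,\ell}^\circ(\QQ_\ell)$, and Zariski density forces $H$ to surject onto the finite component group $\G_{A,\ell}/\G_{A,\ell}^\circ$, yielding openness in $\G_{A,\ell}(\QQ_\ell)$.

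As a formal preliminary, I would show that $\mathfrak{h}$ is an ideal of $\mathfrak{g}$. The subspace $\mathfrak{h}$ is $\Ad(H)$-stable by general Lie theory, and since $\Ad\colon \G_{A,\ell}\to \GL(\mathfrak{g})$ is algebraic, Zariski density of $H$ extends this to $\Ad(\G_{A,\ell})$-stability; differentiating at the identity yields $[\mathfrak{g},\mathfrak{h}]\subseteq \mathfrak{h}$. By reductivity of $\G_{A,\ell}$ (Proposition~\ref{P:Faltings}(ii)), I would then decompose $\mathfrak{g}=\mathfrak{z}\oplus [\mathfrak{g},\mathfrak{g}]$ into center and semisimple part; the ideal $\mathfrak{h}$ splits accordingly as $(\mathfrak{h}\cap\mathfrak{z}) \oplus (\mathfrak{h}\cap[\mathfrak{g},\mathfrak{g}])$.

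The semisimple piece is controlled formally: $\mathfrak{h}\cap [\mathfrak{g},\mathfrak{g}]$ is an ideal of a semisimple Lie algebra, hence a sum of simple factors, and each such factor is the Lie algebra of a closed connected normal algebraic subgroup of $\G_{A,\ell}^\circ$. If any simple factor were missing, then a finite-index open subgroup of $H$ would lie in a proper closed subgroup of $\G_{A,\ell}$, contradicting Zariski density once the central contribution is also accounted for. For the central piece $\mathfrak{h}\cap \mathfrak{z}$, I would invoke the arithmetic structure of $\rho_{A,\ell}$: up to a finite-order character, its determinant is a power of the $\ell$-adic cyclotomic character, whose image in $\ZZ_\ell^\times$ is open, and more generally the Hodge--Tate decomposition at places of $K$ above $\ell$ supplies enough rational cocharacters in the image to force $\mathfrak{h}\cap\mathfrak{z}$ to span $\mathfrak{z}$.

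The main obstacle is precisely the central contribution. Unlike the semisimple case, formal Zariski density is insufficient here: a compact Zariski-dense subgroup of a $\QQ_\ell$-torus need not be open, since an ``irrational'' one-parameter subgroup is Zariski dense but of strictly smaller dimension than its Zariski closure. Ruling out this pathology for the image of $H$ in the central torus of $\G_{A,\ell}^\circ$ requires genuine arithmetic input from $\rho_{A,\ell}$ beyond abstract Lie-theoretic formalism, and this is the technical heart of Bogomolov's argument.
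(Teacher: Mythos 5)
The paper does not prove this proposition; it is quoted as a result of Bogomolov with a citation to \cite{MR574307}, so there is no in-text argument of the author's to compare against. Your sketch nevertheless gives a faithful outline of the structure of Bogomolov's theorem, and the purely formal steps are correct: compactness of $H=\rho_{A,\ell}(\Gal_K)$ gives a well-defined Lie algebra $\mathfrak{h}\subseteq\mathfrak{g}$; $\Ad(H)$-stability of $\mathfrak{h}$ plus Zariski density and algebraicity of $\Ad$ show that $\mathfrak{h}$ is an ideal; reductivity (Proposition~\ref{P:Faltings}) gives $\mathfrak{g}=\mathfrak{z}\oplus[\mathfrak{g},\mathfrak{g}]$, and an ideal splits compatibly; and if $\mathfrak{h}\cap[\mathfrak{g},\mathfrak{g}]$ omitted a simple factor, then a finite-index open subgroup of $H$ would sit inside a proper normal algebraic subgroup of $\G_{A,\ell}^\circ$, contradicting Zariski density. (Your hedge ``once the central contribution is also accounted for'' in that step is unnecessary: the proper normal subgroup already contains all of $\mathfrak{z}$, so the contradiction is direct.)

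The gap, which you explicitly flag, is at the central torus, and that gap is precisely the content of Bogomolov's theorem. It does not suffice to observe that $\det\rho_{A,\ell}$ is, up to finite order, a power of the cyclotomic character: $\mathfrak{z}$ can have dimension greater than one, and in the CM case one has $\mathfrak{g}=\mathfrak{z}$, so your argument would then reduce the \emph{entire} theorem to the step you leave open. The genuine input is the Hodge--Tate property of $V_\ell(A)$ at a place of $K$ above $\ell$: by Sen's theory, the Lie algebra $\mathfrak{h}$ contains the Sen operator, a semisimple element with eigenvalues the Hodge--Tate weights of $A$ (each of multiplicity $\dim A$), hence a rational cocharacter, and one then argues that the algebraic subalgebra of $\mathfrak{g}$ generated by such elements together with the ideal structure forces $\mathfrak{h}=\mathfrak{g}$. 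Without carrying this out, the central step remains a gesture toward the right tool rather than a proof. Since you correctly diagnose exactly where formal Zariski density fails on a torus and what arithmetic is needed, this is an honest outline rather than an erroneous one; but it is incomplete, and for the purposes of this paper the right move is simply to cite \cite{MR574307} as the author does.
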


Using that $\rho_{A,\ell}(\Gal_K)$ is an open and compact subgroup of $\G_{A,\ell}(\QQ_\ell)$, we find that the algebraic group $\G_{A,\ell}$ describes the image of $\rho_{A,\ell}$ up to commensurability.  

\begin{prop} \label{P:Phi rank}
Assume that $K_A^\conn=K$.
\begin{romanenum}
\item The rank of the reductive group $\G_{A,\ell}$ does not depend on $\ell$.
\item Let $r$ be the common rank of the groups $\G_{A,\ell}$.  Then the set of $v\in\Sigma_K$ for which $\Phi_{A_v}$ is a free abelian of rank $r$ has density 1.
\end{romanenum}
\end{prop}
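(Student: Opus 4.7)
The idea is to identify the structure of $\Phi_{A_v}$ with that of a ``Frobenius torus'' inside $\G_{A,\ell}$, and then to use equidistribution of Frobenii (Chebotarev applied inside the open image provided by Bogomolov's theorem). Fix a prime $\ell$ and set $r_\ell:=\rank(\G_{A,\ell})$. For each place $v\nmid \ell$ at which $A$ has good reduction, let $Z_v\subseteq \GL_{V_\ell(A)}$ be the Zariski closure of the cyclic subgroup generated by $\rho_{A,\ell}(\Frob_v)$. Since $\rho_{A,\ell}(\Frob_v)$ is semisimple, $Z_v$ is of multiplicative type, and evaluation at $\rho_{A,\ell}(\Frob_v)$ induces an isomorphism $X(Z_v)\cong \Phi_{A_v}$: its image in $\Qbar^\times$ is precisely the subgroup generated by the eigenvalues of $\rho_{A,\ell}(\Frob_v)$. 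Consequently, $\Phi_{A_v}$ is free abelian of rank $r$ if and only if $Z_v$ is a connected torus of dimension $r$, and since $Z_v\subseteq \G_{A,\ell}$ this forces $r\leq r_\ell$, with equality requiring $Z_v$ to be a maximal torus of $\G_{A,\ell}$.

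The substantive step is to show that $Z_v$ is a connected maximal torus of $\G_{A,\ell}$ for $v$ in a set of density $1$. After conjugating $\rho_{A,\ell}(\Frob_v)$ into a fixed maximal torus $T\subseteq \G_{A,\ell}$, the condition fails exactly when some non-zero character $\chi\in X(T)$ satisfies $\chi(\rho_{A,\ell}(\Frob_v))\in\mu$, where $\mu$ denotes the group of roots of unity; thus the ``bad'' locus in $T(\QQ_\ell)$, and also in $\G_{A,\ell}(\QQ_\ell)$ after spreading out, is a countable union of proper closed subvarieties, hence has Haar measure zero. By Proposition~\ref{P:Bogomolov}, $\rho_{A,\ell}(\Gal_K)$ is an open compact subgroup of $\G_{A,\ell}(\QQ_\ell)$, and Chebotarev's density theorem gives equidistribution of Frobenii in this image with respect to Haar measure. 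A standard outer-regularity argument---approximate the measure-zero bad set from outside by open continuity sets of arbitrarily small Haar measure, each of which carries a set of Frobenii of matching density by the single-subvariety version of Chebotarev---then converts the Haar-measure-zero conclusion to density $0$ for the set of bad places.

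Part (i) follows formally: for each $\ell$, a density-one set of $v$ satisfies $\rank(\Phi_{A_v}) = r_\ell$; since $\Phi_{A_v}$ does not depend on $\ell$, any two such sets (for distinct primes) intersect, forcing $r_\ell$ to be independent of $\ell$. The chief obstacle is the measure-to-density conversion for a countable union of proper subvarieties rather than a single one, since the indicator of a countable union need not be a continuity function; handling this requires combining Bogomolov's openness with a careful equidistribution argument and the outer-regularity of Haar measure on $\rho_{A,\ell}(\Gal_K)$.
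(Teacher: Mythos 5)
Your reduction of the statement to the assertion that the ``Frobenius torus'' $Z_v = \overline{\langle \rho_{A,\ell}(\Frob_v)\rangle}^{\mathrm{Zar}}$ is a maximal torus of $\G_{A,\ell}$ for a density-one set of $v$, and the observation that (i) then follows because $\Phi_{A_v}$ is $\ell$-independent, are exactly what the paper does. The difference is that the paper establishes the Frobenius-torus claim by citing Larsen--Pink, \emph{A connectedness criterion for $\ell$-adic Galois representations} (Israel J.\ Math.\ 97 (1997)), Theorem~1.2, whereas you attempt a direct measure-theoretic proof. That attempt has a genuine gap.

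The bad locus is a \emph{countable} union of proper closed subvarieties $\{g : \chi(g)^n = 1\}$, so inside $\rho_{A,\ell}(\Gal_K)$ it is a countable union of closed sets of Haar measure zero. Each individual such closed set does produce a set of places of density zero (this is Serre's $\ell$-adic Chebotarev for a single proper subvariety, the result cited as \cite{MR644559} later in the paper). But a countable union of density-zero sets need not have density zero, and your proposed fix via outer regularity does not close this. Given the measure-zero bad set $B$, outer regularity supplies an open $U \supseteq B$ with $\mu(U) < \varepsilon$; however, for an open set Chebotarev-type equidistribution controls only the \emph{lower} density of $\{v : \rho_{A,\ell}(\Frob_v)\in U\}$ (it is $\geq \mu(U)$, by exhausting $U$ by clopen sets), and gives no upper bound. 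To get an upper bound one would need $U$ to be a continuity set, i.e.\ $\mu(\partial U)=0$, or clopen, and neither can be arranged in general while keeping $B\subseteq U$. So ``Haar measure zero plus outer regularity'' does not convert to ``density zero'' for this $F_\sigma$ set, and the argument as written does not prove the claim.

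The missing ingredient, which is what Larsen--Pink (and Serre before them) actually use, is arithmetic: the eigenvalues of $\rho_{A,\ell}(\Frob_v)$ are Weil numbers, so they generate a number field of degree $\leq (2\dim A)!$ over $\QQ$, and they have all archimedean absolute values equal to $N(v)^{1/2}$. The degree bound forces any root of unity $\chi(\rho_{A,\ell}(\Frob_v))$ to have order bounded solely in terms of $\dim A$, and the archimedean condition constrains which characters $\chi$ can vanish on a Frobenius semisimple element. Together these reduce the a priori countable union to an effectively finite one, at which point the single-subvariety Chebotarev argument you invoke does finish. Without this input, the proposal does not establish the key density-one statement, and this is precisely why the paper appeals to Larsen--Pink rather than arguing directly.
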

\begin{proof}
Fix a prime $\ell$.  For a place $v$ where $A$ has good reduction, let $\T_v$ be the Zariski closure in $\G_{A,\ell}$ of a fixed (semisimple) element in the conjugacy class $\rho_{A,\ell}(\Frob_v)$.    The set of places $v$ for which $\T_v$ is a maximal torus of $\G_{A,\ell}$ has density 1; this follows from \cite{MR1441234}*{Theorem~1.2}.   Observe that $\T_v$ is a maximal torus of $\G_{A,\ell}$ if and only if $\Phi_{A_v}$ (the multiplicative group generated by the eigenvalues of $\rho_{A,\ell}(\Frob_v)$) is a free abelian group whose rank equals the reductive rank of $\G_{A,\ell}$.      Parts (i) and (ii) follow since the rank of $\Phi_{A_v}$ does not depend on $\ell$.
\end{proof}

\subsection{The set $\calS_A$} \label{SS:calSA}

We define $\calS_A$ to be the set of places $v\in \Sigma_K$ that satisfy the following conditions:
\begin{itemize}
\item $A$ has good reduction at $v$,
\item $\FF_v$ has prime cardinality,
\item $\Phi_{A_v}$ is a free abelian group whose rank equals the common rank of the groups $\G_{A,\ell}$.
\end{itemize}
Using Proposition~\ref{P:Phi rank}(ii), one finds that $\calS_A$ has density 1 if $K_A^\conn=K$.  Since we are willing to exclude a set of places with density 0 from our main theorems, it will suffice to restrict our attention to the places $v\in\calS_A$.

\subsection{The Mumford-Tate group} \label{SS:MT group}
Fix a field embedding $K_A^\conn \subseteq \CC$.  The homology group $V=H_1(A(\CC),\QQ)$ is a vector space of dimension $2\dim A$ over $\QQ$.   It is naturally endowed with a $\QQ$-Hodge structure of type $\{(-1,0),(0,-1)\}$, and hence a decomposition 
\[
V\otimes_\QQ\CC = H_1(A(\CC),\CC)=V^{-1,0} \oplus V^{0,-1}
\]
such that $V^{0,-1}=\bbar{V^{-1,0}}$.  Let
\[
\mu\colon \GG_{m,\CC} \to \GL_{V\otimes_\QQ\CC}  
\]
be the cocharacter such that $\mu(z)$ is the automorphism of $V\otimes_\QQ\CC$ which is multiplication by $z$ on $V^{-1,0}$ and the identity on $V^{0,-1}$ for each $z\in\CC^\times=\GG_{m}(\CC)$.

\begin{definition}
The \defi{Mumford-Tate group} of $A$ is the smallest algebraic subgroup of $\GL_V$, defined over $\QQ$, which contains $\mu(\GG_{m,\CC})$.  We shall denote it by $\G_{A}$.
\end{definition}

The endomorphism ring $\End(A(\CC))$ acts on $V$; this action preserves the Hodge decomposition, and hence commutes with $\mu$ and thus also $\G_A$.  Moreover, the ring $\End(A(\CC))\otimes_\ZZ \QQ$ is natural isomorphic to the commutant of $\G_A$ in $\End_\QQ(V)$.   The group $\G_A/\QQ$ is reductive since the $\QQ$-Hodge structure for $V$ is pure and polarizable.  Using our fixed embedding $K_A^\conn\subseteq \CC$ and Proposition~\ref{P:Faltings}(ii), we have a natural isomorphism $\End(A(\CC))\otimes_\ZZ \QQ = \End(A_{K_A^\conn})\otimes_\ZZ \QQ$.

\subsection{The Mumford-Tate conjecture}  \label{SS:MT conj}

The \defi{comparison isomorphism} $V_\ell(A)\cong V \otimes_\QQ \QQ_\ell$ induces an isomorphism $\GL_{V_\ell(A)} \cong \GL_{V,\,\QQ_\ell}$.   The following conjecture says that $\G_{A,\ell}^\circ$ and $\G_{A,\QQ_\ell}$ are the same algebraic group when we use the comparison isomorphism as an identification, cf.~\cite{MR0476753}*{\S3}.

\begin{conj}[Mumford-Tate conjecture]   \label{C:MT}
For each prime $\ell$, we have $\G_{A,\ell}^\circ= \G_{A,\QQ_\ell}$.
\end{conj}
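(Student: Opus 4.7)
The Mumford-Tate conjecture is a celebrated open problem; I cannot realistically plan to settle it in full, but the Tannakian strategy one would follow is clear. I would separate the two inclusions. The inclusion $\G_{A,\ell}^\circ\subseteq \G_{A,\QQ_\ell}$ is essentially Deligne's theorem that every Hodge cycle on an abelian variety is absolutely Hodge: any $\G_A$-stable line in a tensor construction on $V$ transports, via the comparison isomorphism, to a $\Gal_K$-stable line on the $\ell$-adic side after passing to a finite extension of $K$ (which leaves $\G_{A,\ell}^\circ$ unchanged), and Chevalley's theorem on stabilisers of lines in tensor representations then gives the inclusion.

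For the reverse inclusion $\G_{A,\QQ_\ell}\subseteq \G_{A,\ell}^\circ$, which is the hard direction and is equivalent to a form of the Tate conjecture for self-products of $A$, I would proceed in three stages. First, equality of ranks: the common rank of the $\G_{A,\ell}^\circ$ from Proposition~\ref{P:Phi rank} is already bounded above by $\rank \G_A$ by the first inclusion, and one produces a matching lower bound by building a Frobenius torus of the right dimension via Chebotarev density and a suitable semisimple lift, in the spirit of Serre's ``Frobenius tori'' construction. Second, match the derived subgroups: use that the standard representation of $\G_A$ on $V$ is polarised and, in the classical cases, has minuscule weights, to force the $\ell$-adic derived group to be no smaller. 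Third, pin down the centre via the endomorphism algebra, using Proposition~\ref{P:Faltings} together with Faltings' isogeny theorem and the fact that $\End(A)\otimes\QQ_\ell$ is the commutant of $\G_{A,\ell}$.

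The main obstacle is the second stage. In the ``generic'' case $\End(A_{\Kbar})=\ZZ$ with $2\dim A$ avoiding a small list of exceptional integers, Serre and Pink exploit the representation theory of minuscule weights to pin the derived group down, giving the known cases cited in the introduction; further endomorphism types have been treated in the same spirit by Chi, Vasiu, and others. The genuinely obstructed case is that of abelian varieties of Mumford type, where $\G_A^\der$ admits an exceptional factor and no current technique rules out that the $\ell$-adic derived group projects to a strictly smaller subgroup in the exceptional factor; a general proof would presumably require a new geometric input beyond Faltings' semisimplicity theorem and Deligne's absolute Hodge cycles. For the purposes of the present paper it is therefore reasonable to take the conjecture as a hypothesis, as is done in Theorems~\ref{T:main}(ii), \ref{T:general}, and \ref{T:Weil under MT}.
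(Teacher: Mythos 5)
You are right that there is nothing to prove here: Conjecture~\ref{C:MT} is stated as a conjecture precisely because it is open, and the paper offers no proof of it. What the paper does record unconditionally is the one known inclusion $\G_{A,\ell}^\circ\subseteq\G_{A,\QQ_\ell}$ (Proposition~\ref{P:MT inclusion}, attributed to Deligne via the theory of absolute Hodge cycles), together with the Larsen--Pink reduction that the full conjecture is equivalent to equality of ranks. Your proposal correctly identifies this structure — the easy inclusion, the reduction to ranks via Frobenius tori, the role of the derived subgroup and centre, and the fact that the genuinely hard part (the reverse inclusion, tied to the Tate conjecture for self-products) is unresolved in general, with Mumford-type varieties being a known stress case. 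Your final sentence, that for the purposes of this paper the conjecture is taken as a hypothesis in Theorems~\ref{T:main}(ii), \ref{T:general}, and \ref{T:Weil under MT}, is exactly how the paper uses it. So the proposal is not a proof (none exists), but it is an accurate and appropriately calibrated account of the status of the statement and of the strategy one would follow, consistent with the paper's own discussion.
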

 
The Mumford-Tate conjecture is still open, however significant progress has been made in showing that several general classes of abelian varieties satisfy the conjecture; we simply refer to \cite{MR2400251}*{\S1.4} for a partial list of references.  The Mumford-Tate conjecture for $A$ holds if and only if the common rank of the groups $\G_{A,\ell}^\circ$ equals the rank of $\G_{A}$ \cite{MR1339927}*{Theorem~4.3}; in particular, the conjecture holds for one prime $\ell$ if and only if it holds for all $\ell$.  One inclusion of the Mumford-Tate conjecture is know to hold unconditionally, see Deligne's proof in \cite{MR654325}*{I, Prop.~6.2}.

\begin{prop}\label{P:MT inclusion}
For each prime $\ell$, we have $\G_{A,\ell}^\circ \subseteq  \G_{A,\QQ_\ell}$.
\end{prop}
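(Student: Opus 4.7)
The plan is to follow Deligne's approach via absolute Hodge cycles.

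First, I would recast $\G_A$ Tannakianly. A standard application of Tannakian duality to the category of $\QQ$-Hodge structures generated by $V$ identifies $\G_A$ with the subgroup of $\GL_V$ that fixes every \emph{Hodge tensor} --- i.e., every class of type $(0,0)$ in a tensor construction $T^{a,b}(V) := V^{\otimes a} \otimes (V^*)^{\otimes b}$ equipped with its induced Hodge structure. (One incorporates Tate twists in the usual way so that the scalar factor arising from $\mu$ is accounted for.)

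Next, I would transport these Hodge tensors to the $\ell$-adic side. The comparison isomorphism $V_\ell(A) \cong V \otimes_\QQ \QQ_\ell$ extends naturally to each $T^{a,b}$, sending a Hodge tensor $t$ to an $\ell$-adic tensor $t_\ell \in T^{a,b}(V_\ell(A))$. The crucial input at this point is Deligne's theorem that every Hodge class on an abelian variety is absolute Hodge (\cite{MR654325}, Chapter I): as a consequence, there is a finite extension $L/K$ such that $\Gal_L$ fixes $t_\ell$ for every Hodge tensor $t$.

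Finally, I would enlarge $L$ to contain $K_A^\conn$. Then $\rho_{A,\ell}(\Gal_L)$ is Zariski dense in $\G_{A,\ell}^\circ$ (by the defining property of $K_A^\conn$), and by the previous step it lies in the joint stabilizer of the tensors $\{t_\ell\}$ inside $\GL_{V_\ell(A)}(\QQ_\ell)$. Under the comparison isomorphism, this joint stabilizer is precisely $\G_{A,\QQ_\ell}(\QQ_\ell)$ by the first step. Taking Zariski closures gives $\G_{A,\ell}^\circ \subseteq \G_{A,\QQ_\ell}$, as desired.

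The main obstacle is clearly Deligne's absolute Hodge theorem invoked in the second step; it is a deep result and constitutes the bulk of the entire first chapter of \cite{MR654325}. The Tannakian description of $\G_A$ and the density step are formal once that ingredient is in hand.
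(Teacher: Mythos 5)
Your proof is correct and is precisely the argument the paper invokes by citing Deligne \cite{MR654325}*{I, Prop.~6.2}; the paper gives no independent proof of this inclusion. (One small point worth making explicit: to find a single finite $L$ working simultaneously, reduce to finitely many Hodge tensors via Chevalley's theorem that $\G_A$ is the stabilizer of finitely many of them, or equivalently argue tensor-by-tensor and use that a finite-index open subgroup of $\rho_{A,\ell}(\Gal_K)$ has the same Zariski closure up to connected component.)
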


Using this proposition, we obtain a well-defined Galois representation $\rho_{A,\ell} \colon \Gal_{K^\conn_A} \to  \G_{A}(\QQ_\ell)$ for each prime $\ell$.

\subsection{Frobenius conjugacy classes} \label{SS:Frobenius conjugacy classes}

Let $R$ be the affine coordinate ring of $\G_A$.   The group $\G_A$ acts on $R$ by composition with inner automorphisms.  We define $R^{\G_A}$ to be the $\QQ$-subalgebra of $R$ consisting of those elements fixed by this $\G_A$-action; it is the algebra of \defi{central functions} on $\G_A$.  Define $\Conj(\G_A):=\Spec(R^{\G_A})$; it is a variety over $\QQ$ which we call the \defi{variety of (semi-simple) conjugacy classes} of $\G_A$.  We define $\cl_{\G_A} \colon \G_A \to \Conj(\G_A)$ to be the morphism arising from the inclusion $R^{\G_A} \hookrightarrow R$ of $\QQ$-algebras.  

Let $L$ be an algebraically closed extension of $\QQ$.   Each $g\in \G_A(L)$ can be expressed uniquely in the form $g_s g_u$ where $g_s$ is semisimple, $g_u$ is unipotent, and $g_s$ and $g_u$ commute.    For $g,h\in \G_A(L)$, we have $g_s=h_s$ if and only if $\cl_{\G_A}(g)=\cl_{\G_A}(h)$.\\

Assume that $K_A^\conn=K$.  We can then view $\rho_{A,\ell}$ as having image in $\G_A(\QQ_\ell)$.     The following conjecture says that the conjugacy class of $\G_A$ containing $\rho_{A,\ell}(\Frob_v)$ does not depend on $\ell$; see \cite{MR1265537}*{C.3.3} for a more refined version.

\begin{conj}   \label{C:Frob conj}   Suppose that $K_A^\conn=K$.   Let $v$ be a finite place of $K$ for which $A$ has good reduction. Then there exists an $F_v \in \Conj(\G_A)(\QQ)$ such that $\cl_{\G_A}(\rho_{A,\ell}(\Frob_v))=F_v$ for all primes $\ell$ satisfying $v\nmid \ell$.
\end{conj}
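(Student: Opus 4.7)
The plan is to reduce the statement to the rationality and $\ell$-independence of Frobenius traces on all representations of $\G_A$, and then appeal to Deligne's theorem on absolute Hodge cycles. By Proposition~\ref{P:MT inclusion}, $\rho_{A,\ell}(\Frob_v) \in \G_A(\QQ_\ell)$, so $\cl_{\G_A}(\rho_{A,\ell}(\Frob_v))$ lies a priori in $\Conj(\G_A)(\QQ_\ell)$. Semisimple conjugacy classes in the reductive group $\G_A$ are separated by the algebra $R^{\G_A}$, and $R^{\G_A}$ is $\QQ$-spanned by the characters of $\QQ$-rational representations of $\G_A$. So it suffices to show: for every representation $r\colon \G_A \to \GL_W$ defined over $\QQ$, the trace $\tr\bigl(r(\rho_{A,\ell}(\Frob_v))\bigr)$ lies in $\QQ$ and does not depend on $\ell$.

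For the tensor representations $V^{\otimes a}\otimes V^{*\otimes b}$ this is classical: the trace is a polynomial expression in the Frobenius eigenvalues on $V_\ell(A)$, which are the roots $\calW_{A_v}$ of $P_{A_v}(x)\in \ZZ[x]$, hence algebraic integers independent of $\ell$ by Weil's theorem. Since $V$ is a faithful representation of the reductive group $\G_A$, every irreducible representation of $\G_A$ over $\QQ$ appears as a $\G_A$-subquotient of some $V^{\otimes a}\otimes V^{*\otimes b}$, so it remains to handle such subquotients.

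The main obstacle is showing rationality and $\ell$-independence for an arbitrary $\G_A$-subrepresentation $W \subseteq V^{\otimes a} \otimes V^{*\otimes b}$. By reductivity of $\G_A$, there is a $\G_A$-equivariant projector $p_W \in \End_\QQ(V^{\otimes a}\otimes V^{*\otimes b})$ with image $W$, defined over $\QQ$; being $\G_A$-invariant, it is a Hodge cycle in the relevant tensor construction of the rational Hodge structure $V = H_1(A(\CC),\QQ)$. Invoking Deligne's theorem that every Hodge cycle on an abelian variety is absolute Hodge, we obtain a Galois-invariant $\ell$-adic realization $p_W \otimes 1$ for each $\ell$, and moreover the motive cut out by $p_W$ in the Tannakian subcategory generated by $h^1(A)$ has $\ell$-adic realizations forming a compatible system of Galois representations. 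One reduces this compatibility to Weil's theorem on the cohomology of the powers $A^n$ via the Kunneth formula, with Deligne's theorem providing the bridge between the Hodge-theoretic projector $p_W$ and its $\ell$-adic realizations. Consequently the Frobenius trace on $W \otimes \QQ_\ell$ lies in $\QQ$ and is $\ell$-independent, which yields the common value $F_v \in \Conj(\G_A)(\QQ)$. The hardest point is thus extracting this compatibility from absolute Hodge-ness of $p_W$: one needs more than Galois invariance of the projector, namely that the characteristic polynomial of $\Frob_v$ on $\mathrm{Im}(p_W\otimes 1)$ is determined by the $\QQ$-structure of $W$ and the Weil numbers in $\calW_{A_v}$, in a manner independent of $\ell$.
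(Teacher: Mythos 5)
The statement you were given is Conjecture~\ref{C:Frob conj}: the paper presents it as an \emph{open conjecture}, not a theorem, and supplies no proof. It only cites Serre \cite{MR1265537} for the conjectural framework and then quotes, as Theorem~\ref{T:Noot}, a genuine but strictly weaker result of Noot which produces a class in $\Conj'(\G_A)(\QQ)$ rather than in $\Conj(\G_A)(\QQ)$.

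Your reduction is correct as far as it goes: semisimple conjugacy classes in the reductive group $\G_A$ are separated by characters of $\QQ$-representations, every such representation is a subquotient of some $V^{\otimes a}\otimes V^{*\otimes b}$, and the $\G_A$-equivariant projector $p_W$ onto such a subquotient is a Hodge tensor, hence (by Deligne) an absolute Hodge tensor. But the step you flag at the end as ``the hardest point'' is not a detail to be filled in --- it is exactly the open content of the conjecture, and the paragraph invoking ``Weil via K\"unneth with Deligne providing the bridge'' does not cross it. Deligne's theorem gives that $p_W\otimes 1$ is Galois-invariant and behaves compatibly under automorphisms of $\CC$; it does \emph{not} give that $\tr\bigl(p_W\circ\rho_{A,\ell}(\Frob_v)\bigr)$ is rational and $\ell$-independent. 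Weil's theorem pins down the Frobenius characteristic polynomial on the full space $V_\ell^{\otimes a}\otimes V_\ell^{*\otimes b}$, but the trace on the summand $W\otimes\QQ_\ell$ depends on how the projector $p_W$ meets the Frobenius eigenspace decomposition, and this refinement is not controlled by the conjugacy class of $\Frob_v$ in $\GL_V$. To control it one needs $p_W$ to be an algebraic cycle specializing well modulo $v$ (Tate conjecture, or some of the standard conjectures) so that a Lefschetz-type argument applies. This is precisely why Noot's theorem, the best available unconditional substitute, only recovers the image in $\Conj'(\G_A)(\QQ)$, i.e.\ modulo the extra automorphism group $\calA$ coming from $\mathbf{O}(2k)$-conjugation on the $\SO(2k)$ factors of $(\G_A^{\der})_{\Qbar}$: the character-theoretic obstruction to separating conjugacy classes is sharpest there. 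In short, your argument correctly reformulates the conjecture but then asserts rather than proves the $\ell$-independence of Frobenius traces on cut-out motives; this is a genuine gap, and it is the reason the statement remains a conjecture in the paper.
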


\begin{remark}
The algebra of class functions of $\GL_V$ is $\QQ[a_1,\ldots, a_n]$ where the $a_i$ are the morphisms of $\GL_V$ that satisfy $\det(x I - g)=x^n+a_1(g)x^{n-1} + \ldots + a_{n-1}(g)x + a_n(g) 
$ for $g\in \GL_V(\QQ)$.   The inclusion $\G_A \subseteq \GL_V$ induces a morphism $f\colon \Conj(\G_A) \to \Conj(\GL_V):= \Spec \QQ[a_1,\ldots, a_n]\cong \AA_\QQ^n$.  Let $v$ be a finite place of $K$ for which $A$ has good reduction.  Conjecture~\ref{C:Frob conj} implies that for any prime $\ell$ satisfying $v\nmid\ell$, $f(\cl_{\G_A}(\rho_{A,\ell}(\Frob_v)))=f(F_v)$ belongs to $\Conj(\GL_V)(\QQ)$ and is independent of $\ell$; this consequence is true, and is just another way of saying that $\det(xI-\rho_{A,\ell}(\Frob_v))$ has coefficients in $\QQ$ and is independent of $\ell$.  
\end{remark}

In \S\ref{S:Noot}, we will state a theorem of Noot which gives a weakened version of Conjecture~\ref{C:Frob conj}.

\subsection{Image module $\ell$} \label{SS:image mod ell}
Let $\GL_{T_\ell(A)}$ be the group scheme over $\ZZ_\ell$ for which $\GL_{T_\ell(A)}(R)=\Aut_R(R \otimes_{\ZZ_\ell}T_\ell(A))$ for all (commutative) $\ZZ_\ell$-algebras $R$.   Note that the generic fiber of $\GL_{T_\ell(A)}$ is $\GL_{V_\ell(A)}$ and the image of $\rho_{A,\ell}$ lies in $\GL_{T_\ell(A)}(\ZZ_\ell)$.  Let $\calG_{A,\ell}$ be the Zariski closure of $\rho_{A,\ell}(\Gal_K)$ in $\GL_{T_\ell(A)}$; it is a group scheme over $\ZZ_\ell$ with generic fiber $\G_{A,\ell}$.   

Let $\bbar{\rho}_{A,\ell}\colon \Gal_K \to \Aut_{\ZZ/\ell\ZZ}(A[\ell])$ be the representation describing the Galois action on the $\ell$-torsion points of $A$.   Observe that $\bbar\rho_{A,\ell}(\Gal_K)$ is naturally a subgroup of $\calG_{A,\ell}(\FF_\ell)$; the following results show that these groups are almost equal.

\begin{prop} \label{P:image of Galois mod ell}
Suppose that $K_A^\conn=K$ and that $A$ is absolutely simple.
\begin{romanenum}
\item 
For $\ell$ sufficiently large, $\calG_{A,\ell}$ is a reductive group over $\ZZ_\ell$.
\item
There is a constant $C$ such that the inequality $[\calG_{A,\ell}(\FF_\ell): \bbar\rho_{A,\ell}(\Gal_K) ] \leq C$ holds for all primes $\ell$. 
\item 
For $\ell$ sufficiently large, the group $\bbar\rho_{A,\ell}(\Gal_K)$ contains the commutator subgroup of $\calG_{A,\ell}(\FF_\ell)$.
\end{romanenum}
\end{prop}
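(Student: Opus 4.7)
The plan is to treat the three parts in sequence, each building on the previous one. For part (i), the generic fiber of $\calG_{A,\ell}$ is reductive by Proposition~\ref{P:Faltings}(ii), so one needs to show that the special fiber is reductive for $\ell$ large. I would invoke a theorem of Wintenberger (built on Fontaine--Laffaille theory) applied to the N\'eron model of $A$ at places above $\ell$: for $\ell$ unramified in $K/\QQ$ and larger than a bound depending only on $A$, the Tate module $T_\ell(A)$ is crystalline at each $w\mid \ell$ with Hodge--Tate weights in $\{0,1\}$, and the Fontaine--Laffaille functor forces the Zariski closure $\calG_{A,\ell}$ to be smooth with connected reductive special fiber. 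The absolute simplicity of $A$ (giving that $\End(A)\otimes\QQ$ is a division algebra) is used here together with Proposition~\ref{P:Faltings}(i) to pin down the commutant integrally and rule out non-reduced structure in the special fiber.

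For part (ii), by Bogomolov's theorem (Proposition~\ref{P:Bogomolov}) combined with Faltings (Proposition~\ref{P:Faltings}(i)), the $\QQ_\ell$-Lie algebra $\mathfrak{h}_\ell$ of $H_\ell := \rho_{A,\ell}(\Gal_K)$ coincides with $\Lie(\calG_{A,\ell})(\QQ_\ell)$ for every $\ell$; the content of (ii) is the \emph{uniformity} in $\ell$ of the index. Assuming (i) and $\ell$ large, the $\ell$-adic logarithm identifies the pro-$\ell$ principal congruence subgroup $\ker(\calG_{A,\ell}(\ZZ_\ell)\to \calG_{A,\ell}(\FF_\ell))$ with a $\ZZ_\ell$-lattice in $\Lie(\calG_{A,\ell})$, and the image of $H_\ell$ there is a $\ZZ_\ell$-Lie subalgebra of full rank. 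A semisimplicity-plus-Faltings argument bounds the index of this sublattice in $\Lie(\calG_{A,\ell})(\ZZ_\ell)$ independently of $\ell$, and reducing modulo $\ell$ gives the desired bound for large $\ell$. The finitely many remaining small primes each contribute a finite index by Bogomolov, absorbing into the constant $C$.

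For part (iii), once (i) and (ii) are established, for $\ell$ sufficiently large the reduction $\calG_{A,\ell}\otimes \FF_\ell$ is a connected reductive group over $\FF_\ell$ of bounded rank and dimension, so its derived group $\calG_{A,\ell}^{\der}(\FF_\ell)$ is (nearly) quasisimple of Lie type in characteristic $\ell$, perfect, and generated by its $\ell$-Sylow subgroups. By a Nori/Larsen--Pink style argument, the smallest index of a proper subgroup of $\calG_{A,\ell}^{\der}(\FF_\ell)$ grows at least like a positive power of $\ell$, so once $\ell > N(C,\dim A)$ any subgroup of $\calG_{A,\ell}(\FF_\ell)$ of index at most $C$ must contain every element of order $\ell$ and hence the full commutator subgroup; this applies in particular to $\bbar\rho_{A,\ell}(\Gal_K)$ by (ii).

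The principal obstacle is part (i), which rests on integral $p$-adic Hodge theory and is where essentially all the geometric input is consumed. Parts (ii) and (iii) are then largely formal consequences: (ii) combines Faltings' theorem with a Lie-algebraic uniformization argument, and (iii) is a group-theoretic statement about subgroups of bounded index in finite groups of Lie type whose rank is bounded while $\ell \to \infty$.
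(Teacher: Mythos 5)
The paper's proof of this proposition is essentially a citation: it invokes Serre's 1985--1986 Coll\`ege de France course (the r\'esum\'e and the letter to Vign\'eras in his collected works) for the construction of reductive groups $H_\ell \subset \GL_{T_\ell(A),\FF_\ell}$ with the three stated properties, and then Wintenberger's identification of $H_\ell$ with the special fiber of $\calG_{A,\ell}$ for $\ell$ large (which is where the absolute simplicity hypothesis is used). Your part~(i) is in the right spirit --- Fontaine--Laffaille theory is indeed how Wintenberger proves integral reductivity --- but you have misidentified where the difficulty lies: you call (i) ``the principal obstacle'' and (ii), (iii) ``largely formal consequences,'' when in fact part~(ii), the uniform bound on $[\calG_{A,\ell}(\FF_\ell) : \bbar\rho_{A,\ell}(\Gal_K)]$, is the deep content of Serre's theorem and does \emph{not} follow formally from~(i).

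There are two concrete gaps in your part~(ii). First, the step ``a semisimplicity-plus-Faltings argument bounds the index of this sublattice in $\Lie(\calG_{A,\ell})(\ZZ_\ell)$ independently of $\ell$'' asserts exactly the uniformity you are trying to prove. Faltings and Bogomolov are ``for each $\ell$'' statements; the passage to a bound uniform in $\ell$ is precisely Serre's theorem, which requires a genuinely different collection of ingredients (Frobenius tori, control of the abelianization via Hecke characters and Shimura--Taniyama, Nori's theorem on subgroups of $\GL_n(\FF_\ell)$). No argument of the form you describe is known. Second, and independently: even granting a uniform bound on the index of $\log(\rho_{A,\ell}(\Gal_K)\cap K_1)$ inside $\Lie(\calG_{A,\ell})(\ZZ_\ell)$, where $K_1$ denotes the principal congruence subgroup, this controls only the pro-$\ell$ index $[K_1:\rho_{A,\ell}(\Gal_K)\cap K_1]$, not the residual index $[\calG_{A,\ell}(\FF_\ell):\bbar\rho_{A,\ell}(\Gal_K)]$; the two are independent factors of the total index in $\calG_{A,\ell}(\ZZ_\ell)$, so ``reducing modulo $\ell$'' does not transport the first bound to the second.

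Your part~(iii) is a valid alternative route given (i) and (ii): deducing (iii) from (ii) via Landazuri--Seitz type lower bounds on the minimal index of a proper subgroup of a finite group of Lie type of bounded rank is a clean observation, although the assertion that a bounded-index subgroup ``must contain every element of order $\ell$'' is not quite the right mechanism --- the correct statement is that its image in each quasi-simple quotient must be the whole group once $\ell$ exceeds a bound depending only on $C$ and the rank, and perfectness of the commutator subgroup then finishes the argument. But this does not repair the gap in~(ii), which is where the actual work resides.
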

\begin{proof}
In Serre's 1985-1986 course at the Coll\`ege de France \cite{MR1730973}*{136}, he showed that the groups $\bbar\rho_{A,\ell}(\Gal_K)$ are essentially the $\FF_\ell$-points of certain reductive groups.   For each prime $\ell$, he constructs a certain connected algebraic subgroup $H_\ell$ of $\GL_{T_\ell(A),\FF_\ell}\cong \GL_{2\dim A,\FF_\ell}$.   There exists a finite extension $L/K$ for which the following properties hold for all sufficiently large primes $\ell$:
\begin{itemize}
\item  $H_\ell$ is reductive,
\item $\bbar\rho_{A,\ell}(\Gal_L)$ is a subgroup of $H_\ell(\FF_\ell)$ and the index $[H_\ell(\FF_\ell):\bbar\rho_{A,\ell}(\Gal_L)]$ can be bounded independent of $\ell$.
\item $\bbar\rho_{A,\ell}(\Gal_L)$ contains the commutator subgroup of $H_\ell(\FF_\ell)$.
\end{itemize}
Detailed sketches of Serre's results were supplied in letters that have since been published in his collected papers; see the beginning of \cite{MR1730973}, in particular the letter to M.-F.~Vign\'eras \cite{MR1730973}*{137}.  The paper \cite{MR1944805} also contains everything we need.

In \cite{MR1944805}*{\S3.4}, it is shown that Serre's group $H_\ell$ equals the special fiber of $\calG_{A,\ell}$ for all sufficiently large $\ell$.  Parts (ii) and (iii) then follow from the properties of $H_\ell$.  For part (i), see \cite{MR1944805}*{\S2.1} and \cite{MR1339927}.
\end{proof}

\subsection{Independence}
Combining all our $\ell$-adic representations together, we obtain a single Galois representation
\[
\rho_A \colon \Gal_K \to \prod_\ell \Aut_{\QQ_\ell}(V_\ell(A))
\]
which describes the Galois action on all the torsion points of $A$.  The following theorem shows that, after possibly replacing $K$ by a finite extension, the Galois representations $\rho_{A,\ell}$ will be \emph{independent}.   
\begin{prop}[Serre \cite{MR1730973}*{138}] \label{P:independence}
There is a finite Galois extension $K'$ of $K$ in $\Kbar$ such that $\rho_A(\Gal_{K'})$ equals $\prod_\ell \rho_{A,\ell}(\Gal_{K'})$.
\end{prop}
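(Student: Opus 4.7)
\medskip

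The plan is to view $\rho_A(\Gal_K)$ as a closed subgroup of the product $G:=\prod_\ell \rho_{A,\ell}(\Gal_K)$ that surjects onto each factor, and to show that a Goursat-style obstruction to equality vanishes after a finite base change. First, without loss of generality I would replace $K$ by $K_A^{\conn}$; the statement allows this, and then each $\G_{A,\ell}$ is connected reductive, $G_\ell := \rho_{A,\ell}(\Gal_K)$ is an open subgroup of $\G_{A,\ell}(\QQ_\ell)$ by Proposition~\ref{P:Bogomolov}, and for all but finitely many $\ell$ the integral model $\calG_{A,\ell}$ is reductive with $[\calG_{A,\ell}(\FF_\ell):\bbar\rho_{A,\ell}(\Gal_K)]$ bounded uniformly and $\bbar\rho_{A,\ell}(\Gal_K)$ containing the commutator subgroup of $\calG_{A,\ell}(\FF_\ell)$ (Proposition~\ref{P:image of Galois mod ell}).

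Next I would formalize the obstruction. Write $H:=\rho_A(\Gal_K)\subseteq G$; each projection $H\to G_\ell$ is surjective. If $H\neq G$, then by applying Goursat's lemma to finite subproducts, there must exist distinct primes $\ell_1,\ell_2$ together with open normal subgroups $N_i\trianglelefteq G_{\ell_i}$ and a non-trivial finite common quotient $F$ such that the projection of $H$ to $G_{\ell_1}/N_1\times G_{\ell_2}/N_2\cong F\times F$ lands in the graph of an isomorphism. Dually, $H$ fails to equal $G$ precisely when infinitely many (or persistently many) such common finite quotients $F$ appear. The strategy is to split $F$ into its abelian and perfect parts and eliminate both.

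For the \emph{perfect (non-abelian)} part: for sufficiently large $\ell$, Proposition~\ref{P:image of Galois mod ell}(iii) implies that any non-abelian simple quotient of $\bbar\rho_{A,\ell}(\Gal_K)$ must arise from a simple factor of the derived group of $\calG_{A,\ell}(\FF_\ell)^{\circ}$, i.e.\ a finite simple group of Lie type in characteristic $\ell$. Since finite simple groups of Lie type in distinct characteristics are non-isomorphic (by Artin--Tits, outside finitely many exceptions), no non-abelian simple $F$ can be a common quotient for distinct large primes $\ell_1\neq\ell_2$. Hence all common non-abelian quotients come from a finite set of small primes, which can be absorbed by replacing $K$ with a finite extension $K''$ that kills the corresponding correlations (there are only finitely many to kill, since the image is open, so a suitable $K''$ exists).

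For the \emph{abelian} part: the abelianization $G_\ell^{\ab}$ factors through the action on $\bigwedge^{2\dim A} V_\ell(A)$ and on the connected center of $\G_{A,\ell}$. Finite abelian quotients arising from the cyclotomic character and from Hecke characters attached to the toric part of $\G_{A,\ell}$ produce common finite quotients (e.g.\ roots of unity) across different primes; however, by Hermite--Minkowski there are only finitely many cyclic extensions of $K$ of bounded ramification and bounded degree unramified outside a fixed set, so after enlarging $K''$ to a further extension $K'$ that contains these, all such abelian correlations disappear. Concretely, one chooses $K'$ so that the determinant characters $\det\rho_{A,\ell}$ and, more generally, all characters of $\Gal_{K'}$ factoring through $\prod_\ell G_\ell^{\ab}/[\text{central torus image}]$ become trivial on the correlation part.

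Putting these together, after passing to the finite extension $K'$, no non-trivial common finite quotient remains, so $\rho_A(\Gal_{K'})=\prod_\ell \rho_{A,\ell}(\Gal_{K'})$ by the Goursat criterion applied consistently across all finite subproducts (and taking a profinite limit, using that closed subgroups of profinite groups are determined by their finite quotients). The main obstacle I expect is the abelian step: one has to produce a \emph{single} finite extension $K'$ that simultaneously trivializes correlations among \emph{all} primes $\ell$, which relies on a uniform bound on the conductor of the abelian characters of $\Gal_K$ appearing in $\rho_A^{\ab}$ — this is where the structure of the center of $\G_{A,\ell}$ (essentially a $\QQ$-torus independent of $\ell$ under MT, and controlled unconditionally via Faltings/Bogomolov) is used.
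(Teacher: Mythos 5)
The paper does not prove this proposition; it is Serre's theorem and is cited to his collected papers, so there is no in-paper argument to compare with. Taken on its own terms, your plan follows the correct high-level strategy (analyze the closed image $H\subseteq\prod_\ell G_\ell$, separate non-abelian from abelian composition factors, rule out shared non-abelian simple quotients across distinct residue characteristics, absorb the rest by a finite base change). But two steps as written do not go through.

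The Goursat step is not quite right. Passing to finite subproducts is fine, but the claim that failure of $H=\prod_\ell G_\ell$ forces a \emph{pairwise} entanglement is only clean for non-abelian simple quotients: if $S$ is a non-abelian simple quotient of a finite product $\prod_i G_i$, then $S$ factors through exactly one $G_i$, so such an entanglement does localize to a pair; but an abelian simple quotient $\ZZ/p$ of $\prod_i G_i$ can genuinely mix several factors. The standard fix --- pass to an open normal subgroup $G_\ell^+$ that is topologically perfect (roughly the preimage of the commutator subgroup of $\calG_{A,\ell}(\FF_\ell)$, using Proposition~\ref{P:image of Galois mod ell}), apply the lemma that a closed subgroup of a product of perfect profinite groups surjecting onto every pair is the whole product, and then treat the abelian quotients $G_\ell/G_\ell^+$ separately --- is missing from your sketch, and it is not cosmetic. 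The abelian step has the more serious gap: Hermite--Minkowski cannot be applied as you invoke it, because the abelian quotients $G_\ell^{\ab}$ have order growing without bound as $\ell$ grows (the prime-to-$\ell$ part already contains something the size of the $\FF_\ell$-points of a torus), and the characters of $\Gal_K$ that arise are ramified at $\ell$, so neither the degree nor the ramification set is fixed. You correctly flag at the end that one needs a uniform conductor bound on the abelian characters occurring in $\rho_A^{\ab}$; that bound is the real content of the abelian step (it comes from the theory of abelian $\ell$-adic representations: the abelian part of $\rho_{A,\ell}$ is locally algebraic and factors through a fixed Serre torus, with conductor bounded independently of $\ell$), and without supplying it the assertion that a single $K'$ kills all abelian correlations remains unjustified.
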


We will need the following straightforward consequence:

\begin{prop}  \label{P:independence density}
Fix an extension $K'/K$ as in Proposition~\ref{P:independence}.   Let $\Lambda$ be a finite set of rational primes.   For each prime $\ell \in \Lambda$, fix a subset $U_\ell$ of $\bbar\rho_{A,\ell}(\Gal_K)$ that is stable under conjugation.   Let $\calS$ be the set of $v\in \Sigma_K$ such that $\bbar\rho_{A,\ell}(\Frob_v)\subseteq U_\ell$ for all $\ell\in \Lambda$.   Then $\calS$ has density
\[
\sum_{C} \frac{|C|}{|\Gal(K'/K)|}\cdot \prod_{\ell\in \Lambda} \frac{|\bbar\rho_{A,\ell}(\Gamma_C) \cap U_\ell |}{|\bbar\rho_{A,\ell}(\Gamma_C)|}
\]
where $C$ varies over the conjugacy classes of $\Gal(K'/K)$ and $\Gamma_C$ is the set of $\sigma\in \Gal_K$ for which $\sigma|_{K'}\in C$.
\end{prop}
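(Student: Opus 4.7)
The plan is to realize the density as a Chebotarev density on a single finite Galois extension $M/K$ that simultaneously governs all the mod $\ell$ representations for $\ell \in \Lambda$ and the extension $K'/K$, and then to reduce the resulting Chebotarev count to a product via the independence of Proposition~\ref{P:independence}.

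I would begin by letting $L$ be the fixed field in $\Kbar$ of $\ker(\bbar\rho_\Lambda)$, where $\bbar\rho_\Lambda := \prod_{\ell\in \Lambda}\bbar\rho_{A,\ell}$, and setting $M := L K'$. Writing $G := \bbar\rho_\Lambda(\Gal_K)$ and $H_\ell := \bbar\rho_{A,\ell}(\Gal_{K'})$, the conclusion of Proposition~\ref{P:independence} read modulo $\ell$ yields $\bbar\rho_\Lambda(\Gal_{K'}) = \prod_{\ell \in \Lambda} H_\ell =: H$. Combining $\bbar\rho_\Lambda$ with restriction to $K'$ gives an injection $\Gal(M/K) \hookrightarrow G \times \Gal(K'/K)$ under which $\Gal(M/K') = H$; in particular $\Gal(M/K)$ is an extension of $\Gal(K'/K)$ by $H$.

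Next I would apply Chebotarev to $M/K$. Let $V := \{g \in G : \pi_\ell(g) \in U_\ell \text{ for all } \ell \in \Lambda\}$, where $\pi_\ell : G \to \bbar\rho_{A,\ell}(\Gal_K)$ is the $\ell$-th projection. Since each $U_\ell$ is conjugation-stable, so is $V$ in $G$, and the preimage $W$ of $V$ in $\Gal(M/K)$ is a union of conjugacy classes. Thus the density of $\calS$ equals $|W|/|\Gal(M/K)|$. Fibring $W$ over $\pi : \Gal(M/K) \to \Gal(K'/K)$: for each $\sigma \in \Gal(K'/K)$, pick a lift $\tilde\sigma$ and set $g_\sigma = (g_{\sigma,\ell})_\ell \in G$. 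The fibre $\pi^{-1}(\sigma)$ maps bijectively onto the coset $g_\sigma H \subseteq G$, and the product structure $H = \prod_\ell H_\ell$ together with $g_{\sigma,\ell} H_\ell = \bbar\rho_{A,\ell}(\tilde\sigma\,\Gal_{K'}) = \bbar\rho_{A,\ell}(\Gamma_{\{\sigma\}})$ makes the constraint separate coordinatewise:
\[
|W \cap \pi^{-1}(\sigma)| \;=\; \prod_{\ell \in \Lambda} |g_{\sigma,\ell} H_\ell \cap U_\ell| \;=\; \prod_{\ell \in \Lambda} |\bbar\rho_{A,\ell}(\Gamma_{\{\sigma\}}) \cap U_\ell|.
\]
Conjugation-stability of each $U_\ell$ ensures this product depends only on the conjugacy class $C$ of $\sigma$ in $\Gal(K'/K)$.

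To match the formula in the statement, I would rewrite each factor in terms of $\Gamma_C$. Letting $L_\ell$ be the fixed field of $\ker \bbar\rho_{A,\ell}$ and $C'$ the image of $C$ under $\Gal(K'/K) \twoheadrightarrow \Gal(L_\ell \cap K'/K)$, an elementary count (using that cosets of $H_\ell$ in $\bbar\rho_{A,\ell}(\Gal_K)$ are indexed by $\Gal(L_\ell \cap K'/K)$) shows that the sets $\bbar\rho_{A,\ell}(\Gamma_{\{\sigma\}})$ for $\sigma \in C$ constitute $|C'|$ pairwise conjugate cosets of $H_\ell$, each intersecting $U_\ell$ in the same number of elements. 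Hence
\[
\frac{|\bbar\rho_{A,\ell}(\Gamma_{\{\sigma\}}) \cap U_\ell|}{|H_\ell|} \;=\; \frac{|\bbar\rho_{A,\ell}(\Gamma_C) \cap U_\ell|}{|\bbar\rho_{A,\ell}(\Gamma_C)|},
\]
and substituting this into the Chebotarev expression, together with $|\Gal(M/K)| = |\Gal(K'/K)| \prod_\ell |H_\ell|$, reproduces the stated formula. The one slightly delicate point is this last identity; everything else is a formal assembly of Chebotarev and the product structure afforded by independence.
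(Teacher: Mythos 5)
Your proof is correct and follows essentially the same route as the paper: apply Chebotarev to the finite extension cut out by $\prod_{\ell\in\Lambda}\bbar\rho_{A,\ell}$ (together with $K'$), split the count by cosets of $\Gal_{K'}$, invoke Proposition~\ref{P:independence} to factor each coset-intersection ratio as a product over $\ell\in\Lambda$, and then group cosets by their conjugacy class in $\Gal(K'/K)$, using conjugation-stability of $U_\ell$ to rewrite the per-coset ratio as $|\bbar\rho_{A,\ell}(\Gamma_C)\cap U_\ell|/|\bbar\rho_{A,\ell}(\Gamma_C)|$. The only difference is cosmetic: you explicitly build the compositum $M=LK'$ and the injection $\Gal(M/K)\hookrightarrow G\times\Gal(K'/K)$, whereas the paper works directly with Haar measures on $\Gal_K$ and $\Gal_{K'}$; both carry out the same counting, and your slightly more explicit coset analysis fills in what the paper states more tersely.
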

\begin{proof}
Set $m:=\prod_{\ell\in \Lambda} \ell$, and define $U_m:=\prod_{\ell|m} U_\ell$ which we view as a subset of $\Aut_{\ZZ/m\ZZ}(A[m])$.   Let $\bbar\rho_{A,m}\colon \Gal_K\to \Aut_{\ZZ/m\ZZ}(A[m])$ be the homomorphism describing the Galois action on $A[m]$.  Let $\mu$ and $\mu'$ be the Haar measures on $\Gal_K$ normalized so that $\mu(\Gal_K)=1$ and $\mu'(\Gal_{K'})=1$.    

The Chebotarev density theorem says that the density $\delta$ of $\calS$ is defined and equals $\mu(\{\sigma \in \Gal_K: \bbar\rho_{A,m}(\sigma)\in U_m \})$.   Let $\{\sigma_i\}_{i\in I}$ be a subset of $\Gal_K$ consisting of representatives of the cosets of $\Gal_{K'}$ in $\Gal_K$.  We then have
\begin{align*}
[K':K]\delta=\sum_{i\in I} \mu'(\{\sigma \in \sigma_i\Gal_{K'}: \bbar\rho_{A,m}(\sigma)\in U_m\}) = \sum_{i\in I} \frac{|\bbar\rho_{A,m}(\sigma_i\Gal_{K'})\cap U_m|}{|\bbar\rho_{A,m}(\sigma_i\Gal_{K'})|}.
\end{align*}
We have $\bbar\rho_{A,m}(\Gal_{K'}) = \prod_{\ell|m} \bbar\rho_{A,\ell}(\Gal_{K'})$ by our choice of $K'$ and hence $\bbar\rho_{A,m}(\sigma_i\Gal_{K'})$ equals $\prod_{\ell|m} \bbar\rho_{A,\ell}(\sigma_i\Gal_{K'})$ for all $i\in I$.  Therefore,
\begin{align*}
[K':K]\delta= \sum_{i\in I} \prod_{\ell|m} \frac{|\bbar\rho_{A,\ell}(\sigma_i\Gal_{K'})\cap U_\ell|}{|\bbar\rho_{A,\ell}(\sigma_i\Gal_{K'})|}
\end{align*}
Since $U_\ell$ is stable under conjugation, we find that $|\bbar\rho_{A,\ell}(\sigma_i\Gal_{K'})\cap U_\ell|/|\bbar\rho_{A,\ell}(\sigma_i\Gal_{K'})|$ depends only on the conjugacy class $C$ of $\Gal(K'/K)$ containing $\sigma_i|_{K'}$ and equals ${|\bbar\rho_{A,\ell}(\Gamma_C) \cap U_\ell |}/{|\bbar\rho_{A,\ell}(\Gamma_C)|}$.   Using this and grouping the $\sigma_i$ by their conjugacy class when restricted to $K'$, we deduce that  
\[
[K':K]\delta= \sum_{C} |C| \prod_{\ell | m} \frac{|\bbar\rho_{A,\ell}(\Gamma_C) \cap U_\ell |}{|\bbar\rho_{A,\ell}(\Gamma_C)|}
\]
where $C$ varies over the conjugacy classes of $\Gal(K'/K)$.
\end{proof}

\section{Reductive groups: background}  \label{S:reductive}
Fix a perfect field $k$ and an algebraic closure $\kbar$.  
\subsection{Tori} 
An \defi{(algebraic) torus} over $k$ is an algebraic group $\T$ defined over $k$ for which $\T_{\kbar}$ is isomorphic to $\GG_{m,\kbar}^r$ for some integer $r$.   Fix a torus $\T$ over $k$.    Let $X(\T)$ be the group of characters $\T_{\kbar} \to \GG_{m,\kbar}$; it is a free abelian group whose rank equals the dimension of $\T$.   Let $\Aut(\T_{\kbar})$ be the group of automorphisms of the algebraic group $\T_{\kbar}$.      For each $f\in \Aut(\T_{\kbar})$, we have an isomorphism $f_*\colon X(\T)\to X(\T), \,\alpha\mapsto \alpha\circ f^{-1}$; this gives a group isomorphism 
\[
\Aut(\T_{\kbar}) \xrightarrow{\sim} \Aut(X(\T)),\,\, f\mapsto f_*
\]
that we will often use as an identification.

There is a natural action of the absolute Galois group $\Gal_k$ on $X(\T)$; it satisfies $\sigma(\alpha(t))=\sigma(\alpha)(\sigma(t))$ for all $\sigma\in \Gal_k$, $\alpha\in X(\T)$ and $t\in \T(\kbar)$.  Let $\varphi_\T\colon \Gal_k \to \Aut(X(\T))$ be the homomorphism describing this action, that is, $\varphi_\T(\sigma)\alpha = \sigma(\alpha)$ for $\sigma\in \Gal_k$ and $\alpha\in X(\T)$.   We say that the torus $\T$ is \defi{split} if it is isomorphic to $\GG_{m,k}^r$; equivalently, if $\varphi_\T(\Gal_k)=1$.\\

Let $\G$ be a reductive group over $k$.   A \defi{maximal torus} of $\G$ is a closed algebraic subgroup that is a torus (also defined over $k$) and is not contained in any larger such subgroup.   If $\T$ and $\T'$ are maximal tori of $\G$, then $\T_{\kbar}$ and $\T'_{\kbar}$ are maximal tori of $\G_{\kbar}$ and are conjugate by some element of $\G(\kbar)$.  The group $\G$ has a maximal torus whose dimension is called the \defi{rank} of $\G$.  We say that $\G$ is \defi{split} if it has a maximal torus that is split.

\subsection{Weyl group} \label{SS:Weyl group}

Let $\G$ be a connected reductive group over $k$.   Fix a maximal torus $\T$ of $\G$.  The \defi{Weyl group} of $\G$ with respect to $\T$ is the (finite) group
 \[
 W(\G,\T) := N_{\G}(\T)(\kbar)/\T(\kbar)
 \]
 where $N_{\G}(\T)$ is the normalizer of $\T$ in $\G$.      For an element $g\in N_{\G}(\T)(\kbar)$, the morphism $\T_{\kbar}\to \T_{\kbar},\, t\mapsto gtg^{-1}$ is an isomorphism that depends only on the image of $g$ in $W(\G,\T)$; this induces a faithful action of $W(\G,\T)$ on $\T_{\kbar}$.  So we can identify $W(\G,\T)$ with a subgroup of $\Aut(\T_{\kbar})$ and hence also of $\Aut(X(\T))$.  
 
  There is a natural action of $\Gal_k$ on $W(\G,\T)$.  For $\sigma\in \Gal_k$ and $w\in W(\G,\T)$, we have  $\varphi_\T(\sigma) \circ w \circ \varphi_\T(\sigma)^{-1}=\sigma(w)$.  In particular, note that the action of $\Gal_k$  of $W(\G,\T)$ is trivial if $\T$ is split.
  
We define $\Pi(\G,\T)$ to be subgroup of $\Aut(X(\T))$ generated by $W(\G,\T)$ and $\varphi_\T(\Gal_k)$.  The Weyl group $W(\G,\T)$ is a normal subgroup of $\Pi(\G,\T)$.   Up to isomorphism, the groups $W(\G,\T)$ and $\Pi(\G,\T)$ are independent of $\T$; we shall denote the abstract groups by $W(\G)$ and $\Pi(\G)$, respectively.  
 
\subsection{Maximal tori over finite fields} \label{SS:tori over finite fields}
We now assume that $k$ is a finite field $\FF_q$ with $q$ elements.   Let $\G$ be a connected reductive group defined over $\FF_q$.   Assume further that $\G$ is split and fix a split maximal torus $\T$.   Let $\T'$ be any maximal torus of $\G$.    There is an element $g\in \G(\FFbar_q)$ such that $g\T'_{\FFbar_q} g^{-1}=\T_{\FFbar_q}$.   Since $\T$ and $\T'$ are defined over $\FF_q$, we find that $\Frob_q(g)\T'_{\FFbar_q} \Frob_q(g)^{-1}=\T_{\FFbar_q}$ and hence $g \Frob_q(g)^{-1}$ belongs to $N_{\G}(\T)(\FFbar_q)$.    Let $\theta_{\G_A}(\T')$ be the conjugacy class of $W(\G,\T)$ containing the coset represented by $g \Frob_q(g)^{-1}$.   These conjugacy classes have the following interpretation:

\begin{prop}
The map $\T'\mapsto \theta_{\G}(\T')$ defines a bijection between the maximal tori of $\G$ up to conjugation in $\G(\FF_q)$ and the conjugacy classes of $W(\G,\T)$.
\end{prop}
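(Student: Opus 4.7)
The plan is to establish three properties of the map $\T' \mapsto \theta_\G(\T')$: well-definedness on $\G(\FF_q)$-conjugacy classes, surjectivity, and injectivity. The tools I shall use throughout are (a) the Lang--Steinberg theorem, which gives surjectivity of $x \mapsto x\,\Frob_q(x)^{-1}$ on any connected algebraic group over $\FF_q$; (b) the fact that $\T$ being split makes the action of $\Frob_q$ on $X(\T)$, and hence on $W(\G,\T)$, trivial; and (c) the classical fact that any two maximal tori of $\G_{\FFbar_q}$ are $\G(\FFbar_q)$-conjugate.

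For well-definedness, if $g, g' \in \G(\FFbar_q)$ both conjugate $\T'_{\FFbar_q}$ to $\T_{\FFbar_q}$, then $g' = ng$ for some $n \in N_{\G}(\T)(\FFbar_q)$, and a direct computation gives $g'\Frob_q(g')^{-1} = n \cdot (g\Frob_q(g)^{-1}) \cdot \Frob_q(n)^{-1}$; reducing modulo $\T(\FFbar_q)$ and using (b) to replace $\Frob_q(n)^{-1}$ by $n^{-1}$ shows that the two images in $W(\G,\T)$ lie in the same conjugacy class. Independence of the $\G(\FF_q)$-conjugacy class of $\T'$ is immediate, since replacing $\T'$ by $h\T'h^{-1}$ with $h \in \G(\FF_q)$ only replaces $g$ by $gh^{-1}$. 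For surjectivity, given $n \in N_{\G}(\T)(\FFbar_q)$, (a) yields $g \in \G(\FFbar_q)$ with $g\Frob_q(g)^{-1} = n$, hence $\Frob_q(g) = n^{-1}g$; setting $\T' := g^{-1}\T g$ and using that $n$ normalizes $\T$ then gives $\Frob_q(\T') = g^{-1}n\,\T\,n^{-1}g = \T'$, so $\T'$ is defined over $\FF_q$ and realizes the class of $n$.

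Injectivity is the main obstacle, and my plan is to handle it cohomologically. By (c), the $\G(\FF_q)$-conjugacy classes of maximal $\FF_q$-tori in $\G$ that are geometrically conjugate to $\T$ are classified by $H^1(\Gal_{\FF_q}, N_{\G}(\T)(\FFbar_q))$ via standard Galois twisting, and by (c) this accounts for every maximal $\FF_q$-torus. The exact sequence $1 \to \T(\FFbar_q) \to N_{\G}(\T)(\FFbar_q) \to W(\G,\T) \to 1$, combined with the vanishing of $H^1(\Gal_{\FF_q}, \T(\FFbar_q))$ (another application of (a), to the connected torus $\T$), makes the induced map to $H^1(\Gal_{\FF_q}, W(\G,\T))$ injective, and by (b) the target identifies with the set of conjugacy classes of $W(\G,\T)$. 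What requires the most care, and where I expect to spend the real effort, is checking that the resulting cohomological bijection coincides literally with $\T' \mapsto \theta_\G(\T')$; the alternative, which I would fall back on if needed, is a direct injectivity argument that lifts an identity in $W(\G,\T)$ to one in $N_{\G}(\T)(\FFbar_q)$ up to an element of $\T(\FFbar_q)$, then uses Lang--Steinberg inside $\T$ to absorb that $\T(\FFbar_q)$-valued error and exhibit by hand an element of $\G(\FF_q)$ conjugating one torus to the other. With injectivity in place, the surjectivity already proved upgrades the map to a bijection.
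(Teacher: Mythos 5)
Your proof is correct, but it takes a genuinely different route from the paper, whose ``proof'' of this proposition consists of a citation to Carter, \textit{Finite groups of Lie type}, Prop.~3.3.3, together with the one-line observation that $\Frob_q$ acts trivially on $W(\G,\T)$ because $\T$ is split (so Carter's $\Frob_q$-conjugacy classes become ordinary conjugacy classes). You instead reprove the classification from scratch via Lang--Steinberg and nonabelian Galois cohomology of $\Gal_{\FF_q}$. What this buys you is a self-contained argument that makes the role of $\T$ being split, and hence the identification $H^1(\Gal_{\FF_q},W(\G,\T))\cong W(\G,\T)^\sharp$, completely transparent; what you lose is brevity. Your computations for well-definedness and surjectivity are correct as written.

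One soft spot worth flagging: in the injectivity step, the vanishing of $H^1(\Gal_{\FF_q},\T(\FFbar_q))$ alone does not make the map of \emph{pointed sets} $H^1(\Gal_{\FF_q},N_\G(\T)(\FFbar_q))\to H^1(\Gal_{\FF_q},W(\G,\T))$ injective; it only kills the fiber over the distinguished class. To get injectivity over every fiber you need the standard twisting refinement: for each cocycle $c$ with values in $N_\G(\T)$, twist $\T$ by $c$ and apply Lang--Steinberg to the twisted torus, which is still connected. Your ``fallback'' direct argument --- lifting an identity in $W(\G,\T)$ to $N_\G(\T)(\FFbar_q)$ modulo $\T(\FFbar_q)$ and then solving a Lang--Steinberg equation for a twisted Frobenius on $\T$ to absorb the $\T(\FFbar_q)$-error --- is precisely that twisting argument unwound, and it does close the gap; so I would either include it or say explicitly that the cohomological injectivity uses twisting. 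Also, your classification of $\G(\FF_q)$-conjugacy classes of maximal $\FF_q$-tori by $H^1(\Gal_{\FF_q},N_\G(\T)(\FFbar_q))$ uses not just (c) but also the triviality of $H^1(\Gal_{\FF_q},\G(\FFbar_q))$, which is again (a) applied to the connected group $\G$, not (c).
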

\begin{proof}
This is \cite{MR794307}*{Prop.~3.3.3}.  Note that the action of $\Frob_q$ on $W(\G,\T)$ is trivial since $\T$ is split, so the $\Frob_q$-conjugacy classes of $W(\G,\T)$ in \cite{MR794307} are just the usual conjugacy classes of $W(\G,\T)$.
\end{proof}

Let $\G(\FF_q)_{sr}$ be the set of $g\in \G(\FF_q)$ that are semisimple and regular in $\G$.   Each $g\in \G(\FF_q)_{sr}$ is contained in a unique maximal torus $\T_g$ of $\G$.   We define the map 
\[
\theta_{\G} \colon \G(\FF_q)_{sr} \to W(\G,\T)^\sharp,\quad g\mapsto \theta_{\G}(\T_g)
\]
where $W(\G,\T)^\sharp$ is the set of conjugacy classes of $W(\G,\T)$.  We will need the following equidistribution result later.

\begin{lemma} \label{L:JKZ estimate}
Let $\G$ be a connected and split reductive group over a finite field $\FF_q$, and fix a split maximal torus $\T$.   Let $C$ be a subset of $W(\G,\T)$ that is stable under conjugation and let $\kappa$ be a subset of $\G(\FF_q)$ that is a union of cosets of the commutator subgroup of $\G(\FF_q)$.  Then
\[
\frac{|\{ g \in \kappa \cap \G(\FF_q)_{sr} : \theta_{\G}(g)\subseteq C\}|}{|\kappa|} = \frac{|C|}{|W(\G,\T)|} + O(1/q)
\]
where the implicit constant depends only on the type of $\G$.
\end{lemma}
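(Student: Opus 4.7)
The plan is to parameterize maximal tori of $\G$ over $\FF_q$ by conjugacy classes of $W(\G,\T)$ and to split the count according to this parameterization. By the proposition preceding the lemma, for each conjugacy class $[w] \subset W(\G,\T)$ we may fix a maximal torus $\T_w$ of $\G$ defined over $\FF_q$ with $\theta_{\G}(\T_w)=[w]$, and every maximal torus of $\G$ is $\G(\FF_q)$-conjugate to exactly one such $\T_w$. If $g \in \G(\FF_q)_{sr}$, then its centralizer is the unique maximal torus $\T_g$ containing $g$, and $\theta_{\G}(g)$ is the class corresponding to the $\G(\FF_q)$-conjugacy class of $\T_g$. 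Since cosets of the commutator subgroup are stable under conjugation in $\G(\FF_q)$, the set $\kappa$ is conjugation-invariant, and orbit-stabilizer (with stabilizer $N_{\G}(\T_w)(\FF_q)$) gives
\[
N := |\{g \in \kappa \cap \G(\FF_q)_{sr} : \theta_{\G}(g)\subseteq C\}| = \sum_{[w]\subseteq C} \frac{|\G(\FF_q)|}{|N_{\G}(\T_w)(\FF_q)|} \cdot |\T_w(\FF_q)_{\reg} \cap \kappa|,
\]
where $\T_w(\FF_q)_{\reg}:=\T_w(\FF_q) \cap \G(\FF_q)_{sr}$.

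Next I would simplify using $|N_{\G}(\T_w)(\FF_q)| = |\T_w(\FF_q)|\cdot|C_W(w)|$ (since $\Frob_q$ acts on $N_\G(\T_w)/\T_w \cong W(\G,\T)$ by $w$-conjugation, whose fixed points form $C_W(w)$) and the identity $|[w]|\cdot |C_W(w)| = |W(\G,\T)|$ to rewrite
\[
N = \frac{|\G(\FF_q)|}{|W(\G,\T)|} \sum_{[w]\subseteq C} |[w]| \cdot \frac{|\T_w(\FF_q)_{\reg} \cap \kappa|}{|\T_w(\FF_q)|}.
\]
The proof then comes down to two estimates, each with an error of size $O(1/q)$ depending only on the type of $\G$:
\emph{(a) Regular locus:} The non-regular elements of $\T_w$ lie in the union of the kernels of the (finitely many) roots of $\G$ relative to $\T_w$, which are proper subtori; thus $|\T_w(\FF_q) \setminus \T_w(\FF_q)_{\reg}| = O(q^{r-1})$ while $|\T_w(\FF_q)| = q^r + O(q^{r-1})$, giving $|\T_w(\FF_q)_{\reg}|/|\T_w(\FF_q)| = 1 + O(1/q)$.
\emph{(b) Equidistribution modulo commutator:} One needs
\[
\frac{|\T_w(\FF_q) \cap \kappa|}{|\T_w(\FF_q)|} = \frac{|\kappa|}{|\G(\FF_q)|}\bigl(1 + O(1/q)\bigr).
\]
Since $\G$ is split reductive, the derived subgroup $\G^{\der}$ is semisimple split, and by Steinberg's theorem (away from finitely many small $q$ absorbed into the constant) $[\G(\FF_q),\G(\FF_q)] = \G^{\der}(\FF_q)$, so the abelianization embeds in the split torus $(\G/\G^{\der})(\FF_q)$. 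The composition $\T_w(\FF_q) \to \G(\FF_q) \to (\G/\G^{\der})(\FF_q)$ is a product of norm maps (after passing to $\FFbar_q$), and a standard computation shows its image has index bounded in terms of the type of $\G$; Fourier expanding the indicator of $\kappa$ in characters of the abelianization and applying this cokernel bound yields the claimed equidistribution.

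Combining (a), (b), and $\sum_{[w]\subseteq C}|[w]| = |C|$, we get $N = |\kappa|\cdot |C|/|W(\G,\T)| + O(|\kappa|/q)$, and dividing by $|\kappa|$ finishes the proof. The main obstacle is step (b): controlling the intersection of each twisted torus with cosets of the commutator requires tracking the cokernel of $\T_w(\FF_q) \to (\G/\G^{\der})(\FF_q)$ uniformly in $w$, which is where the restriction to split $\G$ (and the dependence of the implicit constant on the root datum) enters the argument.
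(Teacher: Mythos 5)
Your proof takes a genuinely different route from the paper, and the overall structure is sound, but step (b) contains two errors. The paper reduces the statement to the adjoint semisimple case by passing to $\G^{\ad}=\G/Z(\G)$ and then cites Proposition~4.6 of \cite{JKV-splitting}, which already performs an equidistribution count over twisted maximal tori. You instead carry out that count directly inside $\G$: the parameterization of maximal tori by conjugacy classes of $W(\G,\T)$, the orbit-stabilizer computation, the identity $|N_\G(\T_w)(\FF_q)|=|\T_w(\FF_q)|\cdot|C_W(w)|$ (via Lang's theorem on the short exact sequence defining $W^{\Frob}$), and the estimate (a) on the regular locus are all correct.

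Step (b) is where the argument breaks down as written. First, the abelianization $\G(\FF_q)/[\G(\FF_q),\G(\FF_q)]$ does \emph{not} embed in $(\G/\G^{\der})(\FF_q)$; for $q$ large it \emph{surjects onto} $(\G/\G^{\der})(\FF_q)$, with kernel isomorphic to $H^1(\Frob,\ker(\G^{\sc}\to\G^{\der}))$. For example, $\G=\PGL_2$ has $\G/\G^{\der}=1$ but $\G(\FF_q)/[\G(\FF_q),\G(\FF_q)]\cong\ZZ/2\ZZ$ for $q$ odd, detected by the determinant mod squares. Second, and more importantly, a bound on the cokernel of $\T_w(\FF_q)\to\G(\FF_q)^{\ab}$ is not enough: in the Fourier expansion of $\mathbf{1}_\kappa$ over characters of the abelianization, any character that is trivial on the image of $\T_w(\FF_q)$ but nontrivial globally contributes a term of the \emph{same} order as the main term, so no amount of "index bounded by the type" yields an $O(1/q)$ error. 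What is needed is genuine surjectivity, i.e.\ $\T_w(\FF_q)\cdot[\G(\FF_q),\G(\FF_q)]=\G(\FF_q)$ for all $w$ once $q$ is large enough in terms of the type. This is true, but the proof requires applying Lang's theorem to the central isogeny $\G^{\sc}\times\mathbf{C}^{\circ}\to\G$ (where $\mathbf{C}^{\circ}$ is the connected center) together with its restriction to maximal tori: both $\G(\FF_q)$ and $\T_w(\FF_q)$ map onto the same obstruction group $H^1(\Frob,Z)$, and $[\G(\FF_q),\G(\FF_q)]$ agrees with the image of $\G^{\sc}(\FF_q)$ for $q$ large. With that replacement, step (b) gives equality (not just up to $O(1/q)$), and the rest of your argument then yields the lemma.
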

\begin{proof}
We shall reduce to a special case treated in \cite{JKV-splitting} which deals with semisimple groups.  Let $\G^{\ad}$ be the quotient of $\G$ by its center and let $\varphi\colon \G\to\G^{\ad}$ be the quotient homomorphism.   Let $\T^{\ad}$ be the image of $\T$ under $\varphi$; it is a split maximal torus of $\G^{\ad}$.   The homomorphism $\varphi$ induces a group isomorphism $\varphi_*\colon W(\G,\T)\xrightarrow{\sim} W(\G^{\ad},\T^{\ad})$.  An element $g\in \G(\FFbar_q)$ is regular and semisimple in $\G$ if and only if $\varphi(g)$ is regular and semisimple in $\G^{\ad}$.   For $g\in \G(\FF_q)_{sr}$, one can check that $\theta_{\G}(g)\subseteq C$ if and only if $\theta_{\G^{\ad}}(\varphi(g))\subseteq \varphi_*(C)$.   Therefore,
\begin{equation} \label{E:reduce to adjoint case}
\frac{|\{ g \in \kappa \cap \G(\FF_q)_{sr} : \theta_{\G}(g)\subseteq C\}|}{|\kappa|} = \frac{|\{ g \in \varphi(\kappa) \cap \G^{\ad}(\FF_q)_{sr} : \theta_{\G^{\ad}}(g)\subseteq \varphi_*(C)\}|}{|\varphi(\kappa)|}.
\end{equation}

Let $\G^{\der}$ be the derived subgroup of $\G$ and let $\pi\colon \G^{\sc}\to \G^{\der}$ be the simply connected cover of $\G^\der$.   The homomorphism $\pi':=\varphi\circ \pi\colon \G^{\sc} \to \G^{\ad}$ is a simply connected cover of $\G^{\ad}$.  Since $\G^{\ad}$ is adjoint, it is the product of simple adjoint groups defined over $\FF_q$.   Assuming that $q$ is sufficiently large, the group $\pi'(\G^{\sc}(\FF_q))$ agrees with the commutator subgroup of $\G^\ad(\FF_q)$ and is a product of simple groups of Lie type, see \cite{MR1370110}*{\S2.1} for background.  (We can later choose the implicit constant in the lemma to deal with the finitely many excluded $q$.)   Since $\pi'(\G^{\sc}(\FF_q))$ is perfect, we find that the image of the commutator subgroup of $\G(\FF_q)$ under $\varphi$ is $\pi'(\G^{\sc}(\FF_q))$.  In particular, $\varphi(\kappa) \subseteq \G^{\ad}(\FF_q)$ consists of cosets of $\pi'(\G^{\sc}(\FF_q))$.  Proposition 4.6 of \cite{JKV-splitting} now applies, and shows that that the right hand side of (\ref{E:reduce to adjoint case}) equals $|\varphi_*(C)|/|W(\G^{\ad},\T^{\ad})| +O(1/q) = |C|/|W(\G,\T)| + O(1/q)$ where the implicit constants depend only on the type of $\G^{\ad}$; the proposition is only stated for a single $\pi'(\G^{\sc}(\FF_q))$ coset, but one observes that the index $[\G^{\ad}(\FF_q):\pi'(\G^{\sc}(\FF_q))]$ can be bounded in terms of the type of $\G^{\ad}$.
\end{proof}

\section{Frobenius conjugacy classes} \label{S:Noot}
Let $A$ be a non-zero abelian variety over a number field $K$.   Assume that $K_A^\conn=K$ and fix an embedding $K\subseteq \CC$.   In this section, we state a theorem of R.~Noot which gives a weakened version of Conjecture~\ref{C:Frob conj}.

\subsection{The variety $\textrm{Conj}'(\G_A)$}
We first need to define a variant of the variety $\Conj(\G_A)$ from \S\ref{SS:Frobenius conjugacy classes}.  Let $\G_A^\der$ be the derived subgroup of $\G_A$.   Let $\{\mathbf{H}_i\}_{i\in I}$ be the minimal non-trivial normal connected closed subgroups of $(\G_A^\der)_{\Qbar}$.   The groups $\mathbf{H}_i$ are semisimple.  The morphism $\prod_{i\in I} \mathbf{H}_i \to (\G_A^\der)_{\Qbar},\, (g_i)_{i\in I}\mapsto \prod_{i\in I} g_i$  is a homomorphism of algebraic groups and has finite kernel.  

 Let $J$ be the set of $i\in I$ for which $\mathbf{H}_i$ is isomorphic to $\SO(2k_i)_{\Qbar}$ for some integer $k_i\geq 4$.  For each $i\in J$, we identify $\mathbf{H}_i$ with $\SO(2k_i)_{\Qbar}$ and we set $\mathbf{H}_i':=\mathbf{O}(2k_i)_{\Qbar}$.   For $i\in I-J$, we set $\mathbf{H}_i':=\mathbf{H}_i$.

Let $\mathbf{C}$ be the center of $\G_A$.  We define $\calA$ to be the group of automorphisms $f$ of the algebraic group $\G_{A,\Qbar}$ which satisfy the following properties:
\begin{itemize} 
\item $f(\mathbf{H}_i)=\mathbf{H}_i$ for all $i\in I$,
\item the morphism $f|_{\mathbf{H}_i}\colon \mathbf{H}_i \to \mathbf{H}_i$ agrees with conjugation by some element in $\mathbf{H}_i'$,
\item the morphism $f|_{\mathbf{C}_\Qbar}\colon \mathbf{C}_\Qbar\to \mathbf{C}_\Qbar$ is the identity map.
\end{itemize}

Let $R$ be the affine coordinate ring of $\G_A$.   The group $\calA$ acts on $R$ by composition, and we define $R^{\calA}$ to be the $\QQ$-subalgebra of $R$ consisting of those elements fixed by the $\calA$-action.   Define the $\QQ$-variety $\Conj'(\G_A):=\Spec(R^{\calA})$ and let $\cl_{\G_A}' \colon \G_A \to \Conj'(\G_A)$ be the morphism arising from the inclusion $R^{\calA} \hookrightarrow R$ of $\QQ$-algebras.  

\subsection{A theorem of Noot}
By Proposition~\ref{P:MT inclusion} and our ongoing assumption $K_A^\conn=K$, the representation $\rho_{A,\ell}$ has image in $\G_A(\QQ_\ell)$. 
The following is a consequence of \cite{MR2472133}*{Th\'eor\`eme~1.8}.

\begin{thm}[Noot] \label{T:Noot}
Let $v$ be a finite place of $K$ for which $A$ has good reduction.  Suppose that $\pi_1\pi_2^{-1}$ is not a root of unity for all distinct roots $\pi_1,\pi_2\in \Qbar$ of $P_{A_v}(x)$.  Then there exists an $F_v' \in \Conj'(\G_A)(\QQ)$ such that $F_v'=\cl_{\G_A}'(\rho_{A,\ell}(\Frob_v))$ for all primes $\ell$ satisfying $v\nmid \ell$.
\end{thm}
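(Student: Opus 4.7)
The plan is to extract this as a direct corollary of Noot's Théorème~1.8 in \cite{MR2472133}; the deep work is carried out there, and what remains is to unwind the definition of $\Conj'(\G_A)$. Under the hypothesis that no ratio of two distinct roots of $P_{A_v}(x)$ is a root of unity, Noot's theorem produces an element $t_v \in \G_A(\QQ)$ together with, for each prime $\ell \nmid v$, an automorphism $f_\ell \in \calA$ and an element $g_\ell \in \G_A(\Qbar_\ell)$ satisfying
\begin{equation*}
f_\ell(t_v) = g_\ell\,\rho_{A,\ell}(\Frob_v)\,g_\ell^{-1} \quad \text{in } \G_A(\Qbar_\ell).
\end{equation*}
The non-root-of-unity hypothesis is what forces the Frobenius torus to be maximal in $\G_{A,\Qbar_\ell}$, which is what allows Noot to rigidify the situation enough to descend the conjugacy class data from $\Qbar_\ell$ to $\QQ$.

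Granted such a $t_v$, set $F_v' := \cl'_{\G_A}(t_v) \in \Conj'(\G_A)(\QQ)$. By construction $\Conj'(\G_A) = \Spec(R^{\calA})$, so $\cl'_{\G_A}$ is $\calA$-invariant. Moreover $\calA$ contains the inner conjugations by $\G_A(\Qbar)$: conjugation by any $g \in \G_A(\Qbar)$ preserves each $\mathbf{H}_i$ (they are normal in $\G_A$), acts trivially on the centre, and restricts on each $\mathbf{H}_i$ to conjugation by the image of $g$ in $\mathbf{H}_i(\Qbar) \subseteq \mathbf{H}_i'(\Qbar)$. Hence $\cl'_{\G_A}$ is also invariant under $\G_A(\Qbar)$-conjugation, and combining both invariances yields
\begin{equation*}
\cl'_{\G_A}(\rho_{A,\ell}(\Frob_v)) = \cl'_{\G_A}\bigl(g_\ell\,\rho_{A,\ell}(\Frob_v)\,g_\ell^{-1}\bigr) = \cl'_{\G_A}(f_\ell(t_v)) = \cl'_{\G_A}(t_v) = F_v',
\end{equation*}
which is precisely the asserted $\ell$-independence and $\QQ$-rationality.

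The entire content of the theorem therefore sits on Noot's side: producing the $\QQ$-rational element $t_v$. Noot works in the Tannakian category of motives attached to $A_v$, where Frobenius appears as a central endomorphism; the motivic Galois group maps to $\G_A$ via comparison with the Betti realization, and the $\QQ$-rational Frobenius is transported to a $\QQ$-rational element of $\G_A$. The main obstacle, and the reason one lands in $\Conj'(\G_A)$ rather than the finer $\Conj(\G_A)$ of Conjecture~\ref{C:Frob conj}, is that this transportation is only determined up to the outer-automorphism ambiguity recorded by $\calA$: for each factor $\mathbf{H}_i \simeq \SO(2k_i)_{\Qbar}$ with $k_i \ge 4$ there are distinct $\SO(2k_i)$-conjugacy classes that agree on every $\SO(2k_i)$-invariant polynomial and are interchanged only after enlarging to $\mathbf{O}(2k_i)$. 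This outer ambiguity is exactly what $\calA$ absorbs, and without restricting to it one cannot hope to make the Frobenius data $\QQ$-rational and $\ell$-independent unconditionally.
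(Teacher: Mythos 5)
The paper offers no argument here at all—it simply states that the theorem is a consequence of Noot's Th\'eor\`eme~1.8 and leaves the derivation to the reader—so your proposal takes the same approach and usefully unwinds what that citation entails: that inner conjugation by $\G_A(\Qbar)$ lies in $\calA$ (via the decomposition $\G_A = Z(\G_A)\cdot\G_A^\der$ and the pairwise-commuting $\mathbf{H}_i$), so that $\cl'_{\G_A}$ is constant on the $\calA$-orbit Noot produces. The only caveat worth flagging is that you should double-check whether Noot's Th\'eor\`eme~1.8 literally hands you a $\QQ$-rational representative $t_v\in\G_A(\QQ)$, as you assert, or only a $\QQ$-point of the quotient variety; the latter weaker form is what Zywina actually uses (in \S6.2 he takes $t_v\in\T(\Qbar)$, not $\T(\QQ)$, with $\cl'_{\G_A}(t_v)=F_v'$), and it suffices for the theorem as stated.
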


The group of inner automorphisms of $\G_{A,\Qbar}$ is a normal subgroup of finite index in $\calA$.   So each element of $R^{\calA}$ is a central function of $\G_{A}$, and we have a natural morphism $\varphi\colon \Conj(\G_A)\to \Conj'(\G_A)$ that satisfies $\cl_{\G_A}'=\varphi\circ \cl_{\G_A}$.   Observe that if Conjecture~\ref{C:Frob conj} holds, then the $F_v'$ in Noot's theorem equals $\varphi(F_v)$.

\subsection{The group $\Gamma$}
Fix a maximal torus $\T$ of $\G_A$.    Let $\calA(\T)$ be the subgroup of $f\in \calA$ that satisfy $f(\T_{\Qbar})=\T_{\Qbar}$.  Every element of $\calA$ is conjugate to an element of $\calA(\T)$ by an inner automorphism of $\G_{A,\Qbar}$.   Define  
\[
\Gamma := \{ f|_{\T_{\Qbar}} : f \in \calA(\T) \};
\]
it is a (finite) subgroup of $\Aut(\T_{\Qbar})$ which is stable under the action of $\Gal_{\QQ}$.   For $t_1,t_2 \in \T(\Qbar)$, we have $\cl_{\G_A}'(t_1)=\cl_{\G_A}'(t_2)$ if and only if $t_2=\beta(t_1)$ for some $\beta\in \Gamma$.    So using $\cl_{\G_A}'$, we find that the variety $\Conj'(\G_A)_\Qbar$ is the quotient of the torus $\T_\Qbar$ by $\Gamma$.

Observe that $W(\G_A,\T)$ is a normal subgroup of $\Gamma$.    The following technical lemma will be important later.

\begin{lemma}  \label{L:new Jordan}
Suppose $H$ is a subgroup of $\Gamma$ such that $H\cap C \neq \emptyset$ for each conjugacy class $C$ of $\Gamma$ contained in $W(\G_A,\T)$.   Then $H\supseteq W(\G_A,\T)$.
\end{lemma}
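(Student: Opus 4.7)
The plan is to reduce, via the special structure of $\Gamma$, to the classical theorem of Jordan that a proper subgroup of a finite group cannot meet every conjugacy class. First I would record the key structural fact: the quotient $\Gamma/W(\G_A,\T)$ is an elementary abelian $2$-group. Indeed, by the construction of $\calA$, each $f\in\calA$ acts on every simple factor $\mathbf{H}_i$ of $\G_A^\der$ as conjugation by some element of $\mathbf{H}_i'$, so modulo inner automorphisms of $\G_{A,\Qbar}$ the group $\calA$ embeds into $\prod_{i\in I}\mathbf{H}_i'/\mathbf{H}_i$.  The quotients $\mathbf{H}_i'/\mathbf{H}_i$ are trivial for $i\notin J$ and isomorphic to $\mathbf{O}(2k_i)/\SO(2k_i)\cong\ZZ/2\ZZ$ for $i\in J$, so the ambient product, and hence $\Gamma/W(\G_A,\T)$ after restricting to $\T_{\Qbar}$, is $2$-elementary abelian.

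Next set $K:=H\cap W(\G_A,\T)$; the conclusion $W(\G_A,\T)\subseteq H$ is equivalent to $K=W(\G_A,\T)$. The hypothesis is equivalent to the identity
\[
\bigcup_{\gamma\in\Gamma}\gamma K\gamma^{-1}\;=\;W(\G_A,\T),
\]
because any element of $H$ lying in a $\Gamma$-conjugacy class contained in $W(\G_A,\T)$ automatically lies in $K$. In the principal case $H\cdot W(\G_A,\T)=\Gamma$, I argue that the set $\tilde K:=\bigcup_{w\in W(\G_A,\T)}wKw^{-1}$ is invariant under $\Gamma$-conjugation: it is $W(\G_A,\T)$-invariant by construction, and it is $H$-invariant because $K$ is normal in $H$ (it is the intersection of $H$ with the normal subgroup $W(\G_A,\T)$ of $\Gamma$). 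Consequently the displayed union collapses to $\tilde K$, forcing $\tilde K=W(\G_A,\T)$; the classical Jordan theorem applied to $K\subseteq W(\G_A,\T)$ then yields $K=W(\G_A,\T)$.

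The remaining case, in which $\Gamma':=H\cdot W(\G_A,\T)$ is a proper subgroup of $\Gamma$, is the main obstacle. One would like to apply the principal case with $\Gamma$ replaced by $\Gamma'$, but this requires $H$ to meet every $\Gamma'$-conjugacy class in $W(\G_A,\T)$, whereas a single $\Gamma$-class may decompose into several $\Gamma'$-classes. To handle this I would use the $2$-elementary abelian structure of $\Gamma/W(\G_A,\T)$ together with the explicit description of a nontrivial coset representative: an outer involution acts on the Weyl group $W(\mathbf{H}_i)$ via the action of $\mathbf{O}(2k_i)/\SO(2k_i)$, whose effect on conjugacy classes of $W(\mathbf{H}_i)$ is computable. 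I would then argue by induction on $[\Gamma:\Gamma']$, at each step enlarging $H$ by a carefully chosen order-two element in a nontrivial $\Gamma/\Gamma'$-coset without disturbing the class-meeting hypothesis, until one is reduced to the principal case already treated.
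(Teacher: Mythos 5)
Your ``principal case'' argument is correct: when $H\cdot W(\G_A,\T)=\Gamma$, the set $\tilde K=\bigcup_{w\in W}wKw^{-1}$ with $K=H\cap W(\G_A,\T)$ is normalized by both $W(\G_A,\T)$ and $H$ (the latter because $K\trianglelefteq H$), hence by $\Gamma$, so the hypothesis forces $\tilde K=W(\G_A,\T)$ and Jordan's theorem applied inside $W(\G_A,\T)$ gives $K=W(\G_A,\T)$. However, this case is essentially vacuous for the lemma: as the paper observes at the outset, one may always replace $H$ by $H\cap W(\G_A,\T)$ without harming the hypothesis, and after this normalization $H\cdot W(\G_A,\T)=W(\G_A,\T)$, which equals $\Gamma$ only in the trivial situation $\Gamma=W(\G_A,\T)$ (where the statement is exactly Jordan). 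All the genuine content of the lemma lives in your ``remaining case.''

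There the proposed induction has a real gap. If you enlarge $H$ to $H'=\langle H,g\rangle$ and then conclude (via the principal case) that $W(\G_A,\T)\subseteq H'$, you have learned nothing about the original $H$: the group $H'$ can easily meet $W(\G_A,\T)$ in a strictly larger subgroup than $H$ does, so $W(\G_A,\T)\subseteq H'$ does not yield $W(\G_A,\T)\subseteq H$. To make such an induction work one would need to arrange $H'\cap W(\G_A,\T)=H\cap W(\G_A,\T)$ while simultaneously increasing $H'\cdot W(\G_A,\T)$, and the sketch gives no mechanism for this; nor does it engage with the actual difficulty, which is that outer involutions of a $D_n$ factor genuinely fuse distinct $W(\Phi_i)$-conjugacy classes, so the class-meeting hypothesis for $\Gamma$ is strictly weaker than the one Jordan needs. (Your structural observation that $\Gamma/W(\G_A,\T)$ is an elementary abelian $2$-group is correct but is not exploited by the argument you wrote.)

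The paper takes a different route: after the reduction to $H\subseteq W(\G_A,\T)$, it decomposes the root system $\Phi$ of $\G_A$ into irreducible components $\Phi_i$, identifies $W(\G_A,\T)$ with $\prod_i W(\Phi_i)$ and $\Gamma$ with a subgroup of $\prod_i\Gamma_i$ where $[\Gamma_i:W(\Phi_i)]\le 2$, and reduces to showing $W(\Phi_i)\subseteq H_i$ for each $i$. When $\Gamma_i=W(\Phi_i)$ this is exactly Jordan; the only other possibility is a $D_n$ component with $\Gamma_i=\Aut(\Phi_n)\supsetneq W(D_n)$ (up to conjugacy when $n=4$), and here the paper carries out an explicit signed-permutation computation: project to $S_n$, apply Jordan to conclude the image of $H$ is all of $S_n$, and then generate the kernel $N'$ from a single sign-change $f_B$ with $|B|=2$. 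If you want to complete your argument you will have to do something of this concrete nature for the $D_n$ case rather than rely on the proposed enlargement/induction.
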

\begin{proof}
Since $W(\G_A,\T)$ is a normal subgroup of $\Gamma$, there no harm in replacing $H$ by $H\cap W(\G_A,\T)$; thus without loss of generality, we may assume that $H$ is a subgroup of $W(\G_A,\T)$.

Let $\Phi:=\Phi(\G_A,\T)\subseteq X(\T)$ be the set of roots of $\G_A$ with respect to $\T$, cf.~\cite{MR1102012}*{\S8.17}.  The set of roots with the embedding $\Phi \hookrightarrow X(\T/\mathbf{C})\otimes_\ZZ \RR$ form an abstract root system.  The root system $\Phi$ is the disjoint union of its irreducible components $\{\Phi_i\}_{i\in I}$, where the root systems $\Phi_i$ correspond with our subgroups $\mathbf{H}_i$.    

We can identify $\Gamma$ with a subgroup of $\Aut(X(\T))$.  For $f\in \Gamma$, we have $f(\Phi)=\Phi$; moreover, $f(\Phi_i)=\Phi_i$ for $i\in I$.   For each $i\in I$, we have a homomorphism $\Gamma\to \Aut(\Phi_i), f\mapsto f|_{\Phi_i}$ whose image we denote by $\Gamma_i$.   Let $W(\Phi_i)$ be the Weyl group of $\Phi_i$; it is a subgroup of index at most 2 in $\Gamma_i$.  If $i\notin J$, then we have $\Gamma_i=W(\Phi_i)$.  The group $W(\G_A,\T)$ acts faithfully on $\Phi$, and one can then check that $\Gamma$ also acts faithfully on $\Phi$.   Therefore, the natural map $\Gamma \to \prod_{i\in I} \Gamma_i \subseteq \Aut(\Phi)$ is injective and $W(\G_A,\T)$ is mapped to the Weyl group $\prod_{i\in I}W(\Phi_i) =W(\Phi)$.  We may thus identify $H$ with a subgroup of $\prod_{i\in I} \Gamma_i$.  Let $H_i$ be the group of $(f_j)_{j\in I} \in \prod_{j\in I} \Gamma_j$ which belong to $H$ and satisfy $f_j=1$ for $j\neq i$.    We have $\prod_{i\in I}H_i \subseteq H\subseteq W(\Phi)$, so it suffices to show that $W(\Phi_i)\subseteq H_i$ for every $i\in I$.   

Fix any $i\in I$.  From our assumptions on $H$, we find that $H_i$ is a subgroup of $W(\Phi_i)$ such that $H_i \cap C\neq \emptyset$ for every conjugacy classes $C$ of $\Gamma_i$ that is contained in $W(\Phi_i)$.     If $\Gamma_i=W(\Phi_i)$, then we have $H_i=W(\Phi_i)$ by Jordan's lemma.       It remains to consider the case where $\Gamma_i\neq W(\Phi)$.\\ 

We have reduced the lemma to the following situation:  Let $\Phi$ be an irreducible root system of type $D_n$ with $n\geq 4$.  Let $\Gamma$ be a subgroup of $\Aut(\Phi)$ that contains $W(\Phi)$ and satisfies $[\Gamma:W(\Phi)]=2$.   Let $H$ be a subgroup of $W(\Phi)$ that satisfies $H\cap C\neq \emptyset$ for all conjugacy classes $C$ of $\Gamma$ contained in $W(\Phi)$.   We need to show that $H=W(\Phi)$.

We can identify the root system $\Phi$ with the set of vectors $\pm e_i  \pm e_j$ with $1\leq i< j\leq n$ in $\RR^n$, where $e_1,\ldots, e_n$ is the standard basis of $\RR^n$.   Let $\Gamma'$ be the group of automorphisms $f$ of the vector space $\RR^n$ such that for each $1\leq i \leq n$, we have $f(e_i)=\varepsilon_i e_{j}$ for some $j\in\{1,\ldots, n\}$ and $\varepsilon_i\in\{\pm 1\}$.  Ignoring the signs, each $f\in \Gamma'$ gives a permutation of $\{1,\ldots, n\}$; this defines a short exact sequence
\[
1\to N \to \Gamma' \xrightarrow{\varphi} S_n \to 1
\]
where the group $N$ consists of those $f\in \Gamma'$ that satisfy $f(e_i)=\pm e_i$ for all $1\leq i \leq n$.   The Weyl group $W(\Phi)$ is the subgroup of index 2 in $\Gamma'$ consisting of those $f$ for which $\prod_{i=1}^n \varepsilon_i=1$.     We may assume that $\Gamma=\Gamma'$;  for $n>5$, this is because $\Gamma'=\Aut(\Phi)$ (for $n=4$, the subgroups of $\Aut(\Phi)$ that contain $W(\Phi)$ as an index subgroup of order 2 are all conjugate to $\Gamma'$).   Restricting $\varphi$ to $W(\Phi)$, we have a short exact sequence
\[
1\to N' \to W(\Phi) \xrightarrow{\varphi|_{W(\Phi)}} S_n \to 1
\]
where $N'$ is the group of $f\in N$ for which $f(e_i)=\varepsilon_i e_i$ and $\prod_i \varepsilon_i = 1$.  Since $\varphi(\Gamma)=\varphi(W(\Phi))=S_n$, our assumption on $H$ implies that $\varphi(H)\cap C \neq \emptyset $ for each conjugacy class $C$ of $S_n$.   We thus have $\varphi(H)=S_n$ by Jordan's lemma.  It thus suffices to prove that $H\supseteq N'$.   

For a subset $B\subseteq \{1,\ldots, n\}$ with cardinality 2, we let $f_B$ be the element of $N'$ for which $f_B(e_i)=-e_i$ if $i\in B$ and $f_B(e_i)=e_i$ otherwise.     For $g\in \Gamma$, we have $g f_B g^{-1} = f_{\sigma(B)}$ where $\sigma:=\varphi(g)$.   Therefore, $H$ contains an element of the form $f_B$ for some set $B\subseteq \{1,\ldots, n\}$ with cardinality 2 (such functions form a conjugacy class of $\Gamma$ in $W(\Phi)$).   Since $\varphi(H)=S_n$, we deduce that $H$ contains all the $f_B$ with $|B|=2$, and hence $H\supseteq N'$ (since $N'$ is generated by such $f_B$).
\end{proof} 

\section{Local representations}
\label{S:local Galois}

Fix a non-zero abelian variety $A$ defined over a number field $K$ such that $K^\conn_A=K$.        Fix a prime $\ell$ and suppose that $\calG_{A,\ell}$ is a reductive group scheme over $\ZZ_\ell$ which has a split maximal torus $\calT$.   Denote the generic fiber of $\calT$ by $\T$; it is a maximal torus of $\G_{A,\ell}$.\\

Take any place $v\in \calS_A$ that satisfies $v\nmid \ell$.  Define the set
\[
\mathcal{I}_{v,\ell}:= \big\{ t \in \T(\Qbar_\ell) : t \text{ and } \rho_{A,\ell}(\Frob_v) \text{ are conjugate in } \G_{A,\ell}(\Qbar_\ell)\big\}
\]
and fix an element $t_{v,\ell} \in \mathcal{I}_{v,\ell}$.  Conjugation induces an action of the Weyl group $W(\G_{A,\ell},\T)$ on $\mathcal{I}_{v,\ell}$.   Since $v$ belongs to $\calS_A$, we find that the group generated by $t_{v,\ell}$ is Zariski dense in $\T_{\Qbar_\ell}$ and hence the action of $W(\G_{A,\ell},\T)$ on $\mathcal{I}_{v,\ell}$ is simply transitive.

Since $\G_{A,\ell}$ and $\rho_{A,\ell}(\Frob_v)$ are defined over $\QQ_\ell$, we also have a natural action of $\Gal_{\QQ_\ell}$ on $\mathcal{I}_{v,\ell}$.    So for each $\sigma\in \Gal_{\QQ_\ell}$, there is a unique $\psi_{v,\ell}(\sigma)\in W(\G_{A,\ell},\T)$ that satisfies $\sigma(t_{v,\ell})=\psi_{v,\ell}(\sigma)^{-1}(t_{v,\ell})$.    Using that $\T$ is split, one can show that that map
\[
\psi_{v,\ell}\colon \Gal_{\QQ_\ell}\to W(\G_{A,\ell},\T),\quad \sigma\mapsto \psi_{v,\ell}(\sigma)
\]
is a group homomorphism.  Note that a different choice of $t_{v,\ell}$ would alter $\psi_{v,\ell}$ by an inner automorphism of $W(\G_{A,\ell},\T)$.   

Choose an embedding $\Qbar\subseteq \Qbar_\ell$.   The homomorphism $\psi_{v,\ell}$ then factors through an injective group homomorphism $\Gal(\QQ_\ell(\calW_{A_v})/\QQ_\ell) \hookrightarrow W(\G_{A,\ell},\T)$.

\begin{lemma}  \label{L:sieving input}
Fix a subset $C$ of $W(\G_{A,\ell},\T)$ that is stable under conjugation.    There is a subset $U_\ell$ of $\bbar\rho_{A,\ell}(\Gal_K)$ which is stable under conjugation and satisfies the following properties:
\begin{itemize}
\item If $v\in \calS_A$ satisfies $v\nmid \ell$ and $\bbar\rho_{A,\ell}(\Frob_v) \subseteq U_\ell$, then $\psi_{v,\ell}$ is unramified and $\psi_{v,\ell}(\Frob_\ell)\subseteq C$.
\item Let $K'$ be a finite extension of $K$ and let $\kappa$ be a coset of $\Gal_{K'}$ in $\Gal_K$.  Then we have
\[
\frac{|\bbar{\rho}_{A,\ell}(\kappa) \cap U_\ell|}{|\bbar{\rho}_{A,\ell}(\kappa)|} = \frac{|C|}{|W(\G_{A,\ell},\T)|} + O(1/\ell)
\]
where the implicit constant depends only on $A$ and $K'$.
\end{itemize}
\end{lemma}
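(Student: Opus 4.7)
The plan is to define $U_\ell$ in terms of the special fiber of $\calG_{A,\ell}$ and the map $\theta$ from \S\ref{SS:tori over finite fields}. Write $\bar\calG := \calG_{A,\ell}\times_{\ZZ_\ell}\FF_\ell$ for the (split reductive) special fiber and $\bar\calT := \calT\times_{\ZZ_\ell}\FF_\ell$ for the corresponding split maximal torus. Smoothness of $\calG_{A,\ell}$ over $\ZZ_\ell$ with connected fibers gives a canonical identification $W(\G_{A,\ell},\T)=W(\bar\calG,\bar\calT)$, under which I view $C$ as a conjugation-stable subset of $W(\bar\calG,\bar\calT)$. I then set
\[
U_\ell := \bigl\{\,g\in\bbar\rho_{A,\ell}(\Gal_K) : g\in\bar\calG(\FF_\ell)_{sr}\text{ and }\theta_{\bar\calG}(g)\subseteq C\,\bigr\},
\]
whose conjugation-stability is inherited from that of $C$.

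For part (i), suppose $v\in\calS_A$ satisfies $v\nmid\ell$ and $\bbar\rho_{A,\ell}(\Frob_v)\in U_\ell$. Put $g:=\rho_{A,\ell}(\Frob_v)\in\calG_{A,\ell}(\ZZ_\ell)$, with reduction $\bar g\in\bar\calG(\FF_\ell)_{sr}$. Standard reductive group scheme theory (SGA~3) shows that the centralizer $\calT':=Z_{\calG_{A,\ell}}(g)$ is a maximal torus of $\calG_{A,\ell}$ defined over $\ZZ_\ell$, whose special fiber is the unique maximal torus of $\bar\calG$ containing $\bar g$. Since every torus over the strictly Henselian ring $\ZZ_\ell^{\un}$ is split, I may choose $h\in\calG_{A,\ell}(\ZZ_\ell^{\un})$ with $h\calT'_{\ZZ_\ell^{\un}}h^{-1}=\calT_{\ZZ_\ell^{\un}}$. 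Taking $t_{v,\ell}:=hgh^{-1}\in\T(\QQ_\ell^{\un})\subseteq\mathcal{I}_{v,\ell}$, a direct cocycle calculation using $\sigma(g)=g$ gives
\[
\sigma(t_{v,\ell}) = \bigl(\sigma(h)h^{-1}\bigr)\, t_{v,\ell}\, \bigl(\sigma(h)h^{-1}\bigr)^{-1}\qquad(\sigma\in\Gal_{\QQ_\ell}),
\]
so $\psi_{v,\ell}(\sigma)$ is represented by $h\sigma(h)^{-1}\in N_{\calG_{A,\ell}}(\calT)(\ZZ_\ell^{\un})$. Because $h\in\calG_{A,\ell}(\ZZ_\ell^{\un})$, inertia fixes $h$ and $\psi_{v,\ell}$ is unramified; moreover the reduction mod $\ell$ of $h\,\Frob_\ell(h)^{-1}$ is a representative of $\theta_{\bar\calG}(\bar\calT')=\theta_{\bar\calG}(\bar g)\subseteq C$, by the definition in \S\ref{SS:tori over finite fields}. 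Thus $\psi_{v,\ell}(\Frob_\ell)\subseteq C$, as required.

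For part (ii), I apply Lemma~\ref{L:JKZ estimate} to $\bar\calG$ with the subset $\kappa':=\bbar\rho_{A,\ell}(\kappa)\subseteq\bar\calG(\FF_\ell)$. For all sufficiently large $\ell$ (the finitely many small $\ell$ are absorbed into the $O(1/\ell)$ constant), the image $\bbar\rho_{A,\ell}(\Gal_{K'})$ contains the commutator subgroup of $\bar\calG(\FF_\ell)$---the analogue for $A/K'$ of Proposition~\ref{P:image of Galois mod ell}(iii)---so $\kappa'$ is a union of cosets of $[\bar\calG(\FF_\ell),\bar\calG(\FF_\ell)]$. By the very definition of $U_\ell$,
\[
\bbar\rho_{A,\ell}(\kappa)\cap U_\ell = \bigl\{\,g\in\kappa'\cap\bar\calG(\FF_\ell)_{sr} : \theta_{\bar\calG}(g)\subseteq C\,\bigr\},
\]
and Lemma~\ref{L:JKZ estimate} yields the ratio $|C|/|W(\G_{A,\ell},\T)|+O(1/\ell)$, with the implicit constant depending only on the type of $\bar\calG$ (hence only on $A$) and on the bounded index $[\bar\calG(\FF_\ell):\bbar\rho_{A,\ell}(\Gal_{K'})]$ (which depends on $K'$).

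The main obstacle is the torus-lifting step in part~(i): one must verify that regular semisimple reduction of $g$ forces its centralizer to be a maximal torus $\calT'$ already over $\ZZ_\ell$ that becomes split over $\ZZ_\ell^{\un}$, and then carry out the cocycle computation so that the resulting conjugacy class in $W(\G_{A,\ell},\T)$ matches $\theta_{\bar\calG}(\bar g)$ under the natural identification. Once those identifications are in place, part~(ii) is a direct reduction to the equidistribution Lemma~\ref{L:JKZ estimate}.
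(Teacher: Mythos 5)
Your proof is correct and follows essentially the same route as the paper: you define $U_\ell$ via the special fiber as the regular semisimple elements with $\theta_{\bar\calG}(g)\subseteq C$, lift the centralizer torus and conjugate it into $\calT$ over $\ZZ_\ell^{\un}$ (which you phrase via ``tori over a strictly Henselian ring are split and hence conjugate,'' while the paper makes this explicit via the smooth transporter scheme and Hensel's lemma), carry out the identical cocycle computation showing $\psi_{v,\ell}(\Frob_\ell)$ is represented by $h\Frob_\ell(h)^{-1}$, and conclude with Lemma~\ref{L:JKZ estimate} and Proposition~\ref{P:image of Galois mod ell}(iii). One small imprecision: the identification $W(\G_{A,\ell},\T)\cong W(\bar\calG,\bar\calT)$ comes from $\calG_{A,\ell}$ being a split \emph{reductive} group scheme over $\ZZ_\ell$ (so the Weyl group scheme $N_{\calG}(\calT)/\calT$ is finite \'etale, hence constant), not merely from smoothness with connected fibers; this is harmless here since reductivity is the standing hypothesis of \S\ref{S:local Galois}.
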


\begin{proof}
Set $\calG:=\calG_{A,\ell}$; it is a reductive group scheme over $\ZZ_\ell$ by assumption.    The special fiber $\calG_{\FF_\ell}$ is a reductive group with split maximal torus $\calT_{\FF_\ell}$.   Assuming $\ell$ is sufficiently large, the derived subgroups of $\calG_{\FF_\ell}$ and $\calG_{\QQ_\ell}=\G_{A,\ell}$ are of the same Lie type; this follows from \cite{MR1944805}*{Th\'eor\`eme~2}.    Note that we can set $U_\ell=\emptyset$ for the finitely many excluded primes.   Therefore, the Weyl groups $W(\G_{A,\ell},\T)$ and $W(\calG_{\FF_\ell},\calT_{\FF_\ell})$ are abstractly isomorphic; we now describe an explicit isomorphism.  The homomorphism
\begin{equation} \label{E:Weyl isom 1}
    N_{\calG}(\calT )(\ZZ_\ell)/\calT (\ZZ_\ell)  \hookrightarrow  N_{\calG}(\calT )(\QQ_\ell)/\calT (\QQ_\ell) =
    W(\calG_{\QQ_\ell},\calT _{\QQ_\ell}) = W(\G_{A,\ell},\T)
\end{equation}
is injective; the identification with the Weyl group use that $\calT _{\QQ_\ell}=\T$ is split.  The normalizer $N_{\calG}(\calT )$ is a closed and smooth subscheme of $\calG$; for smoothness, see~\cite[XXII~Corollaire~5.3.10]{SGA3}.  The homomorphisms $N_{\calG}(\calT )(\ZZ_\ell)\to N_{\calG}(\calT )(\FF_{\ell})$ and $\calT (\ZZ_\ell)\to \calT (\FF_{\ell})$ are thus surjective by Hensel's lemma, and we obtain a  
 a surjective homomorphism
\begin{equation} \label{E:Weyl isom 2}
  N_{\calG}(\calT  )(\ZZ_{\ell})/\calT  (\ZZ_{\ell}) \twoheadrightarrow N_{\calG_{\FF_{\ell}}}(\calT_{\FF_{\ell}})(\FF_{\ell})/\calT_{\FF_{\ell}}(\FF_{\ell}) = W(\calG_{\FF_{\ell}},\calT_{\FF_{\ell}}).
\end{equation}
Since (\ref{E:Weyl isom 1}) and (\ref{E:Weyl isom 2}) are injective and surjective homomorphisms, respectively, into isomorphic groups, we deduce that they are both isomorphisms.  Combining the isomorphisms (\ref{E:Weyl isom 1}) and (\ref{E:Weyl isom 2}), we obtain the desired isomorphism
$W(\G_{A,\ell},\T) \xrightarrow{\sim} W(\calG_{\FF_{\ell}},\calT_{\FF_{\ell}})$.  

Now fix a place $v\in \calS_A$, and let $h\in \calG(\ZZ_\ell)$ be a representative of the conjugacy class $\rho_{A,\ell}(\Frob_v)$.   We know that $h$ is semisimple and regular in $\calG_{\QQ_\ell}=\G_{A,\ell}$.  Assume further that the image $\bbar{h}$ in $\calG(\FF_\ell)$ is semsimple and regular.   The centralizer $\calT_h$ of $h$ in $\calG$ is then a smooth and closed subscheme whose generic and special fibers are both maximal tori, i.e., $\calT_h$ is a maximal torus of $\calG$.   The transporter $\textrm{Transp}_{\calG}(\calT_h ,
\calT)$ is a closed and smooth group scheme in $\calG$; again for smoothness, see~\cite[XXII~Corollaire 5.3.10]{SGA3}.  Recall
that for any $\ZZ_\ell$-algebra $R$, we have
\[
\textrm{Transp}_{\calG}(\calT_h  , \calT )(R) = \{ g \in \calG(R) : g\,\calT_{h,R}\, g^{-1} = \calT_{R} \}.
\]
Choose any point $\overline{g} \in \textrm{Transp}_{\calG}(\calT_h,\calT )(\FFbar_{\ell})$.  Let $\ZZ_\ell^{un}$ be the ring of integers in the maximal unramified extension of $\QQ_\ell$ in $\Qbar_\ell$.  Since $\textrm{Transp}_{\calG}(\calT_h,\calT)$ is smooth and $\ZZ_{\ell}^{un}$ is Henselian, there is an $g \in \textrm{Transp}_{\calG}(\calT_h,\calT)(\ZZ_{\ell}^{un})$ which lifts $\overline{g}$.    The element  $g\Frob_\ell(g)^{-1}$ belongs to $N_{\calG}(\calT)(\ZZ_\ell^{un})$ and under the reduction map it is sent to $\bbar{g}\Frob_\ell(\bbar{g})^{-1}\in N_{\calG_{\FF_\ell}}(\calT_{\FF_\ell})(\FFbar_\ell)$.   The element of $W(\calG_{\FF_\ell},\calT_{\FF_\ell})$ represented by $\bbar{g}\Frob_\ell(\bbar{g})^{-1}$ belongs to the conjugacy class $\theta_{\calG_{\FF_\ell}}(\bbar{h})$ as in \S\ref{SS:tori over finite fields}.  Define $t:= g h g^{-1}$; it is an element of the set $\mathcal{I}_{v,\ell}$.  We have 
\[
\Frob_\ell(t)= \Frob_\ell(g) h \Frob_\ell(g)^{-1} = (g \Frob_\ell(g)^{-1})^{-1}\cdot t \cdot (g \Frob_\ell(g)^{-1})
\]
since $h$ is defined over $\QQ_\ell$.   Therefore, the conjugacy class of $\psi_{v,\ell}(\Frob_\ell)$ in $W(\calG_{\QQ_\ell}, \calT_{\QQ_\ell})=W(\G_{A,\ell},\T)$ is represented by $g\Frob_\ell(g)^{-1}$.  Since $g$ is defined over $\ZZ_\ell^{un}$, we deduce that $\psi_{v,\ell}$ is unramified at $\ell$.    With respect to our isomorphism $W(\G_{A,\ell},\T)=W(\calG_{\QQ_\ell},\calT_{\QQ_\ell}) \cong W(\calG_{\FF_\ell},\calT_{\FF_\ell})$, we find that $\psi_{v,\ell}(\Frob_\ell)$ lies in the conjugacy class $\theta_{\calG_{\FF_\ell}}(\bbar\rho_{A,\ell}(\Frob_v))$ of $W(\G_{A,\ell},\T)$.

Let $U_\ell$ be the set of $h\in \bbar\rho_{A,\ell}(\Gal_K)$ that are semisimple and regular in $\calG_{\FF_\ell}$ and satisfy $\theta_{\calG_{\FF_\ell}}(h)\subseteq C$; it is stable under conjugation by $\bbar\rho_{A,\ell}(\Gal_K)$.   If $v\in \calS_A$ satisfies $v\nmid \ell$ and $\bbar\rho_{A,\ell}(\Frob_v) \subseteq U_\ell$, then the above work shows that $\psi_{v,\ell}$ is unramified and $\psi_{v,\ell}(\Frob_\ell)$ lies in the conjugacy class $\theta_{\calG_{\FF_\ell}}(\bbar\rho_{A,\ell}(\Frob_v))\subseteq C$.

It remains to show that $U_\ell$ satisfies the second property in the statement of the lemma.   Proposition~\ref{P:image of Galois mod ell}(iii) tells us that $\bbar\rho_{A,\ell}(\Gal_{K'})$ contains the commutator subgroup of $\calG_{\FF_\ell}(\FF_\ell)$ for $\ell$ sufficiently large.   For such primes $\ell$, $\bbar{\rho}_{A,\ell}(\kappa)$ consists of cosets of the commutator subgroup of $\calG_{\FF_\ell}(\FF_\ell)$, and hence 
\[
\frac{|\bbar{\rho}_{A,\ell}(\kappa) \cap U_\ell|}{|\bbar{\rho}_{A,\ell}(\kappa)|} = \frac{|C|}{|W(\calG_{\FF_\ell},\calT_{\FF_\ell})|} + O(1/\ell)=\frac{|C|}{|W(\G_{A,\ell},\T)|} + O(1/\ell)
\]
by Lemma~\ref{L:JKZ estimate} where the implicit constant depends only on $A$ and $K'$ (the dimension of $\G_{A,\ell}$ is bounded in terms of $\dim A$, and hence there are only finite many possible Lie types for the groups $\G_{A,\ell}$ as $\ell$ varies).
\end{proof}

\section{Proofs of Theorem~\ref{T:main} and Theorem~\ref{T:Weil under MT}}
\label{S:proof main}

Fix an {absolutely simple} abelian variety $A$ defined over a number field $K$.  We have assumed that $K_A^\conn=K$; equivalently, all the groups $\G_{A,\ell}$ are connected.  Fix an embedding $K\subseteq\CC$ and let $\G_A\subseteq \GL_V$ be the Mumford-Tate group of $A$ where $V=H_1(A(\CC),\QQ)$.  Fix a maximal torus $\T$ of $\G_A$.  Let $\calS_A$ be the set of places from \S\ref{SS:calSA}.  We shall assume that the Mumford-Tate conjecture for $A$ holds starting in \S\ref{SS:Galois action}.

\subsection{Weights}
We first describe some properties of the representation $\G_A \hookrightarrow \GL_V$.   We will use the group theory of \cite{MR563476}*{\S3} (the results on \emph{strong Mumford-Tate pairs} in \cite{MR1603865}*{\S4} are also relevant).  

By Proposition~\ref{P:Faltings}(i), the commutant of $\G_A$ in $\End_{\QQ}(V)$ is naturally isomorphic to the ring $\Delta:=\End(A)\otimes_\ZZ \QQ$.  The ring $\Delta$ is a division algebra since $A$ is simple.  The center $E$ of $\Delta$ is a number field.   Define the integers $r:=[E:\QQ]$ and $m:=[\Delta:E]^{1/2}$.    The representation $\G_A \hookrightarrow \GL_V$ is irreducible since $\Delta$ is a division algebra.  

For each character $\alpha\in X(\T)$, let $V(\alpha)$ be the subspace of $V\otimes_\QQ \Qbar$ consisting of those vectors $v$ for which $t \cdot v = \alpha(t)v$ for all $t\in \T(\Qbar)$.    We say that $\alpha\in X(\T)$ is a \defi{weight} of $V$ if $V(\alpha)\neq 0$, and we denote the set of such weights by $\Omega$.  We have a decomposition $V\otimes_\QQ \Qbar =\oplus_{\alpha\in \Omega} V(\alpha)$, and hence 
\begin{equation} \label{E:weight decomp}
\det(xI - t) = \prod_{\alpha \in \Omega} (x-\alpha(t))^{m_\alpha}
\end{equation}
for each $t\in \T(\Qbar)$ where $m_\alpha:={\dim_{\Qbar} V(\alpha)}$ is the \defi{multiplicity} of $\alpha$.  The set $\Omega$ of weights is stable under the actions of $W(\G_A,\T)$ and $\Gal_\QQ$ on $X(\T)$, so $\Pi(\G_A,\T)$ also acts on $\Omega$. 

\begin{lemma} \label{L:Serre strong}
\begin{romanenum}
\item 
The representation $V \otimes_\QQ \Qbar$ of $\G_{A,\Qbar}$ is the direct sum of $r$ irreducible representations $V_1,\ldots, V_r$.  Let $\Omega_i\subseteq X(\T)$ be the set of weights of $V_i$.   Then $\Omega$ is the disjoint union of the sets $\Omega_1,\ldots,\Omega_r$.
\item 
The group $W(\G_A,\T)$ acts transitively on each set $\Omega_i$.   In particular, the action of $W(\G_A,\T)$ on $\Omega$ has $r$ orbits. 
\item \label{I:Serre strong ????}
The group $\Pi(\G_A,\T)$ acts transitively on $\Omega$. 
\item  \label{I:Serre strong, m}
For each $\alpha\in \Omega$, we have $m_\alpha=m$.
\end{romanenum}
\end{lemma}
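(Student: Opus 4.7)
The plan is to exploit the action of the center $E$ of $\Delta := \End(A) \otimes_\ZZ \QQ$ on $V$ to decompose $V \otimes_\QQ \Qbar$, and then to use the minuscule nature of the Hodge cocharacter to get Weyl-transitivity on each piece.

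First I would decompose $V \otimes_\QQ \Qbar$ using the $E$-action. Since $E \subseteq \Delta = \End_{\G_A}(V)$ commutes with $\G_A$, the $E$-action is $\G_A$-equivariant. Extending scalars, $E \otimes_\QQ \Qbar \cong \prod_{i=1}^{r} \Qbar$ via the $r$ embeddings $\sigma_i \colon E \hookrightarrow \Qbar$, and the corresponding orthogonal idempotents give a $\G_{A,\Qbar}$-stable decomposition $V \otimes_\QQ \Qbar = \bigoplus_{i=1}^{r} V_i$ with $V_i$ the $\sigma_i$-eigenspace. The commutant of $\G_{A,\Qbar}$ on $V_i$ is $\Delta \otimes_{E,\sigma_i} \Qbar \cong M_m(\Qbar)$, so by the double centralizer theorem $V_i \cong W_i^{\oplus m}$ for some irreducible $\G_{A,\Qbar}$-module $W_i$, and the $W_i$ are pairwise non-isomorphic (otherwise $\End_{\G_{A,\Qbar}}(V_i \oplus V_j) \cong M_{2m}(\Qbar)$ would strictly exceed the block summand $M_m(\Qbar) \times M_m(\Qbar)$ of the full commutant $\Delta \otimes_\QQ \Qbar$). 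This yields (i); once one also knows $\Omega_i \cap \Omega_j = \emptyset$, it yields (iv), since each weight of $W_i$ appears with multiplicity $m$ in $V_i$. I would argue disjointness using the distinct central characters of the $W_i$, or equivalently the combined commuting action of $\T \cdot E^\times$ separating the $V_i$.

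For (ii) I would invoke the minuscule property of the Hodge cocharacter $\mu \colon \GG_{m,\CC} \to \G_{A,\CC}$. By construction $\mu$ acts on $V \otimes_\QQ \CC = V^{-1,0} \oplus V^{0,-1}$ with weights only in $\{0,-1\}$, so after conjugating $\mu$ into a maximal torus lying above $\T$ we have $\langle \alpha, \mu \rangle \in \{0,-1\}$ for every $\alpha \in \Omega$. Restricted to an irreducible constituent $W_i$ this constraint forces $W_i$ to be a minuscule representation of $\G_{A,\Qbar}$ in the sense of Serre \cite{MR563476}*{\S3} (compare also the strong Mumford-Tate pair formalism in \cite{MR1603865}*{\S4}), and the Weyl group acts transitively on the weight set of any such representation. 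For (iii) I would then combine (ii) with the observation that $\varphi_\T(\Gal_\QQ)$ permutes the embeddings $\sigma_i \colon E \hookrightarrow \Qbar$ transitively, hence permutes the $\Omega_i$ transitively: any two elements of $\Omega$ can be brought into a common $\Omega_i$ by some $\varphi_\T(\sigma)$ and then identified by $W(\G_A, \T)$, giving transitivity of $\Pi(\G_A, \T)$ on $\Omega$.

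The main obstacle will be the minuscule assertion in (ii): one must carefully unwind Serre's analysis of Hodge-type representations to conclude that the restriction of $\mu$ to each irreducible factor $W_i$ makes $W_i$ minuscule (and not merely quasi-minuscule, where the Weyl group would have extra fixed weights). A secondary technical point is the disjointness of the $\Omega_i$ in step (i), which depends on the $\T$-weight decomposition being fine enough to refine the $E$-eigenspace decomposition.
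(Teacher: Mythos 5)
Your approach mirrors what Serre does in \cite{MR563476}*{\S3.2}, which is exactly what the paper cites without expansion, so the overall route is the same: decompose $V\otimes_\QQ\Qbar$ through the action of $E\otimes_\QQ\Qbar$, identify the isotypic blocks, and use the minuscule Hodge cocharacter to get Weyl-transitivity on each block. One small bookkeeping point: the $r$ \emph{irreducible} representations $V_i$ of the lemma correspond to your $W_i$, not your $E$-eigenspaces (which are $W_i^{\oplus m}$); the statement of the lemma is really about the $r$ isomorphism classes, as your computation $\Delta\otimes_\QQ\Qbar\cong\prod_i M_m(\Qbar)$ correctly exhibits.

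The genuine soft spot is your treatment of disjointness of the $\Omega_i$. The suggestion to use ``distinct central characters of the $W_i$'' is not self-contained: it is not clear a priori that $Z(\G_{A,\Qbar})$ separates the $E\otimes\Qbar$-eigenspaces, and the observation that distinct minuscule irreducibles of a semisimple group always have distinct central characters is itself a consequence of the minuscule structure, not an independent input. Likewise ``$\T\cdot E^\times$ separates the $V_i$'' is true but beside the point, since disjointness requires the $\T$-weight spaces alone to refine the $E$-eigenspace decomposition. The clean route is to reverse the logical order: first establish, as you do in (ii), that each $\Omega_i$ is a single $W(\G_A,\T)$-orbit (this is the minuscule assertion you correctly identify as the crux, and is exactly Serre's $\Omega=W(\G_A,\T)\cdot\Omega^+$ with $|\Omega^+|=r$). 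Two Weyl orbits of weights are either equal or disjoint, and each contains a unique dominant weight; since the $W_i$ are pairwise non-isomorphic, their highest weights are distinct, so the orbits $\Omega_i$ are pairwise disjoint. Disjointness and (iv) then fall out together. With that reordering your proposal is correct, and your flagging of the minuscule step as the main thing to extract from Serre is accurate — that is indeed the content of the citation.
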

\begin{proof} 
These properties all follow from the results of Serre in {\S3.2} (and in particular p.183) of \cite{MR563476}; note that the Mumford-Tate group $\G_A$ satisfies the hypotheses of that section.   Fix a Borel subgroup $\mathbf{B}$ of $\G_{A,\Qbar}$ that contains $\T$.  Serre shows that $\Omega= W(\G_A,\T)\cdot \Omega^+$ where $\Omega^+$ is the set of highest weights of the irreducible representations of $V\otimes_\QQ \Qbar$ (the notion of highest weight will depend on our choice of $\mathbf{B}$).    The set $\Omega^+$ has $r$ elements.   The sets $\Omega_1,\ldots, \Omega_r$ in the statement of the lemma are the orbits $W(\G_A,\T)\cdot \alpha$ with $\alpha\in \Omega^+$.  The group $\Gal_\QQ$ acts transitively on $\Omega_+$, so we find that $\Pi(\G_A,\T)$ acts transitively on $\Omega$.   That $\Pi(\G_A,\T)$ acts transitively on $\Omega$ implies that each weight $\alpha\in \Omega$ has the same multiplicity; Serre shows that it is $m$.
\end{proof}

We now give some basic arithmetic consequences of these geometric properties.   

\begin{lemma} \label{L:gamma details}
Let $L$ be an algebraically closed extension of $\Qbar$.  Fix a place $v\in\calS_A$ and an element $t\in \T(L)$ that satisfies $\det(xI-t)=P_{A_v}(x)$.  Then the map 
\[
\gamma\colon X(\T)\to\Phi_{A_v},\quad \alpha\mapsto \alpha(t)
\] 
is a well-defined homomorphism that satisfies $\gamma(\Omega)=\calW_{A_v}$.   The homomorphism $\gamma$ is surjective; it is an isomorphism if and only if the Mumford-Tate conjecture for $A$ holds.  
\end{lemma}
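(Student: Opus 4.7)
The plan is to prove the four assertions in turn, the main technical input being that $\Omega$ generates $X(\T)$ on the nose rather than merely up to finite index.

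To begin, note that by definition $\T(L)=\Hom(X(\T),L^{\times})$, so the assignment $\alpha\mapsto \alpha(t)$ is automatically a homomorphism of abelian groups from $X(\T)$ into $L^{\times}$. What remains is to show that its image lies in $\Phi_{A_v}\subseteq \Qbar^{\times}$, that $\gamma(\Omega)=\calW_{A_v}$, that $\gamma$ is surjective onto $\Phi_{A_v}$, and to pin down when $\gamma$ is an isomorphism. The identity $\gamma(\Omega)=\calW_{A_v}$ is immediate from the weight decomposition \eqref{E:weight decomp}: expanding $\det(xI-t)$ along $T$-weight spaces gives $\prod_{\alpha\in\Omega}(x-\alpha(t))^{m_\alpha}$, and equating this with $P_{A_v}(x)\in\ZZ[x]$ shows that the multisets of roots agree, so in particular $\{\alpha(t):\alpha\in\Omega\}=\calW_{A_v}$ as subsets of $\Qbar$.

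The crux of the well-definedness and surjectivity is the claim that $\Omega$ generates $X(\T)$ as an abelian group. The inclusion $\G_A\hookrightarrow \GL_V$ is faithful by construction, and therefore so is the restriction $\T\hookrightarrow \GL_V$. Suppose for contradiction that $M:=\langle \Omega\rangle$ were a proper subgroup of $X(\T)$; then $X(\T)/M$ is a non-zero finitely generated abelian group, and by the standard duality between diagonalizable groups and finitely generated abelian groups, $\Hom(X(\T)/M,\Qbar^{\times})$ is non-zero, which produces a non-trivial subgroup of $\T(\Qbar)$ annihilated by every $\alpha\in\Omega$ and hence acting trivially on $V\otimes_{\QQ}\Qbar$, contradicting faithfulness. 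Thus $\langle \Omega\rangle=X(\T)$, and applying $\gamma$ gives
\[
\gamma(X(\T))=\langle\gamma(\Omega)\rangle=\langle \calW_{A_v}\rangle=\Phi_{A_v},
\]
which simultaneously proves that $\gamma$ lands in $\Phi_{A_v}$ and that it is surjective.

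Finally, for the isomorphism criterion: since $v\in\calS_A$, the group $\Phi_{A_v}$ is free abelian, and $X(\T)$ is free abelian of rank $\dim\T=\rank\G_A$. A surjective homomorphism between free abelian groups of finite rank is an isomorphism if and only if the two ranks coincide, as the kernel is a torsion-free subgroup of $X(\T)$. The rank of $\Phi_{A_v}$ equals the common rank of the groups $\G_{A,\ell}$ by the definition of $\calS_A$, and by the criterion of \cite{MR1339927}*{Theorem~4.3} recalled in \S\ref{SS:MT conj} (using $K_A^\conn=K$ so $\G_{A,\ell}=\G_{A,\ell}^{\circ}$), this common rank equals $\rank\G_A$ precisely when the Mumford-Tate conjecture holds for $A$.

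I do not foresee any serious obstacle: once one insists on the correct lattice-theoretic statement that $\Omega$ generates $X(\T)$ (rather than the weaker fact that it spans $X(\T)\otimes\QQ$, which would leave $\gamma(X(\T))$ only inside the divisible hull of $\Phi_{A_v}$), everything reduces to elementary manipulations with characters and ranks, combined with the already-cited rank criterion for Mumford-Tate.
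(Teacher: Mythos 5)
Your proposal is correct and follows essentially the same route as the paper: identify $\gamma(\Omega)$ with $\calW_{A_v}$ via the weight decomposition \eqref{E:weight decomp}, use faithfulness of $\G_A\hookrightarrow\GL_V$ to see that $\Omega$ generates $X(\T)$, deduce well-definedness and surjectivity, and then reduce the isomorphism criterion to a rank comparison settled by the Larsen--Pink criterion. The only difference is cosmetic: where the paper asserts ``$\Omega$ generates $X(\T)$ since $\G_A$ acts faithfully on $V$'' in one line, you spell out the duality argument (a proper sublattice would yield a nontrivial diagonalizable subgroup of $\T_{\Qbar}$ acting trivially on $V\otimes_{\QQ}\Qbar$), which is exactly the intended justification.
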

\begin{proof}
The map $\alpha\mapsto \alpha(t)$ certainly gives a homomorphism $\gamma\colon X(\T)\to L^\times$.   We need to show that $\gamma$ has image in $\Phi_{A_v}$.    By (\ref{E:weight decomp}), the roots of $\det(xI-t)$ in $L$ are the values $\alpha(t)$ with $\alpha \in \Omega$.     Since $P_{A_v}(x)=\det(xI-t)$ by assumption, we have $\calW_{A_v}=\{\alpha(t):\alpha\in \Omega\}=\gamma(\Omega)$.    The set $\Omega$ generates $X(\T)$ since $\G_A$ acts faithfully on $V$.  Since $\gamma(\Omega)=\calW_{A_v}$ and $\calW_{A_v}$ generates $\Phi_{A_v}$, we deduce that $\gamma(X(\T))=\Phi_{A_v}$.  This proves that $\gamma\colon X(\T)\to\Phi_{A_v}$ is a well-defined surjective homomorphism.

  The group $\Phi_{A_v}$ is a free abelian group of rank $\tilde r$ by our definition of $\calS_A$ where $\tilde r$ is the common rank of the groups $\G_{A,\ell}$.   The group $X(\T)$ is a free abelian group whose rank equals the rank of $\G_A$.  Since $\gamma$ is a surjective map of free abelian groups, we find that $\gamma$ is an isomorphism if and only if $\tilde r$ equals the rank of $\G_A$.  By \cite{MR1339927}*{Theorem~4.3}, the Mumford-Tate conjecture for $A$ holds if and only if $\tilde r$ equals the rank of $\G_A$.
\end{proof}

Using that $\calS_A$ has density 1, Theorem~\ref{T:main}(i) will follow immediately from the next lemma.

\begin{lemma} \label{L:reduce to poly}
Fix a place $v\in \calS_A$.   
\begin{romanenum}
\item
The abelian variety $A_v$ is isogenous to $B^{m}$ for an abelian variety $B$ over $\FF_v$.
\item  
If the Mumford-Tate conjecture for $A$ holds, then $P_{B}(x)$ is separable where $B/\FF_v$ is as in (i).
\item
If $P_{B}(x)$ is irreducible, then the abelian variety $B/\FF_v$ in (i) is absolutely simple.
\end{romanenum}
\end{lemma}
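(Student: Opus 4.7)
The approach is to combine the weight decomposition for the Mumford--Tate representation $\G_A \hookrightarrow \GL_V$ with Honda--Tate theory. For part (i), I would fix a prime $\ell$ with $v \nmid \ell$ and use Deligne's inclusion $\G_{A,\ell}^\circ \subseteq \G_{A,\QQ_\ell}$ from Proposition~\ref{P:MT inclusion} to regard the semisimple element $\rho_{A,\ell}(\Frob_v)$ as an element of $\G_A(\QQ_\ell)$. After conjugating in $\G_A(\Qbar_\ell)$, one may place it inside $\T(\Qbar_\ell)$; call the resulting element $t$. Via the comparison isomorphism $V_\ell(A) \cong V \otimes_\QQ \QQ_\ell$, the polynomial $P_{A_v}(x)$ coincides with $\det(xI - t)$ acting on $V \otimes_\QQ \Qbar_\ell$, and the weight decomposition gives
\[
P_{A_v}(x) = \prod_{\alpha \in \Omega} \bigl(x - \alpha(t)\bigr)^{m_\alpha} = \prod_{\alpha \in \Omega} \bigl(x - \alpha(t)\bigr)^{m}
\]
by Lemma~\ref{L:Serre strong}\eqref{I:Serre strong, m}. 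Thus every root of $P_{A_v}$ appears with multiplicity divisible by $m$. The condition $v \in \calS_A$ rules out the obstruction $x^2 - p \mid P_{A_v}$ of Lemma~\ref{L:Honda-Tate}: otherwise $\pm p^{1/2}$ would both lie in $\calW_{A_v}$ and their ratio $-1$ would be a torsion element of $\Phi_{A_v}$. Honda--Tate then writes $A_v$ up to isogeny as $\prod_i B_i^{m k_i}$ with $B_i$ simple, so $B := \prod_i B_i^{k_i}$ satisfies $A_v \sim B^m$, proving (i).

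For part (ii), under the Mumford--Tate conjecture Lemma~\ref{L:gamma details} promotes $\gamma\colon X(\T) \to \Phi_{A_v}$, $\alpha \mapsto \alpha(t)$, to an isomorphism of free abelian groups; in particular the values $\alpha(t)$ for $\alpha \in \Omega$ are pairwise distinct. Combined with the product formula from (i), every root of $P_{A_v}$ has multiplicity exactly $m$, forcing every $k_i$ above to equal $1$. Then $B \sim \prod_i B_i$ with the $B_i$ pairwise non-isogenous simple, so $P_B(x) = \prod_i Q_i(x)$ is a product of distinct monic irreducible polynomials, hence separable.

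For part (iii), assume $P_B(x)$ is irreducible; then $B/\FF_v$ is simple with $P_B$ the minimal polynomial of $\pi_B$, and the $2\dim B$ roots $\pi_1, \ldots, \pi_{2\dim B}$ of $P_B$ are pairwise distinct and Galois conjugate. To establish absolute simplicity I would show that $B_{\FF_{v^n}}$ is simple for every $n \geq 1$. The key observation uses $v \in \calS_A$: since $\Phi_B \subseteq \Phi_{A_v}$ is torsion free, no quotient $\pi_i/\pi_j$ with $i\neq j$ can be a root of unity, so the powers $\pi_i^n$ remain pairwise distinct for every $n$. Galois transitivity on $\{\pi_i\}$ passes to $\{\pi_i^n\}$, so $P_{B_{\FF_{v^n}}}(x) = \prod_i (x - \pi_i^n)$ is the minimal polynomial of $\pi_1^n$, and in particular irreducible. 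The Honda--Tate dictionary then forces any decomposition $B_{\FF_{v^n}} \sim C^s$ with $P_C = R^k$ (for $R$ the minimal polynomial of $\pi_C$) to satisfy $sk = 1$, giving $B_{\FF_{v^n}}$ simple; since $n$ is arbitrary, $B$ is absolutely simple.

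The main technical wrinkle is in (iii): Lemma~\ref{L:Honda-Tate} is only stated for finite fields of prime cardinality, whereas $\FF_{v^n}$ typically is not. The resolution is that once $P_{B_{\FF_{v^n}}}$ is known to be irreducible, one needs only the general Honda--Tate correspondence, and not the refined statement for prime fields, to force simplicity; the freeness condition built into $\calS_A$ does all of the work of ruling out roots of unity that could otherwise cause the powers $\pi_i^n$ to collide or to drop the degree of $\QQ(\pi_B^n)$ inside $\QQ(\pi_B)$.
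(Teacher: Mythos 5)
Your proof is correct and follows essentially the same route as the paper's: part (i) uses Proposition~\ref{P:MT inclusion} to view $\rho_{A,\ell}(\Frob_v)$ inside $\G_A$, the weight decomposition (\ref{E:weight decomp}) with Lemma~\ref{L:Serre strong}(\ref{I:Serre strong, m}) to force $P_{A_v}=Q^m$, and torsion-freeness of $\Phi_{A_v}$ to rule out $x^2-p$ before invoking Lemma~\ref{L:Honda-Tate}; part (ii) uses the injectivity of $\gamma$ from Lemma~\ref{L:gamma details}; part (iii) uses torsion-freeness of $\Phi_{A_v}$ to show $\pi\mapsto\pi^n$ is injective and Galois-equivariant on $\calW_{A_v}$, so $P_{B_{\FF_{v^n}}}$ stays irreducible. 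Your closing remark about Lemma~\ref{L:Honda-Tate} applying only over prime fields, and why that is harmless once $P_{B_{\FF_{v^n}}}$ is known irreducible, is a point the paper leaves implicit but is exactly right.
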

\begin{proof}
Fix a prime $\ell$ such that $v\nmid \ell$ and choose an embedding $\Qbar\hookrightarrow \Qbar_\ell$.   By Proposition~\ref{P:MT inclusion}, $\rho_{A,\ell}(\Frob_v)$ gives a conjugacy class in $\G_A(\QQ_\ell)$.  Choose an element $t\in \T(\Qbar_\ell)$ that is conjugate to $\rho_{A,\ell}(\Frob_v)$ in $\G_A(\Qbar_\ell)$.  By (\ref{E:weight decomp}) and Lemma~\ref{L:Serre strong}(\ref{I:Serre strong, m}), we have
\begin{equation} \label{E:weight factorization}
P_{A_v}(x)=\det(xI-t)=\Big(\prod_{\alpha\in \Omega}(x-\alpha(t))\Big)^m
\end{equation}
and hence $P_{A_v}(x)$ is the $m$-th power of a monic polynomial $Q(x)$ in $\ZZ[x]$.  

The field $\FF_v$ has prime cardinality $p:=N(v)$ since $v\in \calS_A$.   The polynomial $x^2-p$ does not divide $P_{A_v}(x)$; if it did, then $-1=(-\sqrt{p})/\sqrt{p}$ would belong to $\Phi_{A_v}$, which is impossible since $\Phi_{A_v}$ is torsion-free by our choice of $\calS_A$.    Theorem~\ref{L:Honda-Tate} thus implies that $A_v$ is isogenous to $B^m$ for some abelian variety $B/\FF_v$ which satisfies $P_{B}(x)=Q(x)$.  This proves (i).   If the Mumford-Tate conjecture for $A$ holds, then $Q(x)$ is separable by (\ref{E:weight factorization}) and Lemma~\ref{L:gamma details}; this proves (ii).

Finally we consider (iii); suppose that $P_B(x)$ is irreducible.  Take any positive integer $i$ and let $\FF$ be the degree $i$ extension of $\FF_v$.    We have $P_{B_{\FF}}(x) = \prod_{\pi \in \calW_{A_v}}(x-\pi^i)$ 
since $P_{B}(x)$ is separable with roots $\calW_{A_v}$.   For $\sigma\in \Gal_\QQ$ and $\pi_1,\pi_2\in \calW_{A_v}$, we claim that $\sigma(\pi_1^i)=\pi_2^i$ if and only if $\sigma(\pi_1)=\pi_2$.    If $\sigma(\pi_1)=\pi_2$, then we have $\sigma(\pi_1^i)=\pi_2^i$ by taking $i$-th powers.   If $\sigma(\pi_1^i)=\pi_2^i$, then $\sigma(\pi_1)/\pi_2$ equals 1 since it is an $i$-th root of unity that belongs to the torsion-free subgroup $\Phi_{A_v}$ of $\Qbar^\times$.  The group $\Gal_{\QQ}$ acts transitively on $\calW_{A_v}$ since $P_B(x)$ is irreducible.  The claim then implies that $P_{B_{\FF}}(x)\in \ZZ[x]$ is irreducible and hence $B_{\FF}$ is simple.   The abelian variety $B$ is absolutely simple since $i$ was arbitrary.
\end{proof}

\subsection{Galois action}    \label{SS:Galois action}
  For the rest of \S\ref{S:proof main}, we shall assume that the Mumford-Tate conjecture for $A$ holds.  Fix a place $v\in \calS_A$.  Choose an element $t_v \in \T(\Qbar)$ such that $\cl_{\G_A}'(t_v)=F_v'$ where $F_v' \in \Conj'(\G_A)(\QQ)$ is as in Theorem~\ref{T:Noot}; the place $v$ satisfies the condition of the theorem since $\Phi_{A_v}$ is torsion-free.   Since $F_v'=\cl_{\G_A}'(\rho_{A,\ell}(\Frob_v))$ for any prime $\ell$ satisfying $v\nmid \ell$, we may further assume that $t_v$ is chosen so that $\det(xI-t_v)=P_{A_v}(x)$.   Let $\Gamma$ be the subgroup of $\Aut(\T_{\Qbar})\cong \Aut(X(\T))$ from \S\ref{S:Noot}.

By Lemma~\ref{L:gamma details}, the map
\[
\gamma\colon X(\T) \to \Phi_{A_v},\quad \alpha\mapsto \alpha(t_v)
\]
is a homomorphism that satisfies $\gamma(\Omega)=\calW_{A_v}$; it is an isomorphism since we have assumed that the Mumford-Tate conjecture for $A$ holds.  For each $\sigma\in \Gal_\QQ$, we define $\psi_v(\sigma)$ to be the unique automorphism of $X(\T)$ for which the following diagram commutes:
\[
\xymatrix{ 
X(\T) \ar[r]^{\substack{\gamma\\\sim}} \ar[d]_{\psi_v(\sigma)} & \Phi_{A_v} \ar[d]^{\sigma}\\
X(\T) \ar[r]^{\substack{\gamma\\\sim}}  & \Phi_{A_v}.
}
\]
This defines a Galois representation
\[
\psi_v \colon \Gal_\QQ \to \Aut(X(\T)).
\]

For each prime $\ell$, we choose an embedding $\Qbar\hookrightarrow \Qbar_\ell$.   With respect to this embedding, the restriction map gives an injective homomorphism $\Gal_{\QQ_\ell}\hookrightarrow \Gal_\QQ$.    This embedding and the our assumption that the Mumford-Tate conjecture for $A$ holds, gives an isomorphism $W(\G_A,\T)\xrightarrow{\sim}W(\G_{A,\QQ_\ell}, \T_{\QQ_\ell})=W(\G_{A,\ell},\T_{\QQ_\ell})$.    If $\T_{\QQ_\ell}$ is split and $v\nmid \ell$, then using this isomorphism of Weyl groups and the construction of \S\ref{S:local Galois}, we have a group homomorphism 
\[
\psi_{v,\ell}\colon \Gal_{\QQ_\ell} \to W(\G_A,\T).
\]

\begin{lemma}  \label{L:Weyl class of psi}
Fix notation as above and let $\ell$ be a prime for with $\T_{\QQ_\ell}$ is split and $v\nmid \ell$.   Then for all $\sigma\in \Gal_{\QQ_\ell}$,  $\psi_v(\sigma)$ and $\psi_{v,\ell}(\sigma)$ are elements of $W(\G_A,\T)$ that lie in the same conjugacy class of $\Gamma$.
\end{lemma}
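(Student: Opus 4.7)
The plan is to transport both $\psi_v(\sigma)$ and $\psi_{v,\ell}(\sigma)$ into $\Aut(X(\T))$ by evaluating characters on base points, and then show that the two elements differ by conjugation by a single element of $\Gamma$. Once that is done, the normality of $W(\G_A,\T)$ in $\Gamma$ takes care of the rest.

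First I would directly compare the base points $t_v$ and $t_{v,\ell}$. Since $t_{v,\ell}\in \T(\Qbar_\ell)$ is $\G_A(\Qbar_\ell)$-conjugate to $\rho_{A,\ell}(\Frob_v)$, its image under $\cl_{\G_A}'$ agrees with $F_v'=\cl_{\G_A}'(t_v)$. Using the description of $\Conj'(\G_A)_{\Qbar}$ as $\T_{\Qbar}/\Gamma$ from \S\ref{S:Noot} (and its base change to $\Qbar_\ell$), together with the fixed embedding $\Qbar\hookrightarrow\Qbar_\ell$ that lets me view $t_v$ as lying in $\T(\Qbar_\ell)$, I obtain an element $\beta\in\Gamma$ with $t_v=\beta(t_{v,\ell})$.

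Second, I would translate the definitions of $\psi_v(\sigma)$ and $\psi_{v,\ell}(\sigma)$ into parallel character identities. The commutative diagram defining $\psi_v(\sigma)$ immediately gives $\psi_v(\sigma)(\alpha)(t_v)=\sigma(\alpha(t_v))$ for all $\alpha\in X(\T)$. On the other side, the relation $\sigma(t_{v,\ell})=\psi_{v,\ell}(\sigma)^{-1}(t_{v,\ell})$ combined with the standard identity $\alpha(w^{-1}(t))=w_*(\alpha)(t)$ yields $\alpha(\sigma(t_{v,\ell}))=\psi_{v,\ell}(\sigma)(\alpha)(t_{v,\ell})$; and because $\T_{\QQ_\ell}$ is split, $\varphi_\T$ restricts to the trivial homomorphism on $\Gal_{\QQ_\ell}$, so $\alpha(\sigma(t_{v,\ell}))=\sigma(\alpha(t_{v,\ell}))$. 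This gives the parallel formula $\psi_{v,\ell}(\sigma)(\alpha)(t_{v,\ell})=\sigma(\alpha(t_{v,\ell}))$.

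The main step is to substitute $t_v=\beta(t_{v,\ell})$ into the first identity and push everything down to an identity at $t_{v,\ell}$. Using $\alpha(\beta(t))=(\beta^{-1}\alpha)(t)$ (viewing $\Gamma\subseteq\Aut(X(\T))$ via $\beta\mapsto\beta_*$) on both the inner and outer occurrences of $\alpha(\cdot)$, the defining identity for $\psi_v(\sigma)$ becomes
\[
(\beta^{-1}\psi_v(\sigma))(\alpha)(t_{v,\ell}) = (\psi_{v,\ell}(\sigma)\beta^{-1})(\alpha)(t_{v,\ell}).
\]
The subgroup of $\T(\Qbar_\ell)$ generated by $t_{v,\ell}$ is Zariski dense in $\T_{\Qbar_\ell}$ because $v\in\calS_A$ forces $\Phi_{A_v}$ to have full rank, so one may cancel $t_{v,\ell}$ on both sides and conclude $\psi_v(\sigma)=\beta\,\psi_{v,\ell}(\sigma)\,\beta^{-1}$ in $\Aut(X(\T))$.

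To finish, note that $\psi_{v,\ell}(\sigma)$ already lies in $W(\G_A,\T)$ once we invoke the Mumford-Tate identification $W(\G_{A,\ell},\T_{\QQ_\ell})=W(\G_A,\T)$, while $\beta\in\Gamma$; since $W(\G_A,\T)$ is normal in $\Gamma$, the conjugate $\psi_v(\sigma)$ again lies in $W(\G_A,\T)$, and the displayed formula is precisely the statement that $\psi_v(\sigma)$ and $\psi_{v,\ell}(\sigma)$ represent the same conjugacy class of $\Gamma$. The only real hurdle is bookkeeping: keeping three distinct actions on $X(\T)$ straight — the Galois action $\varphi_\T$ (which vanishes on $\Gal_{\QQ_\ell}$ by splitness), the $\Gamma$-action via $(-)_*$, and the Weyl group action — so that the two instances of $\beta^{-1}$ cancel in the right direction.
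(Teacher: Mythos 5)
Your argument is correct and follows the paper's proof essentially verbatim: comparing $t_v$ and $t_{v,\ell}$ via an element $\beta\in\Gamma$ coming from $\cl_{\G_A}'(t_v)=F_v'=\cl_{\G_A}'(t_{v,\ell})$, computing $\sigma(\alpha(t_v))$ two ways (using that all characters of $\T_{\Qbar_\ell}$ are defined over $\QQ_\ell$ because $\T_{\QQ_\ell}$ is split), and invoking Zariski-density of the group generated by the base point to identify the automorphisms, yielding $\psi_v(\sigma)=\beta\,\psi_{v,\ell}(\sigma)\,\beta^{-1}$. The only superficial difference is that your $\beta$ is the inverse of the paper's (you write $t_v=\beta(t_{v,\ell})$ rather than $t_{v,\ell}=\beta(t_v)$), so the final conjugation reads $\beta(\cdot)\beta^{-1}$ instead of $\beta^{-1}(\cdot)\beta$.
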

\begin{proof}
Recall that to define $\psi_{v,\ell}$, we chose an element $t_{v,\ell} \in \T(\Qbar_\ell)$ such that $t_{v,\ell}$ is conjugate to $\rho_{A,\ell}(\Frob_v)$ in $\G_{A,\ell}(\Qbar_\ell)=\G_A(\Qbar_\ell)$.   This implies that $\cl_{\G_A}'(t_{v,\ell})=\cl_{\G_A}'(\rho_{A,\ell}(\Frob_v))=F_v'$.   So there is a unique $\beta \in \Gamma$ such that $t_{v,\ell}=\beta(t_v)$.    Now take any $\sigma\in \Gal_{\QQ_\ell}$ and $\alpha\in X(\T)$.    We have 
\[
\sigma(\alpha(t_v))= \sigma( \alpha(\beta^{-1}(t_{v,\ell})))= \alpha(\beta^{-1}(\sigma(t_{v,\ell})))
\]
where we have used that $\beta$ and $\alpha$ are defined over $\QQ_\ell$ since $\T_{\QQ_\ell}$ is split.   By the definition of $\psi_{v,\ell}$, we have
\[
\sigma(\alpha(t_v))=\alpha(\beta^{-1}(\psi_{v,\ell}(\sigma)^{-1}(t_{v,\ell}))) = \big( \alpha \circ (\beta^{-1} \circ \psi_{v,\ell}(\sigma)^{-1} \circ \beta)\big) (t_v).
\]
From our characterization of $\psi_{v}(\sigma)$, we deduce that $\psi_v(\sigma)$ equals $\beta^{-1} \circ \psi_{v,\ell}(\sigma) \circ \beta$; it is an element of $W(\G_A,\T)$ since $\psi_{v,\ell}(\sigma)\in W(\G_A,\T)$ and $W(\G_A,\T)$ is a normal subgroup of $\Gamma$.
\end{proof}

Recall that we defined $k_{\G_A}$ to be the intersection of all the subfields $L\subseteq \Qbar$ for which $\G_{A,L}$ is split; it is a finite Galois extension of $\QQ$.   The following gives a strong constraint on the image of $\psi_v$.

\begin{lemma} \label{L:in the Weyl group}
With notation as above, $\psi_v(\Gal_{k_{\G_A}})$ is a subgroup of $W(\G_A,\T)$.
\end{lemma}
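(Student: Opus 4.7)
The plan is to factor $\psi_v$ through a cocycle valued in $\Gamma$ and kill that cocycle modulo $W(\G_A,\T)$ using Lemma~\ref{L:Weyl class of psi} together with the Chebotarev density theorem.

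First I would unpack $\psi_v$ concretely. Since $\cl'_{\G_A}(t_v) = F_v' \in \Conj'(\G_A)(\QQ)$ is Galois-stable and (using $v \in \calS_A$ together with the Mumford--Tate hypothesis) $t_v$ generates a Zariski dense subgroup of $\T$, each $\sigma \in \Gal_\QQ$ admits a unique $\beta_\sigma \in \Gamma$ satisfying $\sigma(t_v) = \beta_\sigma(t_v)$. Unwinding the identity $\sigma(\alpha(t)) = \varphi_\T(\sigma)(\alpha)(\sigma(t))$ and comparing with the defining property of $\psi_v$ gives
\[
\psi_v(\sigma) \;=\; \beta_{\sigma,*}^{-1}\circ\varphi_\T(\sigma) \qquad\text{in } \Aut(X(\T)).
\]
For $\sigma \in \Gal_{k_{\G_A}}$, the $*$-action on the based root datum of $\G_A$ is trivial, which forces $\varphi_\T(\sigma) \in W(\G_A,\T)$. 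Thus the lemma reduces to the assertion that $\beta_\sigma \in W(\G_A,\T)$ for every $\sigma \in \Gal_{k_{\G_A}}$.

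The local input is Lemma~\ref{L:Weyl class of psi}: for any rational prime $\ell$ with $\T_{\QQ_\ell}$ split and $v \nmid \ell$, the image $\psi_v(D_\ell)$ lies in $W(\G_A,\T)$. The map $\psi_v$ factors through $\Gal(M/\QQ)$ for some sufficiently large finite Galois extension $M/\QQ$, which we may assume contains $k_\T\cdot k_{\G_A}\cdot\QQ(\calW_{A_v})$. Chebotarev applied to $M$ shows that the Frobenius conjugacy classes at such ``good'' primes exhaust the classes of $\Gal(M/\QQ)$ contained in $\Gal(M/M\cap k_\T)$; combined with the local bound this yields $\psi_v(\Gal_{k_\T}) \subseteq W(\G_A,\T)$.

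What remains --- and this is the main obstacle --- is to close the gap between $\Gal_{k_\T}$ and $\Gal_{k_{\G_A}}$ (these differ in general, though splitting of $\T$ forces splitting of $\G_A$, giving $k_{\G_A}\subseteq k_\T$). I would argue that $\bar\beta\colon \Gal_{k_{\G_A}} \to \Gamma/W(\G_A,\T)$, $\sigma \mapsto \bar\beta_\sigma$, is a group homomorphism: the cocycle identity $\beta_{\sigma\tau} = \sigma(\beta_\tau)\cdot\beta_\sigma$, combined with the fact that $\varphi_\T(\Gal_{k_{\G_A}}) \subseteq W(\G_A,\T)$ acts trivially on the abelian quotient $\Gamma/W(\G_A,\T)$ (because $W(\G_A,\T)$ is normal in $\Gamma$), turns the cocycle into a homomorphism when restricted to $\Gal_{k_{\G_A}}$. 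Since the previous step already kills $\bar\beta$ on $\Gal_{k_\T}$, it descends to $\Gal(k_\T/k_{\G_A}) \to \Gamma/W(\G_A,\T)$. My plan for showing this descended map is trivial exploits the explicit description of $\Gamma/W(\G_A,\T)$ as an elementary abelian $2$-group indexed by the $D_n$-type simple factors of $\G_A^\der$ (as in Lemma~\ref{L:new Jordan}), together with the compatibility between $\cl_{\G_A}(t_v)$ and the Galois-fixed classes $\cl_{\G_A}(\rho_{A,\ell}(\Frob_v))$ at primes $\ell$ where only $\G_{A,\QQ_\ell}$ (rather than $\T_{\QQ_\ell}$) is split.
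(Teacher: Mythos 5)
Your setup is sound and rederives, in cocycle language, the same local input the paper uses: the identity $\psi_v(\sigma)=\beta_{\sigma,*}^{-1}\circ\varphi_\T(\sigma)$ is correct, the reduction of the lemma to ``$\beta_\sigma\in W(\G_A,\T)$ for $\sigma\in\Gal_{k_{\G_A}}$'' is correct (using that $\varphi_\T(\Gal_{k_{\G_A}})\subseteq W(\G_A,\T)$ because $g\,\sigma(g)^{-1}\in N_{\G_A}(\T)(\Qbar)$ for $g$ conjugating a split torus of $\G_{A,L}$ into $\T_{\Qbar}$), and the cocycle-to-homomorphism step for $\bar\beta\colon\Gal_{k_{\G_A}}\to\Gamma/W(\G_A,\T)$ is also correct, since $\varphi_\T(\sigma)\in W(\G_A,\T)$ means conjugation by $\varphi_\T(\sigma)$ acts trivially on the quotient $\Gamma/W(\G_A,\T)$. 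The Chebotarev argument via Lemma~\ref{L:Weyl class of psi} at primes where $\T_{\QQ_\ell}$ splits does give $\psi_v(\Gal_{k_\T})\subseteq W(\G_A,\T)$, so you have correctly reduced to showing that the descended homomorphism $\Gal(k_\T/k_{\G_A})\to\Gamma/W(\G_A,\T)$ is trivial.

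That last step is a genuine gap, and the sketch you give for it does not close it. You propose to use ``compatibility between $\cl_{\G_A}(t_v)$ and the Galois-fixed classes $\cl_{\G_A}(\rho_{A,\ell}(\Frob_v))$''---but that $\cl_{\G_A}(\rho_{A,\ell}(\Frob_v))$ lies in $\Conj(\G_A)(\QQ)$ and is independent of $\ell$ is precisely Conjecture~\ref{C:Frob conj}, which this lemma is not allowed to assume. The only unconditional input available is Noot's Theorem~\ref{T:Noot}, which only controls $\cl'_{\G_A}(\rho_{A,\ell}(\Frob_v))$ in $\Conj'(\G_A)(\QQ)$; the gap between $\Conj$ and $\Conj'$ is exactly the quotient $\Gamma/W(\G_A,\T)$ you are trying to kill, so this is circular. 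The mention of the $D_n$/elementary-abelian-$2$-group structure of $\Gamma/W$ is just a restatement of where the difficulty lives, not an argument.

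What the paper actually does at this point is worth internalizing because it is what makes the argument go through: rather than fixing $\T$ and trying to analyze the cocycle $\bar\beta$ directly, for each finite extension $L/\QQ$ with $\G_{A,L}$ split one chooses a \emph{different} maximal torus $\T'$ of $\G_A$ over $\QQ$ with $\T'_L$ split, runs the same Chebotarev argument for the homomorphism $\psi_v'$ built from $\T'$ and $t_v'=gt_vg^{-1}$ to conclude $\psi_v'(\Gal_L)\subseteq W(\G_A,\T')$, and then transports back to $\T$ via the conjugation isomorphism $\beta_*\colon\Aut(\T_\Qbar)\to\Aut(\T'_\Qbar)$, which carries $W(\G_A,\T)$ to $W(\G_A,\T')$ and intertwines $\psi_v$ and $\psi_v'$. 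Since $k_{\G_A}$ is the intersection of all such $L$, and hence $\Gal_{k_{\G_A}}$ is generated by the groups $\Gal_L$, this gives $\psi_v(\Gal_{k_{\G_A}})\subseteq W(\G_A,\T)$. In your language: the Chebotarev bound lives at the level of $k_{\T'}$, and by varying $\T'$ one can push $k_{\T'}$ down inside each splitting field $L$ of $\G_A$; sticking with the single torus $\T$ only ever reaches $k_\T$, which is why you are stuck with the residual piece $\Gal(k_\T/k_{\G_A})$. If you want to keep your framework, you would need an analogue of Lemma~\ref{L:Weyl class of psi} at primes $\ell$ where $\G_{A,\QQ_\ell}$ but not $\T_{\QQ_\ell}$ is split, and proving that local statement amounts to the same torus-switching.
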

\begin{proof}
Let $L\subseteq \Qbar$ be a finite extension of $\QQ$ for which $\T_L$ is split.  Let $\Lambda$ be the set of primes $\ell$ for which $\psi_v$ is unramified at $\ell$, $v\nmid \ell$, and $\ell$ splits completely in $L$.   The torus $\T_{\QQ_\ell}$ is split for all $\ell\in \Lambda$.  From Lemma~\ref{L:Weyl class of psi}, we find that $\psi_v(\Frob_\ell)$ belongs to $W(\G_A,\T)$ for all $\ell\in \Lambda$.   The Chebotarev density theorem then ensures us that $\psi_v(\Gal_L)\subseteq W(\G_A,\T)$.

Now suppose that $L\subseteq \Qbar$ is \emph{any} finite extension of $\QQ$ for which $\G_{A,L}$ is split.  Choose a maximal torus $\T'$ of $\G_A$ for which $\T'_{L}$ is split.  Fix an element $g\in \G_{A}(\Qbar)$ such that $\T'_{\Qbar}=g \T_{\Qbar} g^{-1}$, and define $t'_v:=gt_v g^{-1}$.  We have $\cl_{\G_A}'(t'_v)=F_v'$ and $\det(xI-t_v')=P_{A_v}(x)$.    As above, we can define a homomorphism $\psi_v' \colon \Gal_\QQ\to \Aut(X(\T'))$ that is characterized by the property $\sigma(\alpha(t_v')) = \big(\psi_v'(\sigma) \alpha\big)(t_v')$ for all $\alpha\in X(\T')$ and $\sigma\in \Gal_\QQ$.   The argument from the beginning of the proof shows that $\psi_{v}'(\Gal_{L})\subseteq W(\G_A,\T')$.       We now need to relate $\psi_v$ and $\psi_v'$.

Define the isomorphisms $\beta \colon\T_{\Qbar}\to \T'_\Qbar,\, t\mapsto gtg^{-1}$ and $\beta_*\colon \Aut(\T_\Qbar)\to \Aut(\T'_\Qbar),\,  f\mapsto \beta\circ f \circ \beta^{-1}$.  One readily checks that $\beta_*(W(\G_A,\T))=W(\G_A,\T')$.   Take any $\alpha\in X(\T)$ and $\sigma\in \Gal_L$.  For the rest of the proof, it will be convenient to view $\psi_v(\sigma)$ and $\psi_v'(\sigma)$ as elements of $\Aut(\T_\Qbar)$ and $\Aut(\T_\Qbar')$, respectively.   By the defining property of $\psi_v'(\sigma)$, we have
\[
\sigma(\alpha(t_v))=\sigma((\alpha\circ \beta^{-1})(t_v')) =(\alpha\circ \beta^{-1} \circ \psi_v'(\sigma)^{-1})(t_v')= (\alpha\circ \beta^{-1} \circ \psi_v'(\sigma)^{-1}\circ \beta)(t_v)
\]
By our characterization of $\psi_v(\sigma)$, we deduce that $\psi_v(\sigma)= \beta^{-1} \circ \psi'_v(\sigma) \circ \beta=\beta_*^{-1}(\psi'_v(\sigma))$.    Therefore, $\psi_v(\Gal_{L})\subseteq \beta_*^{-1}(W(\G_A,\T'))=W(\G_A,\T)$.

We have shown that $\psi_v(\Gal_L)\subseteq W(\G_A,\T)$ for every finite extension $L/\QQ$ for which $\G_{A,L}$ is split.   It is then easy to show that $\psi_v(\Gal_{k_{\G_A}}) \subseteq W(\G_A,\T)$.
\end{proof}

We will now prove that $\psi_v$ has large image for most places $v$.

\begin{prop}  \label{P:sieving}
Fix a finite extension $L$ of $k_{\G_A}$.   Then $\psi_v(\Gal_L) = W(\G_A,\T)$ for all places $v\in \calS_A$ away from a set of density 0.
\end{prop}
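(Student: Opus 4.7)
The plan is as follows. Since $L\supseteq k_{\G_A}$, Lemma~\ref{L:in the Weyl group} already gives $\psi_v(\Gal_L)\subseteq W(\G_A,\T)$ for every $v\in\calS_A$, so the content of the proposition is the reverse inclusion on a density-one set. Applied to the subgroup $\psi_v(\Gal_L)\subseteq\Gamma$, Lemma~\ref{L:new Jordan} says that this reverse inclusion holds as soon as $\psi_v(\Gal_L)$ meets every $\Gamma$-conjugacy class $\mathcal{C}_1,\dots,\mathcal{C}_r$ contained in $W(\G_A,\T)$. Since there are only finitely many such classes, it suffices to fix one class $\mathcal{C}:=\mathcal{C}_i$, set $c:=|\mathcal{C}|/|W(\G_A,\T)|>0$, and show that
\[
E_{\mathcal{C}}\;:=\;\{v\in\calS_A : \psi_v(\Gal_L)\cap\mathcal{C}=\emptyset\}
\]
has density zero.

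To do this I would fix a finite Galois extension $F/\QQ$ containing both $L$ and a splitting field of the torus $\T$, and consider the (infinite, by Chebotarev) set $\Lambda_0$ of large primes $\ell$ that split completely in $F$ and for which $\calG_{A,\ell}$ is a reductive $\ZZ_\ell$-group scheme possessing a split maximal torus $\calT$ with generic fiber $\T_{\QQ_\ell}$. For each $\ell\in\Lambda_0$, both Lemma~\ref{L:Weyl class of psi} and Lemma~\ref{L:sieving input} apply, and moreover $\ell$ splits completely in $L$, so via the embedding $\Qbar\hookrightarrow\Qbar_\ell$ the element $\Frob_\ell\in\Gal_{\QQ_\ell}$ represents a Frobenius of $\Gal_L$. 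Lemma~\ref{L:sieving input}, applied with its parameter $C$ taken to be our $\mathcal{C}$, furnishes a conjugation-stable set $U_\ell\subseteq\bbar\rho_{A,\ell}(\Gal_K)$ such that $\bbar\rho_{A,\ell}(\Frob_v)\subseteq U_\ell$ forces $\psi_{v,\ell}(\Frob_\ell)\in\mathcal{C}$; Lemma~\ref{L:Weyl class of psi} then places $\psi_v(\Frob_\ell)$ in the same $\Gamma$-conjugacy class, namely $\mathcal{C}$ itself, so $v\notin E_{\mathcal{C}}$. Lemma~\ref{L:sieving input} additionally provides the uniform estimate
\[
\frac{|\bbar\rho_{A,\ell}(\Gamma_C)\cap U_\ell|}{|\bbar\rho_{A,\ell}(\Gamma_C)|}\;=\;c+O(1/\ell)
\]
for every conjugacy class $C$ of $\Gal(K'/K)$, where $K'/K$ is the independence extension of Proposition~\ref{P:independence}.

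Finally I would sieve. For any finite $\Lambda\subseteq\Lambda_0$ consisting of primes large enough that every factor in the product below is at most $1-c/2$, Proposition~\ref{P:independence density} gives
\[
d\bigl(\{v\in\Sigma_K : \bbar\rho_{A,\ell}(\Frob_v)\not\subseteq U_\ell\text{ for all }\ell\in\Lambda\}\bigr)\;\le\;(1-c/2)^{|\Lambda|}.
\]
Since $E_{\mathcal{C}}$ is contained in this set (up to the finitely many $v\in\Sigma_K$ above primes in $\Lambda$), letting $|\Lambda|\to\infty$ through $\Lambda_0$ forces the upper density of $E_{\mathcal{C}}$ to be zero, hence $E_{\mathcal{C}}$ has density zero.

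The step I expect to be hardest is ensuring that $\Lambda_0$ is infinite: one must simultaneously arrange that $\calG_{A,\ell}$ is a reductive $\ZZ_\ell$-group scheme with a split maximal torus (the hypothesis of Lemma~\ref{L:sieving input}), that $\T_{\QQ_\ell}$ is split (needed for Lemma~\ref{L:Weyl class of psi}), and that $\ell$ splits completely in $L$ (needed so that $\Frob_\ell$ can be read as a Frobenius in $\Gal_L$). Bundling these three splitting conditions into a single auxiliary field $F$ is what makes Chebotarev applicable and underpins the entire sieve.
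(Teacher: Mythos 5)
Your proposal is correct and mirrors the paper's proof: both reduce via Lemma~\ref{L:new Jordan} to showing $\psi_v(\Gal_L)$ meets each $\Gamma$-conjugacy class in $W(\G_A,\T)$, and both then sieve over primes $\ell$ split completely in a common auxiliary field, using Lemma~\ref{L:sieving input} to produce the sets $U_\ell$, Lemma~\ref{L:Weyl class of psi} to transfer the local conclusion from $\psi_{v,\ell}$ to $\psi_v$, and Proposition~\ref{P:independence density} to push the product over $\ell$ to zero. The step you flag as hardest is resolved in the paper by taking $\mathscr{T}$ to be the Zariski closure of $\T$ in $\GL_{H_1(A(\CC),\ZZ)}$ over $\ZZ$; under the Mumford--Tate hypothesis, $\mathscr{T}_{\ZZ_\ell}$ is a split maximal torus of $\calG_{A,\ell}$ for all sufficiently large $\ell$ splitting completely in a splitting field of $\T$, which makes the relevant set of primes infinite.
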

\begin{proof}
By Lemma~\ref{L:in the Weyl group}, we know that $\psi_v(\Gal_L)$ is a subgroup of $W(\G_A,\T)$ for all $v\in \calS_A$.   There is no harm in replacing $L$ by a larger extension, so we may assume that $\T_L$ is split.  Let $\Lambda$ be the set of primes $\ell$ that split completely in $L$, and let $\Lambda_Q$ be the set of $\ell\in \Lambda$ that satisfy $\ell\leq Q$.   The torus $\T_{\QQ_\ell}$ is split for all $\ell\in \Lambda$.   After removing a finite number of primes from $\Lambda$, we may assume by Proposition~\ref{P:image of Galois mod ell}(i) that $\calG_{A,\ell}$ is a reductive scheme over $\ZZ_\ell$ for all $\ell\in\Lambda$.

Let $\mathscr{T}$ be the Zariski closure of $\T$ in the group scheme $\GL_{H_1(A(\CC),\ZZ)}$ over $\ZZ$; note that the generic fiber of $\GL_{H_1(A(\CC),\ZZ)}$ is $\GL_V$.  For $\ell$ sufficiently large, $\mathscr{T}_{\ZZ_\ell}$ is a torus over $\ZZ_\ell$.  Since the Mumford-Tate conjecture for $A$ has been assumed, we find that $\mathscr{T}_{\ZZ_\ell}$ is a maximal torus of $\calG_{A,\ell}$ for all sufficiently large primes $\ell$.   So after possibly removing a finite number of primes from $\Lambda$, we find that $\mathscr{T}_{\ZZ_\ell}$ is a split maximal torus of the reductive scheme $\calG_{A,\ell}$ for all $\ell\in \Lambda$.

Let $K'/K$ be an extension as in Proposition~\ref{P:independence}.  Fix a non-empty subset $C$ of $W(\G_A,\T)$ that is stable under conjugation by $\Gamma$.  For each $\ell\in\Lambda$, we can identify $C$ with a subset of $W(\G_{A,\ell},\T_{\QQ_\ell})=W((\calG_{A,\ell})_{\QQ_\ell},(\mathscr{T}_{\ZZ_\ell})_{\QQ_\ell})$.        With our fixed $C$ and $K'$, let $U_\ell$ be sets of Lemma~\ref{L:sieving input} for $\ell\in \Lambda$.

Let $\calV$ be the set of place $v\in \calS_A$ for which $\bbar\rho_{A,\ell}(\Frob_v)\not\subseteq U_\ell$ for all $\ell\in \Lambda$ that satisfy $v\nmid \ell$.   Let $\calV(Q)$ be the set of places $v\in \calS_A$ such that $\bbar\rho_{A,\ell}(\Frob_v)\not\subseteq U_\ell$ for all $\ell\in \Lambda_Q$ that satisfy $v\nmid \ell$.     By Proposition~\ref{P:independence density} and using that $\calS_A$ has density 1, we find that $\calV(Q)$ has density
\[
\delta_Q:=\sum_{\calC} \frac{|\calC|}{|\Gal(K'/K)|}\cdot \prod_{\ell\in \Lambda_Q} \frac{|\bbar\rho_{A,\ell}(\Gamma_\calC) \cap (\bbar\rho_{A,\ell}(\Gal_K)-U_\ell) |}{|\bbar\rho_{A,\ell}(\Gamma_\calC)|}
\]
where $\calC$ varies over the conjugacy classes of $\Gal(K'/K)$ and $\Gamma_\calC$ is the set of $\sigma\in \Gal_K$ for which $\sigma|_{K'}\in \calC$.  Using the bounds of Lemma~\ref{L:sieving input}, we have 
\[
\delta_Q \ll \prod_{\ell\in \Lambda_Q} \Big(1-\frac{|C|}{|W(\G_{A,\ell},\T_{\QQ_\ell})|} + O(1/\ell)\Big)=\prod_{\ell\in \Lambda_Q} \Big(1-\frac{|C|}{|W(\G_{A},\T)|} + O(1/\ell)\Big).
\] 
where the implicit constants do not depend on $Q$.  Since $\Lambda$ is infinite and $C$ is non-empty, we find that  $\lim_{Q\to+\infty}\delta_Q=0$.   Since $\calV$ is a subset of $\calV(Q)$ for every $Q$, we deduce that the density of $\calV$ exists and equals 0.  

Now take any place $v\in \calS_A-\calV$.  There is some prime $\ell\in \Lambda$ for which $v\nmid \ell$ and  $\bbar\rho_{A,\ell}(\Frob_v)\subseteq U_\ell$.  By the properties of $U_\ell$ from Lemma~\ref{L:sieving input}, we find that $\psi_{v,\ell}$ is unramified at $\ell$ and $\psi_{v,\ell}(\Frob_\ell)\subseteq C$.   Since $C$ is stable under conjugation by $\Gamma$, Lemma~\ref{L:Weyl class of psi} implies that $\psi_v(\Frob_\ell)\subseteq C$.  Since $\ell$ splits completely in $L$, we deduce that $\psi_v(\Gal_L)\cap C \neq \emptyset$.

For each $v\in \calS_A$, we have $\psi_v(\Gal_L)\subseteq W(\G_A,\T)$ by Lemma~\ref{L:in the Weyl group}.  By considering the finitely many $C$, we find that for all places $v\in \calS_A$ away from a set of density 0, we have $\psi_v(\Gal_L) \cap C \neq \emptyset$ for every non-empty subset $C$ of $W(\G_A,\T)$ that is stable under conjugation by $\Gamma$.  By Lemma~\ref{L:new Jordan}, we deduce that $\psi_v(\Gal_L)=W(\G_A,\T)$ for all places $v\in \calS_A$ away from a set of density 0.
\end{proof}

\subsection{Proof of Theorem~\ref{T:Weil under MT}}  
Take $v\in \calS_A$.  The group $\Phi_{A_v}$ is generated by $\calW_{A_v}$.  Using this and  Lemma~\ref{L:in the Weyl group}, we find that $\psi_v|_{\Gal_L}$ factors through an injective homomorphism 
$\Gal(L(\calW_{A_v})/L)\hookrightarrow W(\G_A,\T)\cong W(\G_A)$.   It is an isomorphism for all $v\in \calS_A$ away from a set of density 0 by Proposition~\ref{P:sieving}.   The theorem follows by noting that $\calS_A$ has density 1.

\subsection{Proof of Theorem~\ref{T:main}(ii)}
Fix a place $v\in\calS_A$.  The following lemma says that if the image of $\psi_v$ is as large as possible, then $P_{A_v}(x)$ factors in the desired manner.  Take any embedding $E\subseteq \Qbar$ and let $\widetilde{E}$ be the Galois closure of $E$ over $\QQ$.

\begin{lemma}   \label{L:big enough implies simple}
Let $L$ be a finite extension of $k_{\G_A}$ which contains $\widetilde{E}$.   If $\psi_{v}(\Gal_L)=W(\G_A,\T)$, then $P_{A_v}(x)$ is the $m$-th power of an irreducible polynomial.
\end{lemma}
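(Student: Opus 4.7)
The plan is to use Lemma~\ref{L:reduce to poly} to reduce to showing a certain separable polynomial is irreducible over $\QQ$, and then to combine two transitivity statements: one for $\Gal_L$ inside each $W(\G_A,\T)$-orbit of weights, and one for $\Gal_\QQ$ across the embeddings of $E$. First, by Lemma~\ref{L:reduce to poly}(i)--(ii), we have $P_{A_v}(x)=Q(x)^m$ for some $Q\in\ZZ[x]$ which is separable under the Mumford--Tate hypothesis, so we must prove $Q$ is irreducible. Via the isomorphism $\gamma\colon X(\T)\xrightarrow{\sim}\Phi_{A_v}$ of Lemma~\ref{L:gamma details}, separability makes $\gamma$ restrict to a bijection $\Omega\leftrightarrow\calW_{A_v}$, and since $\calW_{A_v}$ is $\Gal_\QQ$-stable, $\psi_v(\Gal_\QQ)$ preserves $\Omega$; irreducibility of $Q$ is then equivalent to $\psi_v(\Gal_\QQ)$ acting transitively on $\Omega$. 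The hypothesis $\psi_v(\Gal_L)=W(\G_A,\T)$ together with Lemma~\ref{L:Serre strong}(ii) already yields transitivity on each $W$-orbit $\Omega_1,\ldots,\Omega_r$; the remaining task is to show that $\Gal_\QQ$ permutes $\{\Omega_1,\ldots,\Omega_r\}$ transitively.

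For the latter, I would exploit the $E$-action on $V_\ell(A)$ for any prime $\ell$ with $v\nmid\ell$. Since $E$ acts on $A$ by endomorphisms defined over $K$, it commutes with both $\rho_{A,\ell}(\Gal_K)$ (hence with $\Frob_v$) and with the $\G_A$-action, so extending scalars gives a decomposition
\[
V_\ell(A)\otimes_{\QQ_\ell}\Qbar_\ell=\bigoplus_{\tau\colon E\hookrightarrow\Qbar_\ell}V_\ell(A)_\tau
\]
into $\Frob_v$-stable and $\G_{A,\Qbar_\ell}$-stable pieces. The pieces $V_\ell(A)_\tau$ are exactly the simple $\G_A$-summands of $V\otimes\Qbar$ from Lemma~\ref{L:Serre strong}(i), so there is a natural indexing $\tau_i\leftrightarrow\Omega_i$; moreover, by Lemma~\ref{L:Serre strong}(iv) each weight of $\T$ on $V_\ell(A)_{\tau_i}$ occurs with multiplicity $m$, so the characteristic polynomial of $\Frob_v$ on $V_\ell(A)_{\tau_i}$ equals $\tilde Q_i(x)^m$ with $\tilde Q_i(x):=\prod_{\alpha\in\Omega_i}(x-\alpha(t_v))$, and therefore $Q=\prod_i\tilde Q_i$.

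The key equivariance is that any $\sigma\in\Gal_\QQ$ sends $V_\ell(A)_\tau$ to $V_\ell(A)_{\sigma\circ\tau}$, so $\sigma(\tilde Q_i)^m=\tilde Q_j^m$ whenever $\sigma\circ\tau_i=\tau_j$, and hence $\sigma(\tilde Q_i)=\tilde Q_j$ by uniqueness of monic $m$-th roots in characteristic $0$. Since $\Gal_\QQ$ acts transitively on $\Hom(E,\Qbar)$ (as $E/\QQ$ is an extension of fields), it permutes $\{\tilde Q_1,\ldots,\tilde Q_r\}$—equivalently, the weight orbits $\{\Omega_1,\ldots,\Omega_r\}$ via $\psi_v$—transitively. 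Combined with $\psi_v(\Gal_L)$-transitivity on each $\Omega_i$, this gives $\psi_v(\Gal_\QQ)$ transitive on $\Omega$ and hence $Q$ irreducible. The main technical point will be the careful identification of the $\G_A$-isotypic and $E$-isotypic decompositions of $V\otimes\Qbar$ together with the matching of the factors $\tilde Q_i$ to the weight orbits $\Omega_i$; the hypothesis $\widetilde E\subseteq L$ enters precisely to ensure that $\Gal_L$ fixes each embedding $\tau_i$, which is consistent with $\psi_v(\Gal_L)=W(\G_A,\T)$ stabilizing each $\Omega_i$.
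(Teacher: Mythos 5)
Your proposal follows essentially the same approach as the paper's proof. Both arguments begin from Lemma~\ref{L:reduce to poly}, reduce to showing the separable polynomial $Q$ with $P_{A_v}=Q^m$ is irreducible, use the hypothesis $\psi_v(\Gal_L)=W(\G_A,\T)$ together with Lemma~\ref{L:Serre strong}(ii) to handle transitivity within each of the $r$ Weyl orbits $\Omega_i$, and then use the $E$-structure on the Tate module to match the $r$ orbits with the $r$ embeddings $E\hookrightarrow\Qbar$ and invoke transitivity of $\Gal_\QQ$ on $\Hom(E,\Qbar)$. The paper phrases the last step in terms of the $E[x]$-valued polynomial $P_{A_v,E}=Q_v^m$ and the identity $P_{A_v}=N_{E/\QQ}(Q_v)^m=\prod_{\sigma\colon E\hookrightarrow L}\sigma(Q_v)^m$, rather than in terms of an action on $\Omega$, but the content is identical.

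One step in your writeup needs tightening. You assert that ``any $\sigma\in\Gal_\QQ$ sends $V_\ell(A)_\tau$ to $V_\ell(A)_{\sigma\circ\tau}$,'' but the spaces $V_\ell(A)_\tau$ carry an action of $\Gal_{\QQ_\ell}$, not of all of $\Gal_\QQ$, so this is not literally a Galois-equivariance statement about $V_\ell(A)\otimes\Qbar_\ell$. The correct mechanism (which is what the paper uses, and what your $\tilde Q_i$ implicitly capture) is that the $\tilde Q_i$ are the images $\tau_i(Q_v)$ of a single polynomial $Q_v\in E[x]$, and then $\sigma(\tilde Q_i)=\sigma(\tau_i(Q_v))=(\sigma\circ\tau_i)(Q_v)=\tilde Q_j$ holds purely algebraically for $\sigma\in\Gal_\QQ$. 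You should also be slightly careful with the phrase ``the pieces $V_\ell(A)_\tau$ are exactly the simple $\G_A$-summands'': when $m>1$, each $E$-isotypic piece is a $\G_A$-isotypic (not simple) summand of the form $V_i^{\oplus m}$ (equivalently, an irreducible summand with each weight of multiplicity $m$, as the paper says), though since only the weight set $\Omega_i$ matters this does not affect the argument. With these two points made precise, the proof is complete and matches the paper's.
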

\begin{proof}
The isomorphism $\gamma\colon X(\T)\to \Phi_{A_v}$ of \S\ref{SS:Galois action} gives a bijection between $\Omega$ and $\calW_{A_v}$.   By Lemma~\ref{L:Serre strong}(ii), the action of $W(\G_A,\T)$ partitions $\Omega$ into $r$ orbits.   Since $\psi_v(\Gal_L)=W(\G_A,\T)$ by assumption, we deduce that the $\Gal_L$-action partitions $\calW_{A_v}$ into $r$ orbits; equivalently, $P_{A_v}(x)$ has $r$ distinct irreducible factors in $L[x]$.     From Lemma~\ref{L:reduce to poly}, we know that $P_{A_v}(x)$ is the $m$-th power of a separable polynomial.  So there are distinct monic irreducible polynomials $Q_1(x),\ldots, Q_r(x) \in L[x]$ such that
\begin{equation} \label{E:factorization early}
P_{A_v}(x)= Q_1(x)^m\cdots Q_r(x)^m.
\end{equation}
We will identify these $r$ irreducible factors, but we first recall some basic facts about $\lambda$-adic representations where $\lambda$ is a finite place of $E$.  A good exposition on $\lambda$-adic representations can be found in \cite{MR0457455}*{I-II}.  

The ring $\End(A)\otimes_\ZZ\QQ$, and hence also the field $E$, acts on $V=H_1(A(\CC),\QQ)$.   Therefore, $V_\ell(A)=V\otimes_{\QQ}\QQ_\ell$ is a module over $E_\ell:= E\otimes_\QQ \QQ_\ell$.   We have $E_\ell = \prod_{\lambda |\ell} E_\lambda$, where $\lambda$ runs over the the places $\lambda$ of $E$ dividing $\ell$.	Setting $V_\lambda(A):= V_\ell(A)\otimes_{E_\ell} E_\lambda$, we have a decomposition $V_\ell(A)= \bigoplus_{\lambda|\ell} V_\lambda(A)$.  Since $E\subseteq \End(A)\otimes_\ZZ \QQ $, the action of $\Gal_K$ on $V_\ell(A)$ is $E_\ell$-linear.  Therefore, $\Gal_K$ acts $E_\lambda$-linearly on $V_\lambda(A)$ and hence defines a Galois representation
\[
\rho_{A,\lambda} \colon \Gal_K \to \Aut_{E_\lambda}(V_\lambda(A)).
\]
(Of course when $E=\QQ$, we have our usual $\ell$-adic representations.)    For each $\lambda$, we will denote the rational prime it divides by $\ell(\lambda)$.    Since $A$ has good reduction at $v$, there is a polynomial $P_{A_v,E}(x)\in E[x]$ such that 
\[
P_{A_v,E}(x) = \det(xI - \rho_{A,\lambda}(\Frob_v))
\]
for all finite places $\lambda$ of $E$ for which $v\nmid \ell(\lambda)$.  The connection with our polynomial $P_{A_v}(x)\in \QQ[x]$ is that
\[
P_{A_v}(x) = N_{E/\QQ}(P_{A_v,E}(x)),
\]
cf.~\cite{MR0222048}*{11.8--11.10}.

Choose a prime $\ell$ that splits completely in $E$ for which $v\nmid \ell$.  We then have a decomposition $V_\ell(A)=\prod_{\lambda|\ell} V_\lambda(A)$ and $\Gal_K$ acts on each of the $r=[E:\QQ]$ vector spaces $V_\lambda(A)$.    This implies that $V_\lambda(A)$ is a representation of $\G_{A,\ell}=\G_{A,\QQ_\ell}$ for each $\lambda$ dividing $\ell$.

Using Lemma~\ref{L:Serre strong}, we deduce that $V_\lambda(A)\subseteq V\otimes_\QQ \QQ_\ell$ is an absolutely irreducible representation of $\G_{A,\QQ_\ell}$ and that each weight has multiplicity $m$.   Therefore, $P_{A_v,E}(x)$ is the $m$-th power of a unique monic polynomial $Q_v(x)\in E[x]$, and hence $P_{A_v}(x)=N_{E/\QQ}(P_{A_v,E}(x))=N_{E/\QQ}(Q_v(x))^m$.  So
\[
P_{A_v}(x)= \prod_{\sigma\colon E\hookrightarrow L} \sigma(Q_v(x))^m
\]
where the product is over the $r$ field embeddings of $E$ into $L$ (this uses our assumption that $\widetilde{E}\subseteq L$).  From our factorization (\ref{E:factorization early}), we deduce that the polynomials $\sigma(Q_v(x))$ are irreducible over $L[x]$.  In particular, $Q_v(x)$ is irreducible over $E$.     That $Q_v(x)$ is irreducible in $E[x]$ implies that $P_{A_v}(x) =N_{E/\QQ}(Q_v(x))^m$ is a power of some irreducible polynomial over $\QQ$.   Since $P_{A_v}(x)\in \ZZ[x]$ is the $m$-th power of a separable polynomial, we deduce that $P_{A_v}(x)$ is the $m$-th power of an irreducible polynomial.
\end{proof}

Fix a finite extension $L$ of $k_{\G_A}$ which contains $\widetilde{E}$.   By Proposition~\ref{P:sieving}, there is a subset $\calT\subseteq \Sigma_K$ with density 0 such that $\psi_v(\Gal_L)=W(\G_A,\T)$ for all $v\in \calS_A-\calT$.  By Lemma~\ref{L:big enough implies simple} and Lemma~\ref{L:reduce to poly}, we deduce that for all $v\in\calS_A-\calT$, $A_v$ is isogenous to $B^m$ where $B$ is an absolutely simple abelian variety over $\FF_v$.   Our theorem follows by noting that $\calS_A$ has density 1 and $\calT$ has density 0.

\section{Proof of Theorem~\ref{T:general}} \label{S:general proof}

After replacing $A$ by an isogenous abelian variety, we may assume that $A=\prod_{i=1}^s A_i^{n_i}$ where the $A_i$ are simple abelian varieties over $K$ which are pairwise non-isogenous.    

\begin{lemma} \label{L:factors are OK}
Fix an integer $1\leq i\leq s$.   The abelian variety $A_i/K$ is absolutely simple, $K_{A_i}^\conn=K$, and the Mumford-Tate conjecture for $A_i$ holds.
\end{lemma}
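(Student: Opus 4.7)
The plan is to exploit the fact that each $A_i$ is an isogeny factor of $A$, so that the Mumford-Tate and $\ell$-adic monodromy groups of $A_i$ are quotients of those of $A$; the three conclusions then propagate from $A$ to $A_i$.

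First I would identify the natural projections. For any prime $\ell$, the decomposition $A \sim \prod_j A_j^{n_j}$ gives a $\Gal_K$-equivariant projection $V_\ell(A) \twoheadrightarrow V_\ell(A_i)$, and $\rho_{A_i,\ell}$ is obtained from $\rho_{A,\ell}$ by post-composing with this projection. Consequently $\G_{A_i,\ell}$ is the image of $\G_{A,\ell}$ under the map $\GL_{V_\ell(A)} \to \GL_{V_\ell(A_i)}$ induced by the projection. Since $K_A^\conn = K$ forces $\G_{A,\ell}$ to be connected, its image $\G_{A_i,\ell}$ is connected as well; as this holds for every $\ell$, we obtain $K_{A_i}^\conn = K$.

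Next I would deduce the Mumford-Tate conjecture for $A_i$. The same projection argument applies on the Betti side: using the K\"unneth decomposition of $H_1(A(\CC),\QQ)$, the Mumford-Tate group $\G_{A_i}$ is the image of $\G_A$ under the projection $\GL_{V} \twoheadrightarrow \GL_{V_i}$, where $V = H_1(A(\CC),\QQ)$ and $V_i = H_1(A_i(\CC),\QQ)$. The comparison isomorphism $V_\ell(A)\cong V\otimes_\QQ\QQ_\ell$ is compatible with the K\"unneth decomposition, so the two projections are intertwined. By hypothesis the Mumford-Tate conjecture for $A$ gives $\G_{A,\ell}^\circ = \G_{A,\QQ_\ell}$, and combining this with $K_A^\conn = K$ yields $\G_{A,\ell} = \G_{A,\QQ_\ell}$. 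Applying the projection to both sides, we conclude $\G_{A_i,\ell} = \G_{A_i,\QQ_\ell}$, which in particular contains $\G_{A_i,\QQ_\ell}$ and so forces equality $\G_{A_i,\ell}^\circ = \G_{A_i,\QQ_\ell}$ as required.

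Finally I would establish absolute simplicity of $A_i$. Since we just verified $K_{A_i}^\conn = K$, Proposition~\ref{P:Faltings}(iii) applied to $A_i$ gives
\[
\End(A_{i,\Kbar}) \otimes_\ZZ \QQ \;=\; \End(A_i) \otimes_\ZZ \QQ.
\]
Because $A_i$ is simple over $K$, the right-hand side is a division algebra. If $A_{i,\Kbar}$ were isogenous to $B^k$ for some absolutely simple $B$ and $k \geq 2$, then $\End(A_{i,\Kbar}) \otimes \QQ \cong M_k(\End(B)\otimes\QQ)$ would contain nontrivial zero divisors, contradicting that it is a division algebra. Hence $k = 1$ and $A_i$ is absolutely simple. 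The main thing to verify carefully is the compatibility of the two projections (Betti and $\ell$-adic) with the comparison isomorphism, which is where any hidden subtlety would live; this is however a standard consequence of the functoriality of singular and $\ell$-adic homology under the projection $A \to A_i^{n_i} \to A_i$.
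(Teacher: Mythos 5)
Your treatment of connectedness (hence $K_{A_i}^\conn = K$) and of the Mumford--Tate conjecture for $A_i$ follows the same route the paper takes: project $\G_{A,\ell}$ and $\G_A$ onto the factor corresponding to $A_i$, use that the image of a connected group is connected, and use compatibility of the Betti and $\ell$-adic projections under the comparison isomorphism. Both you and the paper leave this last compatibility at a sketch level, so no issue there.

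For absolute simplicity you take a genuinely different route. The paper argues via Faltings' semisimplicity directly: since $A_i$ is $K$-simple, $V_\ell(A_i)$ is irreducible as a $\QQ_\ell[\Gal_K]$-module and hence as a $\G_{A_i,\ell}$-representation; since $\G_{A_i,\ell}$ is connected, $\rho_{A_i,\ell}(\Gal_L)$ is Zariski dense in it for every finite extension $L/K$, so $V_\ell(A_i)$ stays irreducible over $\Gal_L$ and $A_{i,L}$ is simple for all such $L$. You instead apply Proposition~\ref{P:Faltings}(iii) to $A_i$ (legitimate, since $K_{A_i}^\conn = K$ has just been shown) to conclude $\End(A_{i,\Kbar})\otimes_\ZZ\QQ = \End(A_i)\otimes_\ZZ\QQ$ is a division algebra, and then read off absolute simplicity from the endomorphism algebra. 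This is a valid and arguably tidier approach, but as written the last step has a small gap: you only rule out $A_{i,\Kbar}\sim B^k$ with $k\geq 2$. A priori $A_{i,\Kbar}$ could decompose as $\prod_j B_j^{k_j}$ with several pairwise non-isogenous absolutely simple $B_j$ --- a $K$-simple abelian variety need not be isotypic over $\Kbar$ in general (Weil restrictions give counterexamples). In that case $\End(A_{i,\Kbar})\otimes\QQ \cong \prod_j M_{k_j}\bigl(\End(B_j)\otimes\QQ\bigr)$, which has nontrivial idempotents whenever there is more than one factor and nontrivial zero divisors whenever some $k_j\geq 2$; either way it is not a division algebra, so in fact there is a single factor with $k_1=1$. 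Once you treat the general decomposition, the argument is complete.
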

\begin{proof}
Take any prime $\ell$.  Viewing $A_i$ as one of the factors of $A$, we may view $V_\ell(A_i)$ as a subspace of $V_\ell(A)$ stable under $\Gal_K$.   Restriction to $V_\ell(A_i)$ defines a homomorphism $\pi\colon \G_{A,\ell}\to \G_{A_i,\ell}$ of algebraic groups for which $\pi(\G_{A,\ell})$ is Zariski dense in $\G_{A_i,\ell}$.   Since $\G_{A,\ell}$ is connected by our assumption $K_A^\conn=K$, we deduce that $\G_{A_i,\ell}$ is also connected.  Therefore, $K_{A_i}^\conn=K$.

Since $A_i$ is simple, we know by Faltings that $V_\ell(A_i)$ is an irreducible $\QQ_\ell[\Gal_K]$-module, and is hence an irreducible representation of $\G_{A_i,\ell}$.   Take any finite extension $L$ of $K$.   The group $\rho_{A_i,\ell}(\Gal_L)$ is Zariski dense in $\G_{A_i,\ell}$ since it is connected.   So $V_\ell(A_i)$ is an irreducible $\QQ_\ell[\Gal_L]$-module, and hence $A_{i,L}$ is simple.    Since $L$ was an arbitrary finite extension of $K$, we deduce that $A_i$ is absolutely simple.

Again by viewing $A_i$ as one of the factors of $A$, we can view $H_1(A_i(\CC),\QQ)$ as a subspace of $H_1(A(\CC),\QQ)$, which induces a homomorphism $\G_{A}\to \G_{A_i}$.   One can show that this is compatible with the corresponding map $\pi$, and that the Mumford-Tate conjecture for $A_i$ follows from our assumption that the Mumford-Tate conjecture for $A$ holds.
\end{proof}

Fix an integer $1\leq i\leq s$.  Lemma~\ref{L:factors are OK} allows us to apply Theorem~\ref{T:main}(ii) to each $A_i$.  By Theorem~\ref{T:main}(ii), there is a subset $\calT_i\subseteq \Sigma_K$ with density 0 such that for all $v\in \Sigma_K-\calT_i$, $A_i$ modulo $v$ is isogenous to $ B_{i,v}^{m_i }$ where $B_{i,v}$ is an absolutely simple abelian variety over $\FF_v$.   Set $\calT=\bigcup_{i=1}^s \calT_i$; it has density 0.   For all $v\in \Sigma_K-\calT$, we find that $A_v$ is isogenous to a product $\prod_{i=1}^s B_{i,v}^{m_in_i}$ where each $B_{i,v}$ is absolutely simple over $\FF_v$.

It remains to show that the abelian varieties $B_{1,v},\ldots, B_{s,v}$ are pairwise non-isogenous for all $v\in \Sigma_k-\calT$ away from a set of density 0.   It suffices to consider fixed $1\leq i<j \leq s$.  Fix a prime $\ell$.  If $B_{i,v}$ is isogenous to $B_{j,v}$, then  $A_i^{m_j}$ and $A_j^{m_i}$ modulo $v$ are isogenous, and hence
\[
m_j\tr(\rho_{A_i,\ell}(\Frob_v))=\tr(\rho_{{A_i}\!^{m_j},\ell}(\Frob_v))=\tr(\rho_{{A_j}\!^{m_i},\ell}(\Frob_v))=m_i\tr(\rho_{{A_j},\ell}(\Frob_v))
\] 
if $v\nmid\ell$.  Let $\mathcal{P}$ be the set of $v\in \Sigma_K$ for which $A_i$ and $A_j$ have good reduction and $m_j\tr(\rho_{A_i,\ell}(\Frob_v))=m_i\tr(\rho_{{A_j},\ell}(\Frob_v))$.  To finish the proof, it suffices to show that $\mathcal{P}$ has density 0.

 We can view $\G_{A_i\times A_j,\ell}$ as an algebraic subgroup of $\G_{A_i,\ell}\times \G_{A_j,\ell}$.   Let $W/\QQ_\ell$ be the subvariety of $\G_{A_i,\ell}\times \G_{A_j,\ell}$ defined by the equation $m_j \tr(g) = m_i \tr(g')$ with $(g,g')\in \G_{A_i,\ell}\times \G_{A_j,\ell}$.    
 
First suppose that $\G_{A_i\times A_j,\ell} \subseteq W$.  Then $\tr\circ \rho_{{A_i}\!^{m_j},\ell}=m_j\cdot \tr\circ \rho_{A_i,\ell} =m_i\cdot \tr\circ\rho_{{A_j},\ell} =\tr\circ \rho_{{A_j}\!^{m_i},\ell}$, and hence $A_i^{m_j}$ and $A_j^{m_i}$ are isogenous by the work of Faltings.    Since $A_i$ and $A_j$ are simple, we deduce that they are isogenous; this contradicts our factorization of $A$.

 Therefore, $\G_{A_i\times A_j,\ell} \not \subseteq W$.   Arguing as in Lemma~\ref{L:factors are OK}, we find that the group $\G_{A_i\times A_j,\ell}$ is connected.   Since $\G_{A_i\times A_j,\ell}$ is connected,  $\G_{A_i\times A_j,\ell}\cap W$ is of codimension at least 1 in $\G_{A_i\times A_j,\ell}$.   The Chebotarev density theorem then implies that $\mathcal{P}$ has density 0, as desired.   
  
\section{Remarks on Conjecture~\ref{C:main}} \label{S:conj ends}

We restate Conjecture~\ref{C:main}, but now emphasize that $\Pi(\G_A)$ is the group defined in \S\ref{SS:Weyl group}.

\begin{conj} \label{C:conj final}
Let $A$ be a non-zero abelian variety defined over a number field $K$ that satisfies $K_A^{\conn}=K$.   Then $\Gal(\QQ(\calW_{A_v})/\QQ)\cong \Pi(\G_A)$ for all $v\in \Sigma_K$ away from a subset with natural density 0.
\end{conj}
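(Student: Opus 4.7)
The plan is to prove Conjecture~\ref{C:conj final} conditionally on the Mumford--Tate conjecture for $A$, by extending the analysis underlying Theorem~\ref{T:Weil under MT} from $\Gal_L$ (with $L\supseteq k_{\G_A}$) to the full group $\Gal_\QQ$. First I would reduce to the absolutely simple case following the strategy of \S\ref{S:general proof}: both the field $\QQ(\calW_{A_v})$ and the candidate group $\Pi(\G_A)$ admit natural descriptions in terms of the simple isogeny factors of $A$.

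Assume now that $A$ is absolutely simple, fix a maximal torus $\T$ of $\G_A$, and recall the homomorphism $\psi_v\colon\Gal_\QQ\to\Aut(X(\T))$ from \S\ref{SS:Galois action}, characterized by $\sigma(\alpha(t_v))=(\psi_v(\sigma)\alpha)(t_v)$ for all $\alpha\in X(\T)$ and $\sigma\in\Gal_\QQ$. Under the Mumford--Tate conjecture, the map $\gamma\colon X(\T)\to\Phi_{A_v}$ is an isomorphism (Lemma~\ref{L:gamma details}); since $\calW_{A_v}$ generates $\Phi_{A_v}$, $\psi_v$ descends to an isomorphism $\Gal(\QQ(\calW_{A_v})/\QQ)\xrightarrow{\sim}\psi_v(\Gal_\QQ)$. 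It therefore suffices to show $\psi_v(\Gal_\QQ)=\Pi(\G_A,\T)$ for $v\in\calS_A$ outside a set of density $0$.

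For the inclusion $\psi_v(\Gal_\QQ)\subseteq\Pi(\G_A,\T)$, apply Noot's theorem: for each $\sigma\in\Gal_\QQ$ and $v\in\calS_A$,
\[\cl_{\G_A}'(\sigma(t_v))=\sigma(F_v')=F_v'=\cl_{\G_A}'(t_v),\]
so $\sigma(t_v)=\beta_\sigma(t_v)$ for some $\beta_\sigma\in\Gamma$. A direct computation modeled on Lemma~\ref{L:Weyl class of psi} yields the explicit formula $\psi_v(\sigma)=(\beta_\sigma^{-1})_*\circ\varphi_\T(\sigma)$, so $\psi_v(\Gal_\QQ)\subseteq\Gamma\cdot\varphi_\T(\Gal_\QQ)$ inside $\Aut(X(\T))$. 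For the reverse inclusion, Proposition~\ref{P:sieving} with $L=k_{\G_A}$ gives $\psi_v(\Gal_{k_{\G_A}})=W(\G_A,\T)$ for $v$ outside a set of density $0$, and the induced map $\Gal_\QQ\to\Pi(\G_A,\T)/W(\G_A,\T)$ agrees with $\sigma\mapsto\varphi_\T(\sigma)\bmod W(\G_A,\T)$, since $\beta_\sigma$ is well-defined modulo $W(\G_A,\T)$ once a representative of $F_v'$ is fixed.

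The main obstacle will be to verify the containment $\Gamma\subseteq\Pi(\G_A,\T)$ needed to convert the upper bound $\Gamma\cdot\varphi_\T(\Gal_\QQ)$ into $\Pi(\G_A,\T)$. When $\G_A^\der$ has no irreducible factor of type $D_n$ with $n\geq 4$, the analysis behind Lemma~\ref{L:new Jordan} yields $\Gamma=W(\G_A,\T)$ and the containment is automatic. In the presence of such $D_n$-factors, however, $\Gamma/W(\G_A,\T)$ is non-trivial, reflecting the outer $\mathbf{O}(2n)/\SO(2n)$-automorphism, and one must show that this outer contribution is already realized by $\varphi_\T(\Gal_\QQ)$. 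This seems to require a careful case-by-case understanding of the possible Mumford--Tate groups with $D_n$-components and the corresponding Galois action on $X(\T)$, and this is where the bulk of the technical work will lie. Removing the Mumford--Tate hypothesis is a separate difficulty, since without it $\gamma$ is only surjective and $\psi_v(\Gal_\QQ)$ captures only a quotient of $\Gal(\QQ(\calW_{A_v})/\QQ)$.
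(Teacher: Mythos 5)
What you are proposing is essentially the paper's proof of Theorem~\ref{T:conj final case}, but with Conjecture~\ref{C:Frob conj} replaced by the weaker Theorem~\ref{T:Noot} (Noot), and this substitution is exactly where the argument breaks. The paper does \emph{not} prove Conjecture~\ref{C:conj final} under the Mumford--Tate conjecture alone; Theorem~\ref{T:conj final case} assumes in addition that a class $F_v\in\Conj(\G_A)(\QQ)$ as in Conjecture~\ref{C:Frob conj} exists for almost all $v$. With that stronger input, $\sigma(t_v)$ differs from $t_v$ by an element $w_\sigma$ of $W(\G_A,\T)$, so $\psi_v(\sigma)=\sigma(\alpha)\circ w_\sigma$ lands in $\Pi(\G_A,\T)$ automatically and the induced map to $\Pi/W$ is $\varphi_\T$. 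Using only Noot's theorem you get $\beta_\sigma\in\Gamma$, not in $W(\G_A,\T)$, and since $\Gamma$ need not be contained in $\Pi(\G_A,\T)$ (indeed $\Gamma/W(\G_A,\T)$ is nontrivial precisely when $(\G_A^\der)_{\Qbar}$ has $\SO(2k)$ factors, $k\ge 4$), you cannot conclude $\psi_v(\Gal_\QQ)\subseteq\Pi(\G_A,\T)$. You flag this as ``the main obstacle'' and hope it can be disposed of by a case analysis, but no such analysis appears in the paper; Remark~\ref{R:Noot good}(i) records that Noot suffices only in the $D_n$-free case. So your route, carried out honestly, would only reprove Theorem~\ref{T:conj final case} in that restricted setting.

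The opening reduction to the absolutely simple case is a second gap. It is not merely bookkeeping: neither $\Gal(\QQ(\calW_{A_v})/\QQ)$ nor $\Pi(\G_A)$ factors cleanly as a product over the simple isogeny factors, because $\G_A$ can be a proper subgroup of $\prod_i\G_{A_i}$ and the Weil numbers of different factors can be multiplicatively related. The paper itself explicitly leaves this open (Remark~\ref{R:Noot good}(ii)): the input from Wintenberger used in Proposition~\ref{P:image of Galois mod ell}, and hence in Proposition~\ref{P:sieving}, is only available for absolutely simple $A$. So that reduction cannot be waved through ``following the strategy of \S\ref{S:general proof}''; that section handles a weaker assertion (factorization of $A_v$ up to isogeny), not the full Galois group of the Weil numbers.
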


When $A$ is also absolutely simple, we shall show that this conjecture follows from other well-known conjectures which have already been discussed.

\begin{thm} \label{T:conj final case}
Let $A$ be an absolutely simple abelian variety defined over a number field $K$ that satisfies $K_A^{\conn}=K$.   Suppose that the Mumford-Tate conjecture for $A$ holds and that a class $F_v\in \Conj(\G_A)(\QQ)$ as in Conjecture~\ref{C:Frob conj} exists for all $v\in \Sigma_K$ away from a set of density 0.   Then Conjecture~\ref{C:conj final} for $A$ is true.
\end{thm}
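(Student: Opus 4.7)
The strategy is to run the argument for Theorem~\ref{T:Weil under MT} with the stronger hypothesis of Conjecture~\ref{C:Frob conj} in place of Noot's theorem~\ref{T:Noot}, which lets us control the representation $\psi_v$ on all of $\Gal_\QQ$ and not merely on $\Gal_{k_{\G_A}}$. Fix a maximal torus $\T \subseteq \G_A$ defined over $\QQ$. For each $v \in \calS_A$ admitting a rational class $F_v \in \Conj(\G_A)(\QQ)$ as in Conjecture~\ref{C:Frob conj}, choose $t_v \in \T(\Qbar)$ with $\cl_{\G_A}(t_v) = F_v$ and $\det(xI-t_v) = P_{A_v}(x)$; this is possible since every semisimple $\G_A(\Qbar)$-conjugacy class meets $\T(\Qbar)$. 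Under the Mumford--Tate conjecture, Lemma~\ref{L:gamma details} gives that $\gamma\colon X(\T)\to \Phi_{A_v},\,\alpha\mapsto \alpha(t_v)$ is an isomorphism, and we define $\psi_v\colon \Gal_\QQ\to \Aut(X(\T))$ by the rule $\sigma\circ\gamma=\gamma\circ\psi_v(\sigma)$. Because $\calW_{A_v}$ generates $\Phi_{A_v}$, the map $\psi_v$ factors through an injection $\Gal(\QQ(\calW_{A_v})/\QQ)\hookrightarrow \Aut(X(\T))$, so the theorem reduces to showing that $\psi_v(\Gal_\QQ)=\Pi(\G_A,\T)$ for all $v\in\calS_A$ away from a set of density $0$.

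For each $\sigma\in\Gal_\QQ$, the rationality of $F_v$ gives $\cl_{\G_A}(\sigma(t_v))=\sigma(F_v)=F_v$, so $\sigma(t_v)$ is $\G_A(\Qbar)$-conjugate to $t_v$. Since both elements lie in $\T(\Qbar)$, there is a unique $w_\sigma\in W(\G_A,\T)$ with $\sigma(t_v)=w_\sigma^{-1}(t_v)$; uniqueness uses that $\langle t_v\rangle$ is Zariski-dense in $\T_{\Qbar}$, which holds because $v\in\calS_A$. Applying a character $\alpha\in X(\T)$ to this identity and comparing with the defining relation $\sigma(\alpha(t))=(\varphi_\T(\sigma)\alpha)(\sigma(t))$ of $\varphi_\T$, one obtains
\[
\psi_v(\sigma) = w_\sigma \cdot \varphi_\T(\sigma) \qquad \text{in } \Aut(X(\T)).
\]
Consequently $\psi_v(\Gal_\QQ)\subseteq \Pi(\G_A,\T)$, and the induced map $\Gal_\QQ \to \Pi(\G_A,\T)/W(\G_A,\T)$ agrees with the map obtained from $\varphi_\T$, which is surjective by the very definition of $\Pi(\G_A,\T)$. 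Note that $w_\sigma$ does not depend on $v$ modulo $W(\G_A,\T)$, but its exact value can.

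To conclude, fix any finite extension $L$ of $k_{\G_A}$. By Proposition~\ref{P:sieving}, $\psi_v(\Gal_L)=W(\G_A,\T)$ for all $v\in\calS_A$ outside a subset of density $0$; for such $v$, the image $\psi_v(\Gal_\QQ)$ contains $W(\G_A,\T)$ and surjects onto $\Pi(\G_A,\T)/W(\G_A,\T)$, hence equals $\Pi(\G_A,\T)$. Since $\calS_A$ has density $1$, we obtain $\Gal(\QQ(\calW_{A_v})/\QQ)\cong \Pi(\G_A,\T)\cong \Pi(\G_A)$ for almost all $v$. The only delicate point is the identity $\psi_v(\sigma)=w_\sigma\varphi_\T(\sigma)$: because $\T$ need not be split over $\QQ$, $\sigma$ acts nontrivially on both $\T(\Qbar)$ and $X(\T)$, and the role of the stronger assumption $F_v\in\Conj(\G_A)(\QQ)$ (versus Noot's $F_v'\in\Conj'(\G_A)(\QQ)$) is precisely to supply the Weyl element $w_\sigma$ accounting for this mismatch, thereby extending Lemma~\ref{L:in the Weyl group} from $\Gal_{k_{\G_A}}$ to all of $\Gal_\QQ$.
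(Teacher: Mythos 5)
Your proposal is correct and follows essentially the same route as the paper: choose $t_v\in\T(\Qbar)$ with $\cl_{\G_A}(t_v)=F_v$, use the $\Gal_\QQ$-stability and the simply transitive $W(\G_A,\T)$-action on $\{t\in\T(\Qbar):\cl_{\G_A}(t)=F_v\}$ to derive $\psi_v(\sigma)=w_\sigma\varphi_\T(\sigma)\in\Pi(\G_A,\T)$, observe the quotient to $\Pi(\G_A,\T)/W(\G_A,\T)$ is the one induced by $\varphi_\T$ and hence surjective, and then invoke Proposition~\ref{P:sieving} to fill in $W(\G_A,\T)$. (The parenthetical remark that $w_\sigma$ ``does not depend on $v$ modulo $W(\G_A,\T)$'' is vacuous as stated since $w_\sigma\in W(\G_A,\T)$, but this is only an aside and does not affect the argument.)
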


\begin{remark} \label{R:Noot good}
\begin{romanenum}
\item
Let $A$ be an absolutely simple abelian variety defined over a number field $K$ such that $K_A^{\conn}=K$ and such that the Mumford-Tate conjecture for $A$ holds.  Suppose further that $(\G_A^\der)_\Qbar$ has no normal subgroups isomorphic to $\SO(2k)_\Qbar$ with $k\geq 4$.  Theorem~\ref{T:Noot} then implies that a class $F_v\in \Conj(\G_A)(\QQ)$ as in Conjecture~\ref{C:Frob conj} exists for all $v\in \calS_A$ (where $\calS_A$ is the set of density 1 from \S\ref{SS:calSA}).
\item
Theorem~\ref{T:conj final case} should remain true without the assumption that $A$ is absolutely simple.   We required this assumption in order to apply Proposition~\ref{P:sieving}.  That proposition in turn needed the assumption in order to use Proposition~\ref{P:image of Galois mod ell} (note that in \cite{MR1944805}*{\S3.4}, Wintenberger shows that the special fiber of $\calG_{A,\ell}$ agrees with the reductive group constructed by Serre, but only in the case where $A$ is absolutely simple).
\item
Under the stronger hypotheses of Theorem~\ref{T:conj final case} it is easier to prove Theorem~\ref{T:main}.   Using that $\Pi(\G_A,\T)$ acts transitively on the set of weights $\Omega$ (Lemma~\ref{L:Serre strong}), one can show that if $\Gal(\QQ(\calW_{A_v})/\QQ))\cong \Pi(\G_A)$, then $\Gal_\QQ$ acts transitively on the set $\calW_{A_v}$.   This avoids the more complicated argument in the proof of Lemma~\ref{L:big enough implies simple}.
\end{romanenum}
\end{remark}

\subsection{Example: abelian varieties of Mumford type} \label{SS:Mumford type}

There are abelian varieties $A/K$ of dimension 4 with $\End(A_{\Kbar})=\ZZ$ for which $\G_A \not\cong \GSp_{8,\QQ}$.   We say that such an abelian variety is of \defi{Mumford type}.  Such abelian varieties were show to exist by Mumford in \cite{MR0248146}.    For further details about such varieties, see \cite{MR1739726}.  

Let $A$ be an abelian variety over a number field $K$ that is of Mumford type and satisfies $K_A^{\conn}=K$.   Let $\G_A^\der$ be the derived subgroup of $\G_A$.   One can show that the group $\G_A^\der$ is simple over $\QQ$ and that $(\G_A^\der)_{\Qbar}$ is isogenous to $\SL_{2,\Qbar}^3$.   The Mumford-Tate conjecture for $A$ holds by Theorem~5.15 of \cite{MR1603865}.    By Theorem~\ref{T:main}, we deduce that the reduction $A_v/\FF_v$ is absolutely simple for all $v\in \Sigma_K$ away from a set of density 0; this is Theorem~C of \cite{Achter-effective}.    

By Theorem~\ref{T:conj final case} and Remark~\ref{R:Noot good}(i), we also deduce that $\Gal(\QQ(\calW_{A_v})/\QQ)$ is isomorphic to $\Pi(\G_A)$ for all $v\in \Sigma_K$ away from a set of density 0.

Finally, let us describe the possibilities for the group $\Pi(\G_A)$.  The center of $\G_A$ is the group of homotheties $\G_m$ since $\End(A_{\Kbar})=\ZZ$.  Since the center of $\G_A$ is split, we find that the groups $\Pi(\G_A)$ and $\Pi(\G_A^\der)$ are isomorphic.    Let $\Phi$ be a root system associated to $\G_A^\der$.   Using that $\G_A^\der$ is semisimple, we see that the group $\Pi(\G_A)$ is isomorphic to a subgroup of $\Aut(\Phi)$ which contains the Weyl group $W(\Phi)\cong W(\G_A)\cong (\ZZ/2\ZZ)^3$.   The group $\Aut(\Phi)$ is isomorphic to the semidirect product $(\ZZ/2\ZZ)^3 \rtimes S_3$, where $S_3$ acts on $(\ZZ/2\ZZ)^3$ by permuting coordinates.   Using that the algebraic group $\G_A^\der$ is simple over $\QQ$, we find that $\Pi(\G_A)$ must contain an element of order 3.   Therefore,  $\Pi(\G_A)$ is isomorphic to $(\ZZ/2\ZZ)^3 \rtimes S_3$ or $(\ZZ/2\ZZ)^3 \rtimes A_3$.

\begin{remark}
We have just shown that $P_{A_v}(x)$ is irreducible and $\Gal(\QQ(\calW_{A_v})/\QQ)\cong \Pi(\G_A)$ for all $v\in \Sigma_K$ away from a set of density 0.    Fix such a place $v$.   Even though $P_{A_v}(x)$ is irreducible, we find that $P_{A_v}(x) \pmod{\ell}$ is reducible in $\FF_\ell[x]$ for \emph{every} prime $\ell$ (one uses that the polynomial $P_{A_v}(x)$ has degree 8 while $\Pi(\G_A)$ has no elements of order 8).
\end{remark}

\subsection{Proof of Theorem~\ref{T:conj final case}}
Fix an embedding $K\subseteq \CC$ and let $\G_A$ be the Mumford-Tate group of $A$.   Choose a maximal torus $\T$ of $\G_A$.     By assumption, there is a set $\calS\subseteq \Sigma_K$ with density 1 such that an element $F_v\in \Conj(\G_A)(\QQ)$ as in Conjecture~\ref{C:Frob conj} exists for all $v\in \calS$.    Let $\calS_A$ be the density 1 subset of $\Sigma_K$ from \S\ref{SS:calSA}.   Without loss of generality, we may assume that $\calS\subseteq \calS_A$.

Take a place $v\in\calS$, and define the set
\[
\mathcal{J}_v := \big\{ t \in \T(\Qbar) : \cl_{\G_A}(t)=F_v \big\}.
\]
Choose an element $t_v\in\mathcal{J}_v$; it satisfies $\det(xI-t_v)=P_{A_v}(x)$.  By Lemma~\ref{L:gamma details}, the map $\gamma_v\colon X(\T)\to \Phi_{A_v},\, \alpha \mapsto \alpha(t_v)$ is an isomorphism of free abelian groups (this uses our assumption that the Mumford-Tate conjecture for $A$ holds).   There is thus a unique homomorphism $\psi_v\colon \Gal_\QQ \to \Aut(X(\T))$ such that $\sigma(\alpha(t_v))=\big(\psi_v(\sigma)\alpha\big)(t_v)$ for all $\sigma\in \Gal_\QQ$ and $\alpha\in X(\T)$.

We will now show that the image of $\psi_v$ lies in $\Pi(\G_A,\T)$.   Conjugation induces an action of $W(\G_A,\T)$ on $\mathcal{J}_v$; this action is simply transitive (the group $W(\G_A,\T)$ acts faithfully on $\T_\Qbar$ since the subgroup generated by each $t\in \mathcal{J}_v$ is Zariski dense in $\T_\Qbar$).   Since $F_v$ and $\cl_{\G_A}$ are defined over $\QQ$, the set $\mathcal{J}_v$ is also stable under the action of $\Gal_{\QQ}$.   So for each $\sigma\in \Gal_\QQ$, there is a unique $w_\sigma \in W(\G_A,\T)$ such that $\sigma(t_v)=w_\sigma^{-1}(t_v)$.   For $\alpha\in X(\T)$, we have
\[
\sigma(\alpha(t_v)) = \sigma(\alpha)(\sigma(t_v))=\sigma(\alpha)(w_\sigma^{-1}(t_v))= (\sigma(\alpha)\circ w_\sigma^{-1})(t_v).
\]
Therefore, 
 \begin{equation} \label{E:final psi formula}
 \psi_v(\sigma) \alpha = \sigma(\alpha) \circ w_\sigma
 \end{equation}
for all $\sigma\in\Gal_\QQ$ and $\alpha\in X(\T)$.  Since $w_\sigma \in W(\G_A,\T)$, we find that $\psi_v(\sigma)$ belongs to $\Pi(\G_A,\T)$ for all $\sigma\in \Gal_\QQ$.

Recall that $W(\G_A,\T)$ is a normal subgroup of $\Pi(\G_A,\T)$.   Define the homomorphism
\[
\bbar{\psi}_v\colon \Gal_\QQ \xrightarrow{\psi_v} \Pi(\G_A,\T) \twoheadrightarrow \Pi(\G_A,\T)/W(\G_A,\T).
\]
From (\ref{E:final psi formula}), we see that $\bbar{\psi}_v$ agrees with the composition of the homomorphism $\varphi_\T\colon \Gal_\QQ \to \Pi(\G_A,\T)$ from \S\ref{S:reductive} with the quotient map $\Pi(\G_A,\T)\to \Pi(\G_A,\T)/W(\G_A,\T)$.   In particular, we find that $\bbar{\psi}_v$ is surjective.   Therefore, we have $\psi_v(\Gal_\QQ)=\Pi(\G_A,\T)$ if and only if $\psi_v(\Gal_\QQ)\supseteq W(\G_A,\T)$.

Using Proposition~\ref{P:sieving}, we deduce that $\psi_v(\Gal_\QQ)=\Pi(\G_A,\T)$ for all $v\in \calS$ away from a set of density 0 (in fact, it would be much easier to prove Proposition~\ref{P:sieving} in the current setting since we do not have the extraneous group $\Gamma$ to deal with) .   Using that $\calW_{A_v}$ generates $\Phi_{A_v}$, we find that $\psi_v$ factors through an injective homomorphism $\Gal(\QQ(\calW_{A_v})/\QQ)\hookrightarrow \Pi(\G_A,\T)\cong \Pi(\G_A)$; the theorem follows immediately.

\section{Effective bounds} \label{S:large sieve}

For each place $v\in \Sigma_K$, we define $N(v)$ to be the cardinality of the field $\FF_v$.  For each subset $\scrS$ of $\Sigma_K$ and real number $x$, we define $\scrS(x)$ to be the set of $v\in \scrS$ that satisfy $N(v)\leq x$.   

Let $A$ be an absolutely simple abelian variety defined over a number field $K$ such that $K_A^\conn=K$.  Define the integer $m=[\End(A)\otimes_\ZZ\QQ:E]^{1/2}$ where $E$ is the center of the division algebra $\End(A)\otimes_\ZZ \QQ$.  Let $d$ and $r$ be the dimension and rank, respectively, of $\G_A$.  The following makes Theorem~\ref{T:main}(i) effective.

\begin{prop} \label{P:main effective}
Let $\scrS$ be the set of places $v\in \Sigma_K$ such that $A_v$ is \emph{not} isogenous to $B^m$ for some abelian variety $B$ over $\FF_v$.  Then $|\scrS(x)| \ll \frac{x}{(\log x)^{1+1/d}}  \cdot ((\log\log x)^{2} \log\log\log x)^{1/d}$.  If the Generalized Riemann Hypothesis (GRH) is true, then $|\scrS(x)| \ll x^{1-\frac{1}{2d}} (\log x)^{-1+2/d}$.
\end{prop}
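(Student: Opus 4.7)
The plan is to combine Lemma~\ref{L:reduce to poly}(i) with effective Chebotarev applied to the mod-$\ell$ Galois representations $\bbar\rho_{A,\ell}$. By Lemma~\ref{L:reduce to poly}(i), $A_v$ is isogenous to some $B^m$ whenever $v \in \calS_A$, so $\scrS \subseteq \Sigma_K \setminus \calS_A$. Places of bad reduction contribute $O(1)$ and places with $N(v)$ non-prime contribute $O(\sqrt{x}/\log x)$, so the task reduces to bounding the number of $v \in \Sigma_K(x)$ of good reduction with $N(v)$ prime for which $\Phi_{A_v}$ is not free abelian of rank $r$.

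By the argument in the proof of Proposition~\ref{P:Phi rank}, such a bad $v$ is precisely one where $\rho_{A,\ell}(\Frob_v)$ is not regular semisimple in $\G_{A,\ell}$. For any prime $\ell$ large enough that $\calG_{A,\ell}$ is reductive over $\ZZ_\ell$ (Proposition~\ref{P:image of Galois mod ell}(i)), the non-regular-semisimple locus is a proper closed subscheme, so the conjugation-stable subset $U_\ell \subseteq \calG_{A,\ell}(\FF_\ell)$ of non-regular elements satisfies $|U_\ell|/|\calG_{A,\ell}(\FF_\ell)| \ll 1/\ell$; Proposition~\ref{P:image of Galois mod ell}(ii) preserves this fraction in $\bbar\rho_{A,\ell}(\Gal_K)$ up to an absolute constant. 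For any fixed $\ell$, the main-term places with $v \nmid \ell$ inject into $\{v \in \Sigma_K(x) : \bbar\rho_{A,\ell}(\Frob_v) \subseteq U_\ell\}$.

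The extension $K(A[\ell])/K$ has degree $\ll \ell^d$ with $\log|\operatorname{disc}(K(A[\ell])/\QQ)| \ll \ell^d \log \ell$. Applying unconditional Lagarias-Odlyzko effective Chebotarev to this extension with target conjugation-stable class corresponding to $U_\ell$ yields
\[
|\{v \in \Sigma_K(x) : \bbar\rho_{A,\ell}(\Frob_v) \subseteq U_\ell\}| \ll \frac{|\Sigma_K(x)|}{\ell} + x \exp\bigl(-c (\log x)^{1/2} \ell^{-d/2}\bigr),
\]
where the double-logarithmic factor $\bigl((\log\log x)^{2}\log\log\log x\bigr)^{1/d}$ in the final answer accounts for the standard treatment of a possible Siegel-Landau exceptional zero (via the Stark / Heath-Brown / Kowalski refinements of Lagarias-Odlyzko). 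Choosing $\ell$ of order $\bigl(\log x / ((\log\log x)^{2}\log\log\log x)\bigr)^{1/d}$ balances the two contributions and gives the unconditional bound. Under GRH, the Chebotarev error becomes $O\bigl(\ell^{d} x^{1/2}\log(\ell^{d} x)\bigr)$, and the optimal choice $\ell \asymp x^{1/(2d)}(\log x)^{-2/d}$ yields the stated $x^{1-1/(2d)}(\log x)^{-1+2/d}$ estimate.

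The principal obstacle is the uniform control (in $\ell$) of both the discriminant of $K(A[\ell])$ and the exceptional-zero contribution in Lagarias-Odlyzko. The discriminant bound is a standard computation combining tame ramification at primes of good reduction not dividing $\ell$ with local-conductor estimates from Grothendieck's theory of vanishing cycles at primes of residue characteristic $\ell$, while the refined Siegel-zero treatment follows a pattern already used in Serre's and Achter's effective applications of Chebotarev to abelian varieties (cf.~\cite{Achter-effective}).
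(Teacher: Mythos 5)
Your proposal diverges from the paper's argument in a way that opens a genuine gap. The paper fixes a \emph{single} prime $\ell$ and invokes Serre's $\ell$-adic effective Chebotarev (Th\'eor\`eme~10 of \cite{MR644559}, quoted as Lemma~\ref{L:Serre ladic}), which works directly with the $\ell$-adic Lie group $\rho_{A,\ell}(\Gal_K)$ and a codimension-$\geq 1$ analytic subset: the optimisation is internal to that theorem, over the levels $\ell^n$. You instead propose to run mod-$\ell$ Chebotarev on $K(A[\ell])/K$ and let $\ell\to\infty$. Two things go wrong with that substitution.

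First, your characterisation of the bad locus is incorrect. A place $v$ is excluded from $\calS_A$ when $\Phi_{A_v}$ is not free abelian of rank $r$, which (cf.\ the proof of Proposition~\ref{P:Phi rank}) is the condition that the Zariski closure $\T_v$ of the group generated by $\rho_{A,\ell}(\Frob_v)$ fails to be a maximal torus. This implies ``not regular semisimple'' only one way: a regular semisimple element whose powers are not Zariski dense in its centraliser torus (e.g.\ eigenvalues satisfying a nontrivial multiplicative relation, or a torsion ratio) is still bad, yet lies in $\G_{A,\ell}(\FF_\ell)_{sr}$ after reduction. So the claimed injection of bad $v$ into $\{v : \bbar\rho_{A,\ell}(\Frob_v)\subseteq U_\ell\}$ with $U_\ell$ the non-regular locus fails. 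The correct object is the Pink--Larsen subvariety $V\subsetneq\G_{A,\ell}$ used in Lemma~\ref{L:calSA effective}, which is strictly larger than the discriminant locus. Second, even with the correct $V$ your argument needs, for the varying primes $\ell$, that the closure $\mathcal V$ of $V$ in $\calG_{A,\ell}$ has special fibre $\mathcal V_{\FF_\ell}$ a \emph{proper} subvariety of $\calG_{A,\ell,\FF_\ell}$, with a codimension bound uniform in $\ell$. Since $V$ lives in $\G_{A,\ell}$, which itself depends on $\ell$, and the Mumford--Tate conjecture is not assumed in Proposition~\ref{P:main effective}, there is no $\ell$-independent model from which such uniformity would follow. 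This is precisely the difficulty that the paper's single-$\ell$, Serre-style argument avoids: the descent to finite level there occurs along $\ell^n$ for fixed $\ell$, where the reduction stays proper by flatness, rather than along a family of primes.
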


We can also state an effective version of Theorem~\ref{T:main}(ii) and Theorem~\ref{T:Weil under MT}.

\begin{thm} \label{T:main effective}
Suppose that the representations $\{\rho_{A,\ell}\}_\ell$ are \emph{independent}, i.e., $\big(\prod_{\ell}\rho_{A,\ell}\big)(\Gal_K)=\prod_\ell \rho_{A,\ell}(\Gal_K)$ (by Proposition~\ref{P:independence} this can be achieved by replacing $K$ with a finite extension).   Assume that the Mumford-Tate conjecture for $A$ holds.

Let $\scrS_1$ be the set of places $v\in \Sigma_K$ for which $A_v$ is \emph{not} isogenous to $B^m$ for some absolutely simple abelian variety $B/\FF_v$.  Fix a finite extension $L$ of $k_{\G_A}$ and let $\scrS_2$ be the set of places $v\in \Sigma_K$ for which $\Gal(L(\calW_{A_v})/L)$ is \emph{not} isomorphic to $W(\G_A)$.  

Then 
\[
|\scrS_i(x)|\ll \frac{x(\log\log x)^{1+1/(3d)}}{(\log x)^{1+1/(6d)}}.
\]
If the Generalized Riemann Hypothesis (GRH) is true, then
\[
|\scrS_i(x)|\ll x^{1-\frac{1}{4d+2r+2}} (\log x)^{\frac{2}{2d+r+1}}.
\]
\end{thm}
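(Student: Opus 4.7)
My plan is to deduce these effective bounds from the qualitative arguments of Section~\ref{S:proof main} by replacing the soft use of Lemma~\ref{L:sieving input} with a quantitative large-sieve argument.  Inspection of the proofs of Theorem~\ref{T:main}(ii) and Theorem~\ref{T:Weil under MT} shows that, up to a set of negligible size (the complement of $\calS_A$ in $\Sigma_K(x)$ is easily controlled by effective Chebotarev applied to Proposition~\ref{P:Phi rank}), both $\scrS_1$ and $\scrS_2$ are contained in the union
$$
\bigcup_C \{v \in \calS_A : \psi_v(\Gal_L) \cap C = \emptyset\},
$$
where $C$ ranges over a finite collection of non-empty conjugation-stable subsets of $W(\G_A,\T)$ (furnished by Lemma~\ref{L:new Jordan}).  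It therefore suffices to bound each of these sets separately.

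For a fixed such $C$, combining Lemma~\ref{L:Weyl class of psi} with Lemma~\ref{L:sieving input} supplies, for every admissible prime $\ell$ (one splitting completely in $L$, with $\T_{\QQ_\ell}$ split and $\calG_{A,\ell}$ reductive), a conjugation-stable subset $U_\ell \subseteq \bbar\rho_{A,\ell}(\Gal_K)$ of relative density $|C|/|W(\G_A,\T)| + O(1/\ell)$ such that $\bbar\rho_{A,\ell}(\Frob_v) \subseteq U_\ell$ forces $\psi_v(\Frob_\ell) \in C$ whenever $v \nmid \ell$.  Hence the exceptional set is contained in the sifted set cut out by demanding $\bbar\rho_{A,\ell}(\Frob_v) \cap U_\ell = \emptyset$ for all admissible $\ell \leq Q$.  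The independence hypothesis on $\{\rho_{A,\ell}\}_\ell$, combined with the Kowalski-type large sieve for Frobenius, then produces an inequality of the shape
$$
|\scrS_i(x)| \;\ll\; \frac{\pi_K(x) + \Delta(x,Q)}{H(Q)},
$$
where $H(Q) \gg \pi(Q)$ is the sifting sum --- the lower bound following from the uniform positive local densities guaranteed by Lemma~\ref{L:sieving input} --- and $\Delta(x,Q)$ is a discrepancy term fed by an effective Chebotarev density theorem applied to the mod-$\ell$ Galois extensions for $\ell \leq Q$.

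The remainder is parameter optimisation.  Under GRH, effective Chebotarev contributes at each admissible $\ell$ an error of size roughly $\ell^{d+r}\sqrt{x}\log(\ell x)$, using that $[\bbar\rho_{A,\ell}(\Gal_K):1] \ll \ell^d$ and that the (root) discriminant of the mod-$\ell$ field grows polynomially in $\ell$ with exponent controlled by $d$ and $r$; summing over $\ell \leq Q$ gives $\Delta(x,Q) \ll Q^{2(d+r+1)}\sqrt{x}\log x$, and balancing against $\pi_K(x)/\pi(Q)$ forces $Q \asymp x^{1/(4d+2r+2)}$, yielding the stated exponent $1 - 1/(4d+2r+2)$ together with the logarithmic factor $(\log x)^{2/(2d+r+1)}$.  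Without GRH, one substitutes the Lagarias--Odlyzko effective Chebotarev, which only permits $Q$ to be a small power of $\log x$; the resulting $(\log\log x)^{1+1/(3d)}$ factor then emerges from combining the logarithmic overheads of $\Delta(x,Q)$ with the $O(1/\ell)$ correction to the local densities and with the truncation of $H(Q)$.

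The main obstacle is making the dependence on $\ell$ of the conductor and ramification data of $K(A[\ell])/K$ explicit enough to produce the clean polynomial shape of $\Delta(x,Q)$ used above, with exponents that match the $4d+2r+2$ and $2d+r+1$ of the statement.  This requires careful analysis of the wild part at primes of bad reduction, together with a uniform treatment of the (finitely many) primes excluded from the admissible set --- in particular those for which $\calG_{A,\ell}$ fails to be reductive or for which the index in Proposition~\ref{P:image of Galois mod ell}(ii) is not yet minimal.  All other ingredients, notably Proposition~\ref{P:image of Galois mod ell}, Lemma~\ref{L:JKZ estimate}, and Lemma~\ref{L:sieving input}, are already available.
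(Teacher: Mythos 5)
Your overall architecture matches the paper's: reduce $\scrS_1,\scrS_2$ (up to the effectively small complement of $\calS_A$) to the sets $\{v\in\calS_A : \psi_v(\Gal_L)\cap C=\emptyset\}$ via Lemma~\ref{L:new Jordan}, then sieve using the local data of Lemma~\ref{L:sieving input} at the admissible primes. That is exactly the structure of Lemma~\ref{L:calSA effective} and Proposition~\ref{P:sieving effective}. Where you diverge is in how the sieve inequality itself is obtained, and this is where the proposal has a real gap.

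The paper does \emph{not} attempt to re-derive the sieve from effective Chebotarev with conductor and ramification estimates for $K(A[\ell])/K$. It instead invokes Theorem~3.3 of \cite{large-sieve} as a black box (this is Proposition~\ref{P:large sieve}). That theorem already packages the Chebotarev input, so the only inputs the paper has to supply are the group-theoretic bounds $|H_D|\le Q^d$ and $|H_D^\sharp|\le Q^r$, which come from Lemma~\ref{L:group bounds} (the conjugacy-class bound $|\G(\FF_q)^\sharp|\le\kappa q^r$ is a nontrivial ingredient, ultimately resting on \cite{MR1489911}) and Proposition~\ref{P:image of Galois mod ell}(ii). Your ``main obstacle'' paragraph is precisely the work that citing Theorem~3.3 makes unnecessary, and you have not actually carried it out.

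Moreover your exponent bookkeeping is internally inconsistent and does not reproduce the stated bounds. You claim a per-prime GRH error of size $\ell^{d+r}\sqrt{x}\log(\ell x)$; summing that over $\ell\le Q$ gives $Q^{d+r+1}\sqrt{x}\log x$, not $Q^{2(d+r+1)}\sqrt{x}\log x$. Neither of those matches the paper's effective error. In the paper, Theorem~3.3(ii) produces an error of the shape $\max_{D'}|H_{D'}|\cdot\sum_D|H_D^\sharp||H_D|\cdot x^{1/2}\log x \ll Q^d\cdot Q\cdot Q^{d+r}\cdot x^{1/2}\log x = Q^{2d+r+1}x^{1/2}\log x$; after dividing by $L(Q)\gg Q/\log Q$ and balancing against $\frac{x}{\log x}\cdot\frac{\log Q}{Q}$, one is forced to take $Q\asymp(x^{1/2}/(\log x)^2)^{1/(2d+r+1)}$, which yields the exponents $1-\frac{1}{4d+2r+2}$ and $\frac{2}{2d+r+1}$. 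Taking your $\Delta(x,Q)\ll Q^{2(d+r+1)}\sqrt{x}\log x$ at face value would instead force $Q\asymp x^{1/(4d+4r+4)}$ and a strictly weaker final exponent. The missing factor of $Q^d$ (from $\max_{D'}|H_{D'}|$) is structural to the large sieve inequality, not something recoverable from a naive sum of per-prime Chebotarev errors, which is a further indication that the sieve inequality should be taken as a black box rather than re-derived here.
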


These bounds will be an application of the large sieve as developed in \cite{large-sieve}. Cases where $\G_A\cong \GSp_{2\dim(A),\QQ}$ was handled in \cite{large-sieve}*{\S1.4} and many other cases were proved by Achter, see \cite{Achter-effective}*{Theorem~B}.  For comparison, note that  $|\Sigma_K(x)|\sim x/\log x$ as $x\to +\infty$.

\subsection{$\ell$-adic subvarieties}

\begin{lemma} \label{L:Serre ladic}
Fix a prime $\ell$ and a proper subvariety $V$ of $\G_{A,\ell}$ which is stable under conjugation.   Let $\scrS$ be the set of place $v\in \Sigma_K$ for which $A$ has good reduction, $v\nmid\ell$, and $\rho_{A,\ell}(\Frob_v)\subseteq V(\QQ_\ell)$.  Then 
\[
|\scrS(x)| \ll \frac{x}{(\log x)^{1+1/d}}  \cdot ((\log\log x)^{2} \log\log\log x)^{1/d}.
\]
If GRH holds, then $|\scrS(x)| \ll x^{1-\frac{1}{2d}} (\log x)^{-1+2/d}$.
\end{lemma}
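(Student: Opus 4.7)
The plan is to apply the effective Chebotarev density theorem to the mod-$\ell^n$ reductions of $\rho_{A,\ell}$, with $n = n(x)$ optimized against $x$.

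For each integer $n \geq 1$, let $\bbar\rho_{A,\ell^n}$ be the composite of $\rho_{A,\ell}$ with the reduction $\calG_{A,\ell}(\ZZ_\ell) \twoheadrightarrow \calG_{A,\ell}(\ZZ/\ell^n\ZZ)$, and let $K_n/K$ be the corresponding finite Galois extension with group $G_n := \bbar\rho_{A,\ell^n}(\Gal_K)$. Let $V_n \subseteq \calG_{A,\ell}(\ZZ/\ell^n\ZZ)$ be the image of $V(\QQ_\ell) \cap \calG_{A,\ell}(\ZZ_\ell)$ under the reduction map. For any $v\in \Sigma_K$ of good reduction with $v\nmid \ell$, the Frobenius $\rho_{A,\ell}(\Frob_v)$ is integral, so if it lies in $V(\QQ_\ell)$ then its image in $G_n$ lies in the conjugation-stable subset $C_n := V_n \cap G_n$. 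Thus $\scrS$ differs from the set of $v$ with $\bbar\rho_{A,\ell^n}(\Frob_v) \in C_n$ by finitely many places, and it suffices to bound the latter.

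The key geometric input is the estimate
\[
\frac{|V_n|}{|\calG_{A,\ell}(\ZZ/\ell^n\ZZ)|} \ll \ell^{-n},
\]
with implicit constant independent of $n$. This follows from the fact that, by Proposition~\ref{P:image of Galois mod ell}(i), $\calG_{A,\ell}$ is a smooth reductive $\ZZ_\ell$-scheme of relative dimension $d$ for all but finitely many $\ell$, while the Zariski closure of $V$ in $\calG_{A,\ell}$ is a proper closed subscheme of relative dimension at most $d-1$. A point-count on the special fibre together with Hensel's lemma for the smooth locus yields the bound. Hence $|C_n|/|G_n| \ll \ell^{-n}$. Complementarily, $K_n/K$ is unramified outside $\ell$ and the bad primes for $A$, with ramification at $\ell$ controlled by the level; a standard conductor--discriminant estimate gives $\log |d_{K_n}| \ll |G_n| \cdot n \log\ell \ll \ell^{nd}\cdot n$.

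Now apply effective Chebotarev to $K_n/K$ and the subset $C_n$. The Lagarias--Odlyzko bound (unconditional) and its GRH refinement yield, after absorbing the geometric estimate $|C_n|/|G_n| \ll \ell^{-n}$,
\[
|\scrS(x)| \;\ll\; \frac{x}{\ell^n \log x} \;+\; \text{error}(x,\ell^n,d),
\]
where the unconditional error is of the shape $x \exp\bigl(-c\sqrt{\log x}/\sqrt{d\,\log\ell^n}\bigr)$ and, under GRH, $\ell^{nd/2} x^{1/2} \log(\ell^{nd} x)$. Balancing by choosing $\ell^n$ of order $\log x$ (times a correction involving iterated logs) in the unconditional case, and of order $x^{1/(2d)}/\log x$ under GRH, yields the stated bounds exactly as in Serre's \emph{Quelques applications du th\'eor\`eme de densit\'e de Chebotarev}.

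The main obstacle is ensuring the geometric bound $|V_n|/|\calG_{A,\ell}(\ZZ/\ell^n\ZZ)| \ll \ell^{-n}$ is proved uniformly in $n$ with explicit dependence only on $V$, $A$ and $\ell$: once this codimension estimate is in hand (using smoothness of $\calG_{A,\ell}$ to lift $\FF_\ell$-points and control fibres of reduction), the rest of the argument is the standard optimisation of effective Chebotarev bounds and requires no new ingredients.
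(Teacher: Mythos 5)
Your approach is essentially the same as the paper's: both reduce the lemma to Serre's effective Chebotarev bound for thin sets in $\ell$-adic Lie groups. The difference is that the paper simply observes that $V(\QQ_\ell)\cap\rho_{A,\ell}(\Gal_K)$ is a closed $\ell$-adic analytic subset of dimension at most $d'-1$ (where $d'=\dim\G_{A,\ell}\leq d$) inside the $d'$-dimensional compact Lie group $\rho_{A,\ell}(\Gal_K)$ (open by Bogomolov), and then cites Serre's Th\'eor\`eme~10(i),(ii) of \emph{Quelques applications du th\'eor\`eme de densit\'e de Chebotarev} verbatim, whereas you re-derive that theorem from the effective Chebotarev density theorem with the optimization over $\ell^n$.

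Your sketch is correct in outline, but the justification of the key geometric bound $|V_n|/|\calG_{A,\ell}(\ZZ/\ell^n\ZZ)|\ll\ell^{-n}$ uniformly in $n$ is not quite right as stated. You invoke smoothness of $\calG_{A,\ell}$ and Hensel's lemma, but the issue is the Zariski closure $\bar V$ of $V$ in $\calG_{A,\ell}$, which is flat over $\ZZ_\ell$ with fibers of dimension $\leq d-1$ but is typically singular; Hensel's lemma on the smooth locus of $\bar V$ controls lifts over smooth $\FF_\ell$-points, but the fibers of reduction over singular $\FF_\ell$-points of $\bar V$ can be much larger, and naive scheme-theoretic point counts of $\bar V(\ZZ/\ell^n\ZZ)$ already fail to be $\ll\ell^{n(d-1)}$ for something as simple as $xy=0$. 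What saves you is that your $V_n$ is defined as the \emph{image} of $V(\QQ_\ell)\cap\calG_{A,\ell}(\ZZ_\ell)$ under reduction (not all $\ZZ/\ell^n\ZZ$-points of $\bar V$), and the correct statement that this image has size $\ll\ell^{n(d-1)}$ is a theorem of Oesterl\'e on reductions of closed $\ell$-adic analytic subsets, which is precisely the ingredient Serre's Th\'eor\`eme~10 is built on. Also note that $\calG_{A,\ell}$ is only known to be reductive for $\ell$ large (Proposition on image mod $\ell$), whereas this lemma is for a fixed arbitrary $\ell$; the $\ell$-adic analytic formulation sidesteps this. The cleaner move, which the paper takes, is to quote Serre's Th\'eor\`eme~10 directly rather than re-derive it.
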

\begin{proof}
Let $d'$ be the dimension of $\G_{A,\ell}$.  The variety $V$ has dimension at most $d'-1$ since it is a proper subvariety of the connected group $\G_{A,\ell}$.  By Proposition~\ref{P:Bogomolov}, $\rho_{A,\ell}(\Gal_K)$ has dimension $d'$ as an $\ell$-adic Lie group.   As an $\ell$-adic analytic variety, $V(\QQ_\ell)\cap \rho_{A,\ell}(\Gal_K)$ has dimension at most $d'-1$.  By \cite{MR644559}*{Th\'eor\`eme~10(i)}, we have
\[
|\scrS(x)|\ll \frac{x}{\log x}\Big( \frac{(\log\log x)^2 \log\log\log x}{\log x}\Big)^{1/d'} = \frac{x}{(\log x)^{1+1/d'}}  \cdot ((\log\log x)^{2} \log\log\log x)^{1/d'}
\]
Assuming GRH, \cite{MR644559}*{Th\'eor\`eme~10(ii)} implies that
\[
|\scrS(x)| \ll \frac{x}{\log x} \Big(\frac{(\log x)^2}{x^{1/2}}\Big)^{1/d'} = x^{1-\frac{1}{2d'}} (\log x)^{-1+2/d'}.
\]
We have $d'\leq d$ by Proposition~\ref{P:MT inclusion}; the lemma then quickly follows.
\end{proof}

We can now consider our set $\calS_A$ from \S\ref{SS:calSA}.

\begin{lemma} \label{L:calSA effective}
We have $|\Sigma_K(x)-\calS_A(x)|\ll \frac{x}{(\log x)^{1+1/d}}  \cdot ((\log\log x)^{2}\log\log\log x)^{1/d}$.  If GRH holds, then $|\Sigma_K(x)-\calS_A(x)| \ll x^{1-\frac{1}{2d}} (\log x)^{-1+2/d}$.
\end{lemma}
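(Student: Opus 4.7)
The strategy is to reduce the claim to Lemma~\ref{L:Serre ladic} by producing a single proper closed subvariety of $\G_{A,\ell}$ which traps all but a negligible portion of the ``bad'' Frobenius classes. First I would write $\Sigma_K(x)\setminus\calS_A(x)$ as a union of three pieces, one for each defining condition of $\calS_A$:  $\mathcal{B}_1$ of bad reduction (finite, contributing $O(1)$);  $\mathcal{B}_2(x)$ of good reduction but with $\FF_v$ of non-prime cardinality, bounded by $[K:\QQ]\sum_{f\ge 2}\pi(x^{1/f})\ll x^{1/2}/\log x$ since at most $[K:\QQ]$ places of $K$ lie above each prime power $p^f$;  and $\mathcal{B}_3(x)$ of good reduction with prime residue field but with $\Phi_{A_v}$ not free abelian of rank $r$. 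The first two contributions are absorbed into the claimed error terms, so the crux is bounding $|\mathcal{B}_3(x)|$.

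Fixing any prime $\ell$, I would recall from the proof of Proposition~\ref{P:Phi rank}(ii) that, for a place $v$ of good reduction with $v\nmid\ell$, the condition ``$\Phi_{A_v}$ is free abelian of rank $r$'' is equivalent to the Zariski closure $\T_v$ of $\langle\rho_{A,\ell}(\Frob_v)\rangle$ being a maximal torus of $\G_{A,\ell}$. The plan is then to exhibit a proper closed subvariety $V\subseteq\G_{A,\ell}$, stable under conjugation, such that $\rho_{A,\ell}(\Frob_v)\in V(\QQ_\ell)$ for every $v\in\mathcal{B}_3(x)$ with $v\nmid\ell$; Lemma~\ref{L:Serre ladic} applied to this $V$ then yields exactly the two bounds of the present lemma, up to the negligible contribution from $\mathcal{B}_2$.

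To produce $V$, I would combine the non-regular-semisimple locus of $\G_{A,\ell}$---cut out by the vanishing of the discriminant of the characteristic polynomial, hence already a proper closed conjugation-invariant subvariety---with finitely many further conjugation-closed subvarieties of the form $\bigcup_{g\in\G_{A,\ell}}g\cdot\ker(\chi)\cdot g^{-1}$, where $\chi$ ranges over a finite set of non-trivial characters of a fixed maximal torus $\T\subseteq\G_{A,\ell}$. The essential input ensuring that a finite set of $\chi$ suffices is the structural theorem \cite{MR1441234}*{Theorem~1.2} already invoked in Proposition~\ref{P:Phi rank}, combined with the observation that Frobenius eigenvalues are Weil $N(v)$-integers of weight one lying in CM subfields of $\QQbar$ of degree at most $2\dim A$, within which roots of unity have order bounded in terms of $\dim A$ alone.

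The main obstacle is precisely this last reduction to a \emph{single} proper closed subvariety: the set of $g\in\G_{A,\ell}$ whose cyclic group generates a proper subtorus of the maximal torus containing it is, \emph{a priori}, a countable union of closed subsets rather than a Zariski-closed subvariety. The resolution leverages both the geometric content of \cite{MR1441234} and the height-theoretic finiteness just mentioned; once $V$ is in place, Lemma~\ref{L:Serre ladic} supplies the entire quantitative content of the two bounds.
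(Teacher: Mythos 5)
Your proposal follows essentially the same route as the paper: the identical three-way decomposition of $\Sigma_K(x)-\calS_A(x)$ (finitely many places of bad reduction, $O(\sqrt{x})$ places of non-prime residue cardinality, and places where $\Phi_{A_v}$ fails to be free of the correct rank), with the identical reduction of the third piece to Lemma~\ref{L:Serre ladic} via a proper conjugation-invariant subvariety $V\subseteq\G_{A,\ell}$ trapping the bad Frobenius classes. The paper simply cites \cite{MR1441234}*{\S2} (not Theorem~1.2, which is what Proposition~\ref{P:Phi rank} invokes) for the existence of $V$; the construction there is precisely the combination you sketch --- the non-regular-semisimple locus together with the ``neatness'' locus controlled via bounded-order roots of unity --- so your outline, though it leaves the key ``single proper subvariety'' step as a gesture toward Larsen--Pink, is aligned with the paper's argument.
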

\begin{proof}
There are only finitely many places $v$ for which $A$ has bad reduction.   If $v\in \Sigma_K(x)$ satisfies $N(v)=p^e$ with $e>1$, then $p\leq \sqrt{x}$.  Using that at most $[K:\QQ]$ places of $K$ lie over a given prime $p$, we find that $|\{v\in \Sigma_K(x): N(v) \text{ not prime}\}| \leq [K:\QQ] \sqrt{x}$.  

Fix a prime $\ell$.  It thus suffices to consider the set $\scrS$ of places $v\in \Sigma_K$ for which $A$ has good reduction and $v\nmid \ell$ such that $\Phi_{A_v}$ is \emph{not} a free abelian group with rank equal to the common rank of the groups $\G_{A,\ell}$.  In \cite{MR1441234}*{\S2}, Pink and Larsen show that there is a proper subvariety $V$ of $\G_{A,\ell}$  stable under conjugation such that if $\rho_{A,\ell}(\Frob_v) \notin V(\QQ_\ell)$, then $v\in \scrS$.  [Let $\T_v$ be the algebraic subgroup of $\G_{A,\ell}$ generated by a representation of $\rho_{A,\ell}(\Frob_v)$.   Then $\Phi_{A_v}$ is a free abelian group with rank equal to the rank of $\G_{A,\ell}$ if and only if $\T_v$ is a maximal torus of $\G_{A,\ell}$ and $\rho_{A,\ell}(\Frob_v)$ is ``neat''.]

The required bounds for $|\scrS(x)|$ then follow from Lemma~\ref{L:Serre ladic}.
\end{proof}

\begin{proof}[Proof of Proposition~\ref{P:main effective}]
The proposition follows from Lemma~\ref{L:reduce to poly} and Lemma~\ref{L:calSA effective}.
\end{proof}

\subsection{Proof of Theorem~\ref{T:main effective}}
For a finite group $G$, we denote its set of conjugacy classes by $G^\sharp$.

\begin{lemma} \label{L:group bounds}
Let $\G$ be a split and connected reductive group defined over a finite field $\FF_q$.   Let $d$ and $r$ be the dimension and rank of $\G$, respectively. 
\begin{romanenum}
\item We have $|\G(\FF_q)|\leq q^d$.
\item There is a constant $\kappa\geq 1$, depending only on $d$ and $r$, such that $|\G(\FF_q)^\sharp| \leq \kappa q^r$.
\end{romanenum}
\end{lemma}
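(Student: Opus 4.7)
The plan is to prove the two inequalities separately using standard facts about split connected reductive groups over finite fields.

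\emph{Part (i).} For a split connected reductive group $\G$ of rank $r$ over $\FF_q$, I would invoke the Steinberg/Chevalley formula
\[
|\G(\FF_q)| \;=\; q^N\prod_{i=1}^r (q^{d_i} - 1),
\]
where $N$ is the number of positive roots of $\G$ with respect to a fixed split maximal torus, and $d_1,\dots,d_r$ are the degrees of a set of homogeneous generators of the ring of Weyl-group invariants on $X(\T)\otimes\QQ$. Two classical identities hold: $\sum_i d_i = N + r$ and $d = \dim\G = 2N + r$. The trivial bound $q^{d_i}-1 \leq q^{d_i}$ for each $i$ then yields
\[
|\G(\FF_q)| \;\leq\; q^N\prod_{i=1}^r q^{d_i} \;=\; q^{N+\sum_i d_i} \;=\; q^{2N+r} \;=\; q^d,
\]
which establishes (i).

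\emph{Part (ii).} For the bound on the number of conjugacy classes, I would use the Jordan decomposition: every $g \in \G(\FF_q)$ factors uniquely as $g = su = us$ with $s$ semisimple and $u$ unipotent in $\G(\FF_q)$, and two elements are $\G(\FF_q)$-conjugate iff their semisimple parts are $\G(\FF_q)$-conjugate and (after conjugating $s$'s to agree) their unipotent parts are $C_\G(s)(\FF_q)$-conjugate. Hence
\[
|\G(\FF_q)^\sharp| \;\leq\; N_{\mathrm{ss}}(\G) \cdot \max_{s}\, N_{\mathrm{unip}}(C_\G(s)),
\]
where $s$ ranges over semisimple elements of $\G(\FF_q)$.

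For the first factor, every semisimple element of $\G(\FF_q)$ lies in some maximal $\FF_q$-torus of $\G$. By the discussion of \S\ref{SS:tori over finite fields}, the $\G(\FF_q)$-conjugacy classes of maximal $\FF_q$-tori are parametrized by $W(\G,\T)^\sharp$, giving at most $|W(\G,\T)|$ classes of such tori. Any maximal $\FF_q$-torus $\T'$ of rank $r$ satisfies $|\T'(\FF_q)| \leq (q+1)^r$ (its Frobenius acts on $X(\T')$ as an order-dividing-$|W|$ element, so its eigenvalues on $X(\T')\otimes\CC$ are roots of unity and each factor of $\det(q-\mathrm{Frob}\,|\,X(\T'))$ has absolute value $\leq q+1$). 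Therefore $N_{\mathrm{ss}}(\G) \leq |W(\G,\T)|\cdot (q+1)^r \leq \kappa_1 q^r$ for a constant $\kappa_1$ depending only on the type of $\G$. For the second factor, each $C_\G(s)$ is a connected reductive subgroup of $\G$ of rank $r$; the number of unipotent $C_\G(s)(\FF_q)$-conjugacy classes is finite and bounded purely in terms of the root datum of $C_\G(s)$. Since only finitely many root data of given rank and dimension exist, the possible types of $\G$ and of any centralizer $C_\G(s)$ are controlled by $(d,r)$, giving a constant $\kappa_2$ (independent of $q$) bounding $N_{\mathrm{unip}}(C_\G(s))$. Taking $\kappa := \kappa_1\kappa_2$ completes the proof.

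The only non-routine step is the uniform bound on the number of unipotent classes; this is the main obstacle, but for the present application a crude bound (for instance in terms of the number of nilpotent orbits in the Lie algebra, or of pairs of a parabolic subgroup and a distinguished unipotent class in its Levi) suffices, since we only need a constant depending on $(d,r)$ and the finiteness of possible root data at fixed rank and dimension turns any such bound into a function of $(d,r)$ alone.
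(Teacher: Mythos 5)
Your proof takes a genuinely different route from the paper in both parts. For (i), the paper reduces via the almost-direct-product decomposition $\G = Z^\circ\cdot\G^\der$, then passes to the simply connected cover and cites Steinberg's order formula for simple groups; your argument applies the Steinberg formula $|\G(\FF_q)|=q^N\prod(q^{d_i}-1)$ directly to the reductive group and uses the two classical identities $\sum d_i = N+r$ and $d = 2N+r$, which is cleaner and avoids the isogeny step. For (ii), the paper uses the sandwich inequalities $|H^\sharp|\leq[G:H]|G^\sharp|$ and $|G^\sharp|\leq[G:H]|H^\sharp|$ to reduce to a finite simple group of Lie type and then invokes Liebeck--Pyber, whereas you count semisimple classes directly (via the torus classification of \S\ref{SS:tori over finite fields} and the bound $|\T'(\FF_q)|\leq(q+1)^r$) and then bound unipotent classes in centralizers; this is self-contained modulo the finiteness of unipotent classes but longer than a citation to Liebeck--Pyber. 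There is one genuine inaccuracy to fix in (ii): the centralizer $C_\G(s)$ of a semisimple element is \emph{not} connected in general (it is when $\G^\der$ is simply connected, by Steinberg, but the lemma allows arbitrary split reductive $\G$, e.g.\ $\PGL_2$). You should instead work with $C_\G(s)^\circ$, which is connected reductive of rank $r$ and dimension $\leq d$, observe that the unipotent elements of $C_\G(s)(\FF_q)$ lie in $C_\G(s)^\circ(\FF_q)$ when $p$ does not divide the order of the component group (true for $q$ large, with small $q$ absorbed into $\kappa$), and bound the number of $C_\G(s)(\FF_q)$-classes of unipotents by the number of $C_\G(s)^\circ(\FF_q)$-classes. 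With that correction, and with the caveat you already flag about making the unipotent bound uniform, your proof is sound.
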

\begin{proof}
The reductive group $\G$ is the almost direct product of a split torus and a split semisimple group.   We can then reduce (i) and (ii) to the case where $\G$ is a split torus or $\G$ is a connected and split semisimple group.  If $\G$ is a split torus, then $r=d$ and we have $|\G(\FF_q)|=|\G(\FF_q)^\sharp|= (q-1)^d\leq q^d$.

Now suppose that $\G$ is semisimple.  We first prove (i).  The cardinality $|\G(\FF_q)|$ does not change under isogeny, so we may assume that $\G$ is simply connected.  The group $\G$ is then a product of simple, simply connected, and split semisimple groups (the number of factors being bounded in terms of $d$); so we may assume further that $\G$ is simple.   There are positive integers $a_i$ such that $|\G(\FF_q)|=q^d\prod_{i=1}^r\big(1-\frac{1}{q^{a_i}}\big)$; this can be deduced from \cite{MR0466335}*{Theorem~25(a)}.  Therefore, $|\G(\FF_q)|\leq q^d$.

We now prove (ii).   If $H$ is a proper subgroup of a finite group $G$, then we have inequalities $|H^\sharp| \leq [G:H] |G^\sharp|$  and $|G^\sharp| \leq [G:H] |H^\sharp|$, cf.~\cite{MR0125162}.   Using these inequalities, we can reduce part (ii) to showing that if $G$ is a finite simple group of Lie type over $\FF_q$ which arises from a simple algebraic group of rank $r$.  Then $|G^\sharp| \leq \kappa q^r$ for some constant $\kappa\geq 1$ depending only on $r$; this follows from \cite{MR1489911}*{Theorem~1}.
\end{proof}

\begin{prop} \label{P:large sieve}
Fix a set $\Lambda$ of rational primes with positive density such that $\calG_{A,\ell}$ is a split reductive group scheme over $\ZZ_\ell$ for all $\ell\in \Lambda$.  For each prime $\ell\in \Lambda$, fix a subset $U_\ell$ of $\bbar\rho_{A,\ell}(\Gal_K)$ that is stable under conjugation and satisfies $|U_\ell|/|\bbar\rho_{A,\ell}(\Gal_K)| = \delta + O(1/\ell)$ for some $0<\delta<1$, where $\delta$  and the implicit constant do not depend on $\ell$.  Let $\calV$ be the set of place $v\in \Sigma_K$ for which $A$ has good reduction and for which $\bbar\rho_{A,\ell}(\Frob_v)\not\subseteq U_\ell$ for all $\ell\in \Lambda$ that satisfy $v\nmid \ell$.   
\begin{alphenum}
\item Then
\[
|\calV(x)| \ll \frac{x(\log\log x)^{1+1/(3d)}}{(\log x)^{1+1/(6d)}}.
\]
\item If GRH holds, then
\[
|\calV(x)| \ll x^{1-\frac{1}{4d+2r+2}} (\log x)^{\frac{2}{2d+r+1}}.
\]
\end{alphenum}
\end{prop}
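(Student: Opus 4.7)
The plan is to apply the large-sieve inequality for Frobenius developed in \cite{large-sieve}, using the sieve conditions $\bbar\rho_{A,\ell}(\Frob_v) \not\subseteq U_\ell$ for primes $\ell \in \Lambda$ with $\ell \leq L$, where $L$ is a parameter to be optimized at the end. The formalism yields an inequality of the shape
\[
|\calV(x)| \ll \frac{|\Sigma_K(x)| + \Delta(x,L)}{H(L)},
\]
where $H(L) := \sum_{\ell \in \Lambda,\,\ell \leq L} |U_\ell|/(|\bbar\rho_{A,\ell}(\Gal_K)| - |U_\ell|)$ is the sieve sum and $\Delta(x,L)$ is an error term controlled by effective Chebotarev estimates for the residual representations $\bbar\rho_{A,\ell}$. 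The contribution of finitely many $\ell$ dividing some $v$ with $N(v) \leq x$ is negligible and can be absorbed into the implicit constants.

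For the denominator, the hypothesis $|U_\ell|/|\bbar\rho_{A,\ell}(\Gal_K)| = \delta + O(1/\ell)$ combined with the positive density of $\Lambda$ gives, after discarding finitely many $\ell$, that each summand of $H(L)$ is bounded below by a positive constant, whence $H(L) \gg L/\log L$. For the numerator the key inputs are group-theoretic: the hypothesis that $\calG_{A,\ell}$ is a split reductive $\ZZ_\ell$-scheme, combined with Lemma~\ref{L:group bounds}, yields $|\bbar\rho_{A,\ell}(\Gal_K)| \leq |\calG_{A,\ell}(\FF_\ell)| \leq \ell^d$ and $|\bbar\rho_{A,\ell}(\Gal_K)^\sharp| \leq \kappa \ell^r$. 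These bounds enter the large-sieve error term through the number of characters that must be controlled; together with the standard fact that $\ker(\bbar\rho_{A,\ell})$ cuts out an extension of $K$ ramified only above $\ell$ and the finitely many primes of bad reduction of $A$, they bound the analytic conductor of each $\bbar\rho_{A,\ell}$ by a fixed power of $\ell$ depending only on $A$ and $K$. Under GRH, inserting these into the effective Chebotarev theorem of Lagarias--Odlyzko yields $\Delta(x,L) \ll L^{2(d+r+1)} x^{1/2} (\log xL)^{O(1)}$; unconditionally, one is restricted to $L$ in a polylogarithmic range of $x$.

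Finally, one optimizes $L$. Under GRH the choice $L \asymp x^{1/(2(2d+r+1))}$ balances the error against $x$ in the numerator, which after dividing by $H(L) \asymp L/\log L$ produces the stated bound $|\calV(x)| \ll x^{1-1/(4d+2r+2)} (\log x)^{2/(2d+r+1)}$. Unconditionally, the admissible range forces $L$ of size a power of $\log x$, giving the weaker bound with the double-logarithmic factor. I expect the main obstacle to be the precise bookkeeping of exponents and logarithmic factors through the optimization: the target exponents $1/(4d+2r+2)$ and $1/(6d)$ are sensitive to the exact shape of the conductor bound and of the Chebotarev error term, so the group-theoretic estimates of Lemma~\ref{L:group bounds} must be applied at the right places (any slack in the powers of $\ell$ bounding $|\bbar\rho_{A,\ell}(\Gal_K)|$ or $|\bbar\rho_{A,\ell}(\Gal_K)^\sharp|$ propagates directly into a weaker final exponent). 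Once the large-sieve framework of \cite{large-sieve} is invoked with these ingredients, the two inequalities of the proposition follow.
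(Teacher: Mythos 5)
Your proposal follows essentially the same route as the paper: invoke the Frobenius large sieve of \cite{large-sieve} with sieving conditions indexed by primes $\ell\in\Lambda$ up to a cutoff, feed in the bounds $|\bbar\rho_{A,\ell}(\Gal_K)|\leq\ell^d$ and $|\bbar\rho_{A,\ell}(\Gal_K)^\sharp|\leq\kappa\ell^r$ from Lemma~\ref{L:group bounds}, lower-bound the sieve sum by $\gg L/\log L$ using the positive density of $\Lambda$ and the hypothesis $|U_\ell|/|\bbar\rho_{A,\ell}(\Gal_K)|\geq\delta'$, and then optimize the cutoff. The paper sets this up slightly differently --- it applies Theorem~3.3 of \cite{large-sieve} directly, in which the sieve support runs over \emph{subsets} $D$ of $\Lambda_Q$ with $\prod_{\ell\in D}\kappa\ell\leq Q$ and the error term has the specific form $\max_{D'}|H_{D'}|\cdot\sum_D|H_D^\sharp|\,|H_D|\cdot x^{1/2}\log x$ --- but the dominant contribution to the sieve sum still comes from the singletons $D=\{\ell\}$, so the two formulations are morally identical.

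One concrete issue with your numerics: you assert an error bound $\Delta(x,L)\ll L^{2(d+r+1)}x^{1/2}(\log xL)^{O(1)}$, but this is inconsistent with your own choice $L\asymp x^{1/(2(2d+r+1))}$, since $L^{2(d+r+1)}x^{1/2}\asymp x^{(4d+3r+3)/(4d+2r+2)}\gg x$, so the error would in fact dominate the main term rather than balance it. The correct exponent, as in the paper, is $2d+r+1$: from $\max_{D'}|H_{D'}|\cdot\sum_D|H_D^\sharp|\,|H_D|\leq Q^d\cdot|\mathcal{Z}(Q)|\,Q^r\,Q^d\leq Q^{2d+r+1}$. With error $\ll L^{2d+r+1}x^{1/2}\log x$ and $L\asymp(x^{1/2}/(\log x)^2)^{1/(2d+r+1)}$, the two terms balance and you recover the stated bound $x^{1-1/(4d+2r+2)}(\log x)^{2/(2d+r+1)}$. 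You flagged exponent bookkeeping as the likely sticking point, and indeed that is exactly where the slip occurs; the overall strategy and all the other inputs (reductivity of $\calG_{A,\ell}$, the group-theoretic estimates, the passage from Proposition~\ref{P:image of Galois mod ell} controlling the index, the unconditional polylogarithmic range for $L$) are as in the paper.
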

\begin{proof}
For each $\ell \in \Lambda$, we set $H_\ell:= \bbar\rho_{A,\ell}(\Gal_K)$.  Take any prime $\ell\in \Lambda$, and let $\G/\FF_\ell$ be the special fiber of $\calG_{A,\ell}$.  By Lemma~\ref{L:group bounds}(i), we have $|H_\ell| \leq |\G(\FF_\ell)|\leq \ell^d$.    We have an inequality $|H_\ell^\sharp|\leq [\G(\FF_\ell): H_\ell]\cdot |\G(\FF_\ell)^\sharp|$ (see the comments following \cite{MR0125162}*{Theorem~2}).  By Proposition~\ref{P:image of Galois mod ell}(ii) and Lemma~\ref{L:group bounds}(ii), there is a constant $\kappa\geq 1$ which does not depend on $\ell$ such that $|H_\ell^\sharp|\leq \kappa \ell^r$.

We now set some notation so that we may apply the sieve of \cite{large-sieve}.   After possibly removing a finite number of primes from $\Lambda$, there will be a constant $\delta'>0$ such that $|U_\ell|/|\bbar\rho_{A,\ell}(\Gal_K)| \geq \delta'$ for all $\ell\in \Lambda$.  Let $\Lambda_Q$ be the set of $\ell\in\Lambda$ that satisfy $\ell\leq Q$ and let $\mathcal{Z}(Q)$ be the set of subsets $D$ of $\Lambda_Q$ that satisfy $\prod_{\ell\in D} \kappa \ell \leq Q$.  Define the function
\[
L(Q):=\sum_{D\in \mathcal{Z}(Q)} \prod_{\ell\in D} \frac{\delta'}{1-\delta'}.
\]
For $Q$ large enough, we have $L(Q) \geq \sum_{\ell\in \Lambda, \ell\leq Q/\kappa} \delta'/(1-\delta') \gg Q/\log Q$ where the implicit constant does not depend on $Q$; this uses that $\Lambda$ has positive density.   For each $D \in \mathcal{Z}(Q)$, we define the group $H_D:= \prod_{\ell\in D} H_\ell$.   For $D\in \mathcal{Z}(Q)$, we have
\begin{equation} \label{E:H bounds}
|H_D|\leq \prod_{\ell\in D}\ell^d \leq Q^d \quad \text{ and } |H_D^\sharp| \leq \prod_{\ell\in D} \kappa\ell^r \leq \big(\prod_{\ell\in D} \kappa\ell\big)^r\leq Q^r.
\end{equation}

We first consider the unconditional case.  Theorem~3.3(i) of \cite{large-sieve} implies that for a sufficiently small positive constant $c$, we have
\[
|\calV(x)| \ll \frac{x}{\log x}\cdot L\big(c(\log x/(\log\log x)^2)^{1/(6d)}\big)^{-1}.
\]
Using our bound $L(Q)\gg Q/\log Q$, we obtain
\[
|\calV(x)| \ll \frac{x}{(\log x)^{1+1/(6d)}} (\log\log x)^{1+1/(3d)}.
\]
Now suppose that GRH holds.  Theorem~3.3(ii) of \cite{large-sieve} implies that
\[
|\calV(x)| \ll \big(\frac{x}{\log x} + \max_{D'\in\mathcal{Z}(Q)} |H_{D'}| \cdot \sum_{D\in\mathcal{Z}(Q)} |H_D^\sharp| |H_D|\cdot x^{1/2}\log x \big) L(Q)^{-1}.
\]
Using (\ref{E:H bounds}), $L(Q)\gg Q/\log Q$ and $|\mathcal{Z}(Q)|\leq Q$, we obtain the bound
\[
|\calV(x)| \ll \Big(\frac{x}{\log x} + Q^d \cdot |\mathcal{Z}(Q)| Q^r Q^d x^{1/2}\log x \Big) (\log Q)/Q \leq \Big(\frac{x}{\log x} + Q^{f}x^{1/2}\log x \Big) (\log Q)/Q
\]
where we have set $f:=2d+r+1$.  Setting $Q$ equal to $(x^{1/2}/(\log x)^2)^{1/f}$, we deduce that
\[
|\calV(x)|\ll \frac{x}{\log x} (\log Q)/Q \ll x/Q = x^{1-\frac{1}{2f}} (\log x)^{2/f}.
\]
Note that Theorem~3.3 of \cite{large-sieve} required our assumption that the representations $\{\rho_{A,\ell}\}_\ell$ are independent.
\end{proof}

We finally begin the proof of Theorem~\ref{T:main effective}.   Fix a maximal torus $\T$ of $\G_A$.   In \S\ref{SS:Galois action}, we defined a homomorphism $\psi_v\colon \Gal_\QQ \to \Aut(X(\T))$ for every place $v\in \calS_A$.   The following is an effective version of Proposition~\ref{P:sieving}.

\begin{prop}  \label{P:sieving effective}
Fix a finite extension $L$ of $k_{\G_A}$.   Let $\scrS$ be the set of places $v\in \calS_A$ for which $\psi_v(\Gal_L) \neq W(\G_A,\T)$.  Then
\[
|\scrS(x)| \ll \frac{x(\log\log x)^{1+1/(3d)}}{(\log x)^{1+1/(6d)}}.
\]
\item If GRH holds, then
\[
|\scrS(x)| \ll x^{1-\frac{1}{4d+2r+2}} (\log x)^{\frac{2}{2d+r+1}}.
\]
\end{prop}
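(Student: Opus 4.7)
The plan is to mirror the structure of the non-effective Proposition~\ref{P:sieving} but to replace its closing Chebotarev density computation with the large sieve estimate of Proposition~\ref{P:large sieve}. By Lemma~\ref{L:in the Weyl group} we always have $\psi_v(\Gal_L)\subseteq W(\G_A,\T)$ for $v\in\calS_A$, and by Lemma~\ref{L:new Jordan} the failure $\psi_v(\Gal_L)\neq W(\G_A,\T)$ forces the existence of some $\Gamma$-conjugacy class $C\subseteq W(\G_A,\T)$ that avoids $\psi_v(\Gal_L)$. Since there are only finitely many such classes, it suffices to prove the two displayed bounds for each set
\[
\scrS_C := \{v\in \calS_A : \psi_v(\Gal_L)\cap C = \emptyset\}
\]
separately and then sum over $C$.

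Fix such a class $C$ and enlarge $L$, if necessary, so that $\T_L$ is split. I would take $\Lambda$ to be the set of rational primes that split completely in $L$ (so that $\T_{\QQ_\ell}$ is split for every $\ell\in\Lambda$) and for which the Zariski closure $\mathscr{T}_{\ZZ_\ell}$ of $\T$ in $\GL_{H_1(A(\CC),\ZZ_\ell)}$ is a split maximal torus of the reductive group scheme $\calG_{A,\ell}$. Exactly as in the proof of Proposition~\ref{P:sieving}, the Mumford-Tate conjecture for $A$ combined with Proposition~\ref{P:image of Galois mod ell}(i) rules out only finitely many exceptions, so $\Lambda$ keeps its natural positive density $1/[L:\QQ]$. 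For each $\ell\in\Lambda$ I feed the class $C$ into Lemma~\ref{L:sieving input} to produce a conjugation-stable subset $U_\ell\subseteq \bbar\rho_{A,\ell}(\Gal_K)$ of relative size $|C|/|W(\G_A,\T)| + O(1/\ell)$ with the property that $\bbar\rho_{A,\ell}(\Frob_v)\subseteq U_\ell$ (for $v\nmid \ell$) implies $\psi_{v,\ell}$ is unramified at $\ell$ and $\psi_{v,\ell}(\Frob_\ell)\subseteq C$.

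The key observation is then that $\scrS_C$ is contained in the ``missed'' set $\calV$ of Proposition~\ref{P:large sieve} attached to the data $\bigl(\Lambda,\{U_\ell\}_{\ell\in\Lambda},\,\delta=|C|/|W(\G_A,\T)|\bigr)$. Indeed, if some $v\in\scrS_C$ had $\bbar\rho_{A,\ell}(\Frob_v)\subseteq U_\ell$ for some $\ell\in\Lambda$ with $v\nmid\ell$, then Lemma~\ref{L:Weyl class of psi} together with the $\Gamma$-stability of $C$ would force $\psi_v(\Frob_\ell)\subseteq C$; but $\ell$ splits completely in $L$, so a Frobenius at $\ell$ can be chosen inside $\Gal_L$, contradicting $\psi_v(\Gal_L)\cap C=\emptyset$. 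Applying Proposition~\ref{P:large sieve}(a) and (b) to $\calV$ therefore yields the two claimed bounds on $|\scrS_C(x)|$, and summing over the finitely many conjugacy classes completes the argument.

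The main obstacle is purely bookkeeping: verifying that the hypotheses of Proposition~\ref{P:large sieve} are genuinely met — that $\calG_{A,\ell}$ is a \emph{split} reductive group scheme with $\mathscr{T}_{\ZZ_\ell}$ as a split maximal torus for cofinitely many $\ell\in\Lambda$, that the density $\delta$ is a positive constant independent of $\ell$ up to the $O(1/\ell)$ error, and that the independence hypothesis needed for the large sieve is in force (it is, since Theorem~\ref{T:main effective} assumes it). No genuinely new analytic estimate is needed beyond what is already packaged in Lemma~\ref{L:sieving input} and Proposition~\ref{P:large sieve}.
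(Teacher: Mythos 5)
Your proposal is correct and matches the paper's own proof essentially verbatim: the paper explicitly says the argument is that of Proposition~\ref{P:sieving} with the closing Chebotarev density computation replaced by an appeal to Proposition~\ref{P:large sieve}, using the same choice of $\Lambda$, the same sets $U_\ell$ from Lemma~\ref{L:sieving input}, and Lemma~\ref{L:new Jordan} to reduce to finitely many $\Gamma$-conjugacy classes $C$. The bookkeeping observations you flag (split maximal torus $\mathscr{T}_{\ZZ_\ell}$ for cofinitely many $\ell\in\Lambda$, positive density of $\Lambda$, independence hypothesis from the theorem) are exactly the ``extra remarks'' the paper alludes to.
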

\begin{proof}
The proof is the same as that of Proposition~\ref{P:sieving} with a few extra remarks.  After replacing $L$ by a finite extension, we may assume that $\T_L$ is split.  In the proof of Proposition~\ref{P:sieving} we chose a certain set of primes $\Lambda$ with positive density such that $\calG_{A,\ell}/\ZZ_\ell$ is a split reductive group scheme for all $\ell\in \Lambda$.  For a fixed non-empty subset $C$ of $W(\G_A,\T)$ which is stable under conjugation by $\Gamma$, we defined the set $\calV$ consisting of those places $v\in \calS_A$ for which $\bbar\rho_{A,\ell}(\Frob_v)\not\subseteq U_\ell$ for all $\ell\in \Lambda$ that satisfy $v\nmid \ell$; where the sets $U_\ell$ of $\bbar\rho_{A,\ell}(\Gal_K)$ are stable under conjugation and satisfy $|U_\ell|/|\bbar\rho_{A,\ell}(\Gal_K)|=|C|/|W(\G_A,\T)|+O(1/\ell)$.  We can then apply Proposition~\ref{P:large sieve} to bound $|\calV(x)|$.   The proposition then follows in the same manner as before.
\end{proof}

The proof of Theorem~\ref{T:main effective} is now identical to \S\ref{S:proof main} where we make use of Lemma~\ref{L:calSA effective} and Proposition~\ref{P:sieving effective} instead of using that certain sets have density 0.


\begin{bibdiv}
\begin{biblist}

\bib{MR2496739}{article}{
      author={Achter, Jeffrey~D.},
       title={Split reductions of simple abelian varieties},
        date={2009},
     journal={Math. Res. Lett.},
      volume={16},	
      number={2},
       pages={199\ndash 213},
}

\bib{Achter-effective}{unpublished}{
      author={Achter, Jeffrey~D.},
       title={Explicit bounds for split reductions of simple abelian varieties},
        date={2011},
        note={preprint},
}

\bib{MR2290584}{article}{
      author={Banaszak, Grzegorz},
      author={Gajda, Wojciech},
      author={Kraso{\'n}, Piotr},
       title={On the image of {$l$}-adic {G}alois representations for abelian
  varieties of type {I} and {II}},
        date={2006},
     journal={Doc. Math.},
      number={Extra Vol.},
       pages={35\ndash 75},
}

\bib{MR2663452}{article}{
      author={Banaszak, Grzegorz},
      author={Gajda, Wojciech},
      author={Kraso{\'n}, Piotr},
       title={On the image of {G}alois {$l$}-adic representations for abelian
  varieties of type {III}},
        date={2010},
     journal={Tohoku Math. J. (2)},
      volume={62},
      number={2},
       pages={163\ndash 189},
}

\bib{MR574307}{article}{
      author={Bogomolov, Fedor~Alekseivich},
       title={Sur l'alg\'ebricit\'e des repr\'esentations {$l$}-adiques},
        date={1980},
     journal={C. R. Acad. Sci. Paris S\'er. A-B},
      volume={290},
      number={15},
       pages={A701\ndash A703},
}

\bib{MR1102012}{book}{
      author={Borel, Armand},
       title={Linear algebraic groups},
     edition={Second},
      series={Graduate Texts in Mathematics},
   publisher={Springer-Verlag},
     address={New York},
        date={1991},
      volume={126},
}

\bib{MR794307}{book}{
      author={Carter, Roger~W.},
       title={Finite groups of {L}ie type},
      series={Pure and Applied Mathematics (New York)},
   publisher={John Wiley \& Sons Inc.},
     address={New York},
        date={1985},
        note={Conjugacy classes and complex characters, A Wiley-Interscience
  Publication},
}

\bib{MR1440067}{article}{
      author={Chavdarov, Nick},
       title={The generic irreducibility of the numerator of the zeta function
  in a family of curves with large monodromy},
        date={1997},
     journal={Duke Math. J.},
      volume={87},
      number={1},
       pages={151\ndash 180},
}

\bib{SGA3}{book}{
      author={Demazure, Michel},
      author={Grothendieck, Alexandre},
       title={{S}{\'e}minaire de {G}{\'e}om{\'e}trie {A}lg{\'e}brique du {B}ois
  {M}arie - 1962--64 - {S}ch{\'e}mas en groupes ({SGA} 3)},
      series={Lecture Notes in Mathematics \textbf{151}, \textbf{152},
  \textbf{153}},
   publisher={Springer-Verlag},
     address={New York},
        date={1970},
}

\bib{MR654325}{book}{
      author={Deligne, Pierre},
      author={Milne, James~S.},
      author={Ogus, Arthur},
      author={Shih, Kuang-yen},
       title={Hodge cycles, motives, and {S}himura varieties},
      series={Lecture Notes in Mathematics},
   publisher={Springer-Verlag},
     address={Berlin},
        date={1982},
      volume={900},
}

\bib{MR0125162}{article}{
      author={Ernest, John~A.},
       title={Central intertwining numbers for representations of finite
  groups},
        date={1961},
     journal={Trans. Amer. Math. Soc.},
      volume={99},
       pages={499\ndash 508},
}

\bib{MR861971}{incollection}{
      author={Faltings, Gerd},
       title={Finiteness theorems for abelian varieties over number fields},
        date={1986},
   booktitle={Arithmetic geometry ({S}torrs, {C}onn., 1984)},
   publisher={Springer},
     address={New York},
       pages={9\ndash 27},
        note={Translated from the German original [Invent. Math. {{\bf{7}}3}
  (1983), no. 3, 349--366; ibid. {{\bf{7}}5} (1984), no. 2, 381]},
}

\bib{JKV-splitting}{article}{
      author={Jouve, Florent},
      author={Kowalski, Emmanuel},
      author={Zywina, David},
       title={Splitting fields of characteristic polynomials of random elements
  in arithmetic groups},
        date={2011},
        note={arXiv:1008.3662 (to appear, Israel J. Math.)},
}

\bib{MR1370110}{article}{
      author={Larsen, Michael},
       title={Maximality of {G}alois actions for compatible systems},
        date={1995},
     journal={Duke Math. J.},
      volume={80},
      number={3},
       pages={601\ndash 630},
}

\bib{MR1339927}{article}{
      author={Larsen, Michael},
      author={Pink, Richard},
       title={Abelian varieties, {$l$}-adic representations, and
  {$l$}-independence},
        date={1995},
     journal={Math. Ann.},
      volume={302},
      number={3},
       pages={561\ndash 579},
}

\bib{MR1441234}{article}{
      author={Larsen, Michael},
      author={Pink, Richard},
       title={A connectedness criterion for {$l$}-adic {G}alois
  representations},
        date={1997},
     journal={Israel J. Math.},
      volume={97},
       pages={1\ndash 10},
}

\bib{MR1489911}{article}{
      author={Liebeck, Martin~W.},
      author={Pyber, L{\'a}szl{\'o}},
       title={Upper bounds for the number of conjugacy classes of a finite
  group},
        date={1997},
     journal={J. Algebra},
      volume={198},
      number={2},
       pages={538\ndash 562},
}

\bib{MR861974}{incollection}{
      author={Milne, J.~S.},
       title={Abelian varieties},
        date={1986},
   booktitle={Arithmetic geometry ({S}torrs, {C}onn., 1984)},
   publisher={Springer},
     address={New York},
       pages={103\ndash 150},
}

\bib{MR0248146}{article}{
   author={Mumford, D.},
   title={A note of Shimura's paper ``Discontinuous groups and abelian
   varieties''},
   journal={Math. Ann.},
   volume={181},
   date={1969},
   pages={345--351},
}

\bib{MR2426750}{article}{
      author={Murty, V.~Kumar},
      author={Patankar, Vijay~M.},
       title={Splitting of abelian varieties},
        date={2008},
     journal={Int. Math. Res. Not. IMRN},
      number={12},
}

\bib{MR1739726}{article}{
   author={Noot, Rutger},
   title={Abelian varieties with $l$-adic Galois representation of Mumford's
   type},
   journal={J. Reine Angew. Math.},
   volume={519},
   date={2000},
   pages={155--169},
}

\bib{MR2472133}{article}{
      author={Noot, Rutger},
       title={Classe de conjugaison du {F}robenius d'une vari\'et\'e
  ab\'elienne sur un corps de nombres},
        date={2009},
     journal={J. Lond. Math. Soc. (2)},
      volume={79},
      number={1},
       pages={53\ndash 71},
}

\bib{MR1603865}{article}{
      author={Pink, Richard},
       title={{$l$}-adic algebraic monodromy groups, cocharacters, and the
  {M}umford-{T}ate conjecture},
        date={1998},
     journal={J. Reine Angew. Math.},
      volume={495},
       pages={187\ndash 237},
}

\bib{MR0457455}{article}{
      author={Ribet, Kenneth~A.},
       title={Galois action on division points of {A}belian varieties with real
  multiplications},
        date={1976},
     journal={Amer. J. Math.},
      volume={98},
      number={3},
       pages={751\ndash 804},
}

\bib{MR0476753}{incollection}{
      author={Serre, Jean-Pierre},
       title={Repr\'esentations {$l$}-adiques},
        date={1977},
   booktitle={Algebraic number theory ({K}yoto {I}nternat. {S}ympos., {R}es.
  {I}nst. {M}ath. {S}ci., {U}niv. {K}yoto, {K}yoto, 1976)},
   publisher={Japan Soc. Promotion Sci.},
     address={Tokyo},
       pages={177\ndash 193},
}

\bib{MR563476}{incollection}{
      author={Serre, Jean-Pierre},
       title={Groupes alg\'ebriques associ\'es aux modules de {H}odge-{T}ate},
        date={1979},
   booktitle={Journ\'ees de {G}\'eom\'etrie {A}lg\'ebrique de {R}ennes.
  ({R}ennes, 1978), {V}ol. {III}},
      series={Ast\'erisque},
      volume={65},
   publisher={Soc. Math. France},
     address={Paris},
       pages={155\ndash 188},
}

\bib{MR644559}{article}{
      author={Serre, Jean-Pierre},
       title={Quelques applications du th\'eor\`eme de densit\'e de
  {C}hebotarev},
        date={1981},
     journal={Inst. Hautes \'Etudes Sci. Publ. Math.},
      number={54},
       pages={323\ndash 401},
}

\bib{MR1265537}{incollection}{
      author={Serre, Jean-Pierre},
       title={Propri\'et\'es conjecturales des groupes de {G}alois motiviques
  et des repr\'esentations {$l$}-adiques},
        date={1994},
   booktitle={Motives ({S}eattle, {WA}, 1991)},
      series={Proc. Sympos. Pure Math.},
      volume={55},
   publisher={Amer. Math. Soc.},
     address={Providence, RI},
       pages={377\ndash 400},
}

\bib{MR1730973}{book}{
      author={Serre, Jean-Pierre},
       title={{\OE}uvres. {C}ollected papers. {IV}},
   publisher={Springer-Verlag},
     address={Berlin},
        date={2000},
        note={1985--1998},
}

\bib{MR0222048}{article}{
      author={Shimura, Goro},
       title={Algebraic number fields and symplectic discontinuous groups},
        date={1967},
     journal={Ann. of Math. (2)},
      volume={86},
       pages={503\ndash 592},
}

\bib{MR0466335}{book}{
      author={Steinberg, Robert},
       title={Lectures on {C}hevalley groups},
   publisher={Yale University, New Haven, Conn.},
        date={1968},
        note={Notes prepared by John Faulkner and Robert Wilson},
}

\bib{MR2400251}{article}{
      author={Vasiu, Adrian},
       title={Some cases of the {M}umford-{T}ate conjecture and {S}himura
  varieties},
        date={2008},
     journal={Indiana Univ. Math. J.},
      volume={57},
      number={1},
       pages={1\ndash 75},
}

\bib{MR0265369}{article}{
      author={Waterhouse, William~C.},
       title={Abelian varieties over finite fields},
        date={1969},
     journal={Ann. Sci. \'Ecole Norm. Sup. (4)},
      volume={2},
       pages={521\ndash 560},
}

\bib{MR1944805}{article}{
      author={Wintenberger, J.-P.},
       title={D\'emonstration d'une conjecture de {L}ang dans des cas
  particuliers},
        date={2002},
     journal={J. Reine Angew. Math.},
      volume={553},
       pages={1\ndash 16},
}

\bib{MR0314847}{incollection}{
      author={Waterhouse, W.~C.},
      author={Milne, J.~S.},
       title={Abelian varieties over finite fields},
        date={1971},
   booktitle={1969 {N}umber {T}heory {I}nstitute ({P}roc. {S}ympos. {P}ure
  {M}ath., {V}ol. {XX}, {S}tate {U}niv. {N}ew {Y}ork, {S}tony {B}rook,
  {N}.{Y}., 1969)},
   publisher={Amer. Math. Soc.},
     address={Providence, R.I.},
       pages={53\ndash 64},
}

\bib{large-sieve}{article}{
      author={Zywina, David},
      title={The large sieve and Galois representations},
      date={2008},
      note={arXiv:0812.2222},
}

\end{biblist}
\end{bibdiv}
\end{document}